\newcommand\new[1]{}
\newcommand\nn{\nonumber}
\theoremstyle{plain}
\newtheorem{theorem}{Theorem}[section]
\theoremstyle{remark}
\newtheorem{remark}[theorem]{Remark}
\newtheorem{example}[theorem]{Example}
\theoremstyle{plain}
\newtheorem{corollary}[theorem]{Corollary}
\newtheorem{lemma}[theorem]{Lemma}
\newtheorem{proposition}[theorem]{Proposition}
\newtheorem{definition}[theorem]{Definition}
\numberwithin{equation}{section}
\newcommand{\lb}{\langle}
\newcommand{\rb}{\rangle}
\newcommand{\embed}{\hookrightarrow}
\newcommand\lin{\operatorname{lin}}
\newcommand\diver{\operatorname{div}}
\newcommand\id{\operatorname{I}}
\newcommand{\eps}{\varepsilon}
\newcommand{\al}{\alpha}
\newcommand{\la}{\lambda}
\def\Rnu{\mathbb{R}}
\def\Rn{\mathbb{R}^n}
\def\liml{\lim\limits}
\def\liminfl{\liminf\limits}
\def\suml{\sum\limits}
\def\supl{\sup\limits}
\def\intl{\int\limits}
\def\supl{\mathop{\sup}\limits}
\begin{document}
\title[Backward Uniqueness for  parabolic SPDEs]{Backward Uniqueness and the existence of the spectral limit for some parabolic SPDEs}

\author[Z. Brze\'{z}niak and  M. Neklyudov]{Zdzis{\l}aw Brze\'{z}niak and  Misha Neklyudov}
\address{Department of Mathematics\\
The University of York\\
Heslington, York YO10 5DD, UK} \email{zb500@york.ac.uk}
\address{Department of Mathematics\\
The University of York\\
Heslington, York YO10 5DD, UK}
\email{mn505@york.ac.uk,misha.neklyudov@gmail.com}

\date{\today}

\begin{abstract}
The aim of this article is to study the asymptotic behaviour for
large times of solutions to a certain class of stochastic partial
differential equations of parabolic type. In particular, we will
prove the backward uniqueness result and the existence of the
spectral limit for abstract SPDEs and then show how these results
can be applied to  some concrete linear and nonlinear SPDEs. For
example,  we will consider linear parabolic SPDEs with gradient
noise and stochastic NSEs with multiplicative noise. Our results
generalize the results proved in \cite{[Ghidaglia-1986]} for
deterministic PDEs.

\end{abstract}

\maketitle \tableofcontents

\thanks{}

\section{Introduction and formulation of  the main results}
\label{sec:into}

The question of uniqueness of solutions to both deterministic and
stochastic, both ordinary and partial, differential equations, is
quite fairly well understood. There are plenty of positive results
and there are some counterexample. This area is too vast to make
any attempt of listed relevant papers. The question of backward
uniqueness is equivalent to classical (i.e. forward)  uniqueness
in the case of ordinary differential equations. In the case of
stochastic differential equations the backward uniqueness is
closely related to the question of existence of a stochastic flow.
In fact, the latter implies the former and the latter has been
extensively studied since the pioneering works by Blagovescenski
and  Freidlin \cite{Bl_Freidlin_1961}. However, parabolic
equations can only be solved forward and the backward uniqueness
is completely unrelated to the forward uniqueness. To our
knowledge, first results on backward uniqueness are due to Lees
and Protter \cite{Lees+Protter_1961} and Mizohata
\cite{Mizohata_1958}. This has been followed by a long series of
papers, often using very different approaches, see e.g. Ghidaglia
\cite{[Ghidaglia-1986]} and Escauriaza et al \cite{Esc+S+Sv_2003}.
Primary applications of backward uniqueness  is the study of long
time behaviour of the solutions but there are also natural
applications to control theory, see e.g. \cite{Micu_Zuazua_2001}.
As in the case of PDEs also in the case of stochastic PDEs,   the
existence of a flow does not implies backward uniqueness.
Furthermore, there are only few known examples of SPDEs which have
flows, see e.g. \cite{Flandoli_Sch_1990},  \cite{Brz_Fl_1991},
\cite{Moh+Zh+Zh_2008} and references therein. Hence the question
of backward uniqueness for  SPDEs  of parabolic type is even more
important that the similar question for deterministic parabolic
PDEs. Possible applications are paramount, let us just mention the
most obvious: long time behaviour of solutions and control theory.
The backward uniqueness we prove should be applicable to study
regularity of the local spectral manifolds constructed in
Flandoli-Schauml{\"o}ffell, c.f. Ruelle \cite{Ruelle_1982},
Foias-Saut \cite{Foias+Saut_1984_s} and \cite{Brz_1991}.

To the best of our knowledge our paper is the first one in which
such an important question is being investigated. As mentioned in
the abstract, in our paper we   generalize the results proved in
\cite{[Ghidaglia-1986]} for deterministic PDEs. One of the
difficulties with   extending   the results from
\cite{[Ghidaglia-1986]} to the stochastic case is  that the
standard It\^o  formula is not directly applicable to the case
considered in this article. We use certain approximations to
overcome this problem.

Let us now briefly present construction of the paper. At the end
of this Introduction we will present notation, assumptions  and
the main results. In section $2$ we state the proof of the Theorem
about backward uniqueness of SPDEs. The argument is based on
stochastic version of logarithmic convexity approach. The proof is
separated in the series of Lemmas for convenience of the reader.
Proof of the Lemma \ref{lem:AuxRes_2} can be omitted in the first
reading. Section $3$ contains proof of the Theorem about existence
of spectral limit. The main difference of the proof comparing to
backward uniqueness Theorem is that we use comparison Theorem for
one dimensional diffusions to derive main a priori estimate. In
section $4$ we present examples of application of our result. It
includes quite wide class of linear SPDEs and one example of
application to 2D NSE with multiplicative noise of special form.
In appendix we collect some auxiliary results applied in the
proofs.

Assume that $(\Omega,\mathcal{F},\{\mathcal{F}_t\}_{t\geq
0},\mathbb{P})$ is a  complete filtered probability space and
$(w_t)_{t\geq 0}$ is an $\mathbb{R}^n$-valued $\mathbb{F}$-Wiener
process, where $\mathbb{F}=\{\mathcal{F}_t\}_{t\geq 0}$.  We
assume that
\begin{equation}\label{eqn-Gelfand}
V\subset H\cong H^\prime\subset V^\prime
\end{equation}
is a Gelfand triple of Hilbert spaces. The norm in $V$,
respectively $H$, will be denoted by $\Vert\cdot \Vert$,
respectively   by $\vert\cdot \vert$. The scalar product in $H$
(resp. $V$) will be denoted by $(\cdot,\cdot)_H$ (resp.
$(\cdot,\cdot)_V$) and the duality pairing between $V'$ and $V$
will be denoted by $<\cdot,\cdot>_{V',V}$. We will omit the
indexes where no uncertainty appears. The Banach space of trace
class operators in $H$ will be denoted $\mathcal{T}_1(H)$.

We assume that $A(t)$, $t\in [0,\infty)$ is a family of bounded
linear operators from $V$ to $V^\prime$ such that the sets
$D(A(t))=:D(A)$, $t\in [0,\infty)$ are independent of time and
$(B_k(t))_{k=1}^{n}$, $t\in [0,\infty)$ is a family of  bounded
linear operators both from $V$ to $H$ and from $H$ to $V^\prime$.
Let us set
\begin{equation}\label{eqn-A_tilde}
\tilde{A}(t)=A(t)-\frac{1}{2}\suml_kB_k(t)^*B_k(t), t\in[0,T].
\end{equation}
 We will
assume that the sets  $D(\tilde{A}(t))=:D(\hat{A})$, $t\in
[0,\infty)$ are independent of time. Here $\hat{A}:D(\hat{A})\to
H$ is selfadjoint strictly positive operator defined by
$$(\hat{A}u,v)_H:=(u,v)_V,u,v\in V.$$ Then, see
\cite{[LionsMagenes-72]}, p. 9-10, $V=D(\hat{A}^{1/2})$ and there
exists an orthonormal basis $\{e_i\}_{i\geq 1}\subset D(\hat{A})$
of eigenvectors of $\hat{A}$ in $H$.
For $N\in\mathbb{N}$ let  $P_N:H\to H$ be the orthogonal
projection onto the space $H_N=\lin\{e_1,\cdots,e_N\}$ and let
$Q_N=\id-P_N$. We can notice that $P_N:V\mapsto V$ and
$||P_N||_{\mathcal{L}(V,V)}\leq 1$. Denote $P_N':V'\to V'$ adjoint
operator to $P_N$ w.r.t. duality between $V$ and $V'$. Define a
linear operator $\tilde{A}_N=P_N'\tilde{A}P_N$.

We assume that a map  $$F:[0,T]\times V \to V'$$ is such that it
maps $[0,T]\times D(A)$ to $H$.

If $X$ is a separable Hilbert space then by
$\mathcal{M}^p(0,T;X)$, $p\geq 1$,  we will understand   the space
of all progressively measurable stochastic processes  $\xi
:[0,T]\times\Omega\to X$, or rather their equivalence classes,
such that
$$
\mathbb{E}\intl_0^T|\xi(s)|_X^p\,ds<\infty.
$$
\begin{definition}\label{def:Solution}
A progressively measurable $H$-valued stochastic process $u(t)$,
$t\geq 0$ is a solution of the problem
\begin{equation}\label{eqn-1.1}
\left\{
\begin{aligned}
du(t)&+(A(t)u(t)+F(t,u(t)))\,dt+\suml_{k=1}^nB_k(t)u(t)\,dw^k(t)=0,\,t\geq
0\cr u(0)&=u_0
\end{aligned}
\right.
\end{equation}
if and only if for each $T>0$ $u\in \mathcal{M}^2(0,T;V)\cap
L^2(\Omega,C([0,T],H))$  and for any $t\in[0,\infty)$, the
equality
\begin{equation}\label{eqn-1.2}
u(t)=u_0-\intl_0^t(A(s)u(s)\,ds+F(s,u(s)))\,ds-\suml_{k=1}^n\intl_0^tB^k(s)u(s)\,\,
dw(s)
\end{equation}
is satisfied $\mathbb{P}$-a.s..
\end{definition}
The following assumptions   will be used throughout the paper.

\begin{trivlist}
\item[\textbf{(AC0)}] The maps $A:[0,T]\to
\mathcal{L}(V,V^\prime)$ and $B^k:[0,T]\to \mathcal{L}(V,H)$,
$k=1,\cdots,n$ are strongly measurable and bounded, i.e. for each
$v\in V$, the functions $[0,T]\ni t\mapsto A(t)v\in V^\prime$ and
$[0,T]\ni t\mapsto B^k(t)v\in H$, $k=1,\cdots,n$ are measurable
and bounded. \item[\textbf{(AC1)}] There exists a map
$\tilde{A}^\prime:[0,T]\to\mathcal{L}(V,V^\prime)$ such that for
all $\phi\in V$ and $\psi\in V^\prime$,   $\lb
\tilde{A}^\prime(\cdot)\phi,\psi\rb\in L^1(0,T)$ and, in the weak
sense,
\begin{eqnarray}
\frac{d}{dt}\lb \tilde{A}(\cdot)\phi,\psi\rb &=&\lb
\tilde{A}^\prime(\cdot)\phi,\psi\rb.\label{eqn:ATildeTimeDerivAss}
\end{eqnarray}
\item[\textbf{(AC2)}] There exists constants $\al>0$ and
$\lambda\in\Rnu$ such that for all $t\in [0,T]$,
\begin{eqnarray}
2\lb A(\cdot)u,u\rb+\lambda |u|^2\geq\al \Vert u\Vert
^2+\suml_{k=1}^n|B_k(\cdot)u|^2,\; u\in V.\label{eqn:CoercivAss}
\end{eqnarray}
\item[\textbf{(AC3)}] There exists a function $\phi\in
L^{\infty}(0,T)$
such that 
\begin{eqnarray}
\suml_{k=1}^n|\lb u,B_k(\cdot)u\rb|\leq \phi(\cdot)|u|^2,\;
\mbox{for all } t\in[0,T],\, u\in V.\label{eqn:B_kWeakBoundAss}
\end{eqnarray}
\item[\textbf{(AC4)}] There exist functions $K_1\in L^2(0,T)$ and
$K_2\in L^1(0,T)$ such that $K_2\geq 0$ and
\begin{eqnarray}\label{eqn:CWeakBoundAss}
C(t)=\suml_{k=1}^nB_k(t)^*[\tilde{A}(t),B_k(t)]\leq
K_1(t)\id+K_2(t)\tilde{A}(t),\; \mbox{for all } t\in [0,T],
\end{eqnarray}
where $\id$ is the identity operator.
\item[\textbf{(AC5)}] There exists constants $L_1,L_2>0$ such that
\begin{eqnarray}
\suml_k\Vert B_k(\cdot)x\Vert\leq L_1|A(\cdot)x|+L_2|x|,\; \mbox{
for all } t\in [0,T],\, x\in D(A).\label{eqn:B_strongboundAss}
\end{eqnarray}

\item[\textbf{(AC6)}] There exist constants  $\beta,\gamma>0$ such
that for every $t\in [0,T]$ we have
\begin{eqnarray}
|\lb A(\cdot)x,x\rb|\leq \beta\Vert x\Vert^2+\gamma|x|^2, x\in
V.\label{eqn:AStrongBoundAss}
\end{eqnarray}
\item[\textbf{(AC7)}] There exist functions $C_1^k\in
L^2(0,T),k=1,\ldots,n$ such that
\begin{equation}
|\lb \tilde{A}x,B_k(t)x\rb|\leq C_1^k(t)|\lb \tilde{A}x,x\rb|,
\mbox{ for $x\in V$ and a.a. }\,
t\in[0,T].\label{eqn:B_kFirstOrder}
\end{equation}
\end{trivlist}

\begin{theorem}\label{thm:BackwardUniqueness}
Let us assume  that maps $A:[0,T]\to \mathcal{L}(V,V^\prime)$ and
$B^k:[0,T]\to \mathcal{L}(V,H)$, $k=1,\ldots,n$  satisfy the
 assumptions \textbf{(AC0)}-\textbf{(AC6)}.

Assume that $u_0\in H$ and that a process $u$ satisfies  the
following  conditions.
 There
exist constants  $\delta_0>0$ and $\kappa>2+\frac{4}{\delta_0}$
such that
\begin{eqnarray}
u&\in& L^{2+\delta_0}(\Omega,C([0,T],V)),\label{eqn:SolutionAss-2}
\\
u &\in & \mathcal{M}^2(0,T;D(\hat{A})).\label{eqn:SolutionAss-1'}
\end{eqnarray}
There exists a progressively measurable process $n$ such that
\begin{eqnarray}
&&\mathbb{E}e^{\kappa
\intl_{0}^Tn^2(s)\,ds}<\infty,\label{eqn:NonlinearityAss-n}
\\
|F(t,u(t))| &\leq & n(t)\Vert u(t)\Vert, \;  \mbox{ for a.a. }\,
t\in[0,T].\label{eqn:NonlinearityAss}
\end{eqnarray}
%


%
Assume that $u$ is a solution of problem \eqref{eqn-1.1}.  If
$u(T)=0$ $\mathbb{P}$-a.s., then
 $u(t)=0$, $\mathbb{P}$-a.s. for all $t\in[0,T]$.
\end{theorem}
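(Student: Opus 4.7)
The plan is to extend the logarithmic convexity method of Ghidaglia \cite{[Ghidaglia-1986]} to the present stochastic setting. The central object would be the Rayleigh-type quotient
$\Lambda(t)=\langle\tilde A(t)u(t),u(t)\rangle/|u(t)|^2$,
defined on the random open set $\{t:u(t)\neq 0\}$. The guiding idea is that the dynamics of $\Lambda$ are dominated by a non-positive ``variance-like'' term $-2\bigl(|\tilde Au|^2|u|^2-\langle\tilde Au,u\rangle^2\bigr)/|u|^4\le 0$; this forces $\Lambda$ to stay controlled, hence (up to an It\^o correction and a martingale) $d\log|u|^2\sim -2\Lambda\,dt$, so that $|u|^2$ cannot decay faster than some integrable exponential. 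The backward uniqueness is the contrapositive: $|u(T)|=0$ would force $|u(t)|=0$ throughout $[0,T]$.

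I would first apply It\^o's formula to $|u(t)|^2$; the structural choice $\tilde A=A-\tfrac12\sum B_k^*B_k$ in \eqref{eqn-A_tilde} exactly cancels the quadratic-variation contribution and leaves
$d|u|^2=-2\langle\tilde Au,u\rangle\,dt-2\langle F,u\rangle\,dt-2\sum(B_ku,u)\,dw^k$.
Next I would derive an It\^o identity for $\langle\tilde A(t)u(t),u(t)\rangle$: Galerkin-project onto $H_N$, apply the finite-dimensional It\^o formula to $\langle\tilde A_N u,u\rangle$ with $\tilde A_N=P_N'\tilde A P_N$, and pass $N\to\infty$ using $u\in\mathcal M^2(0,T;D(\hat A))$, the weak time-derivative in (AC1), and the strong measurability/boundedness in (AC0). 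The resulting identity features precisely the commutator operator $C(t)=\sum B_k^*[\tilde A,B_k]$ of (AC4), and reads
$d\langle\tilde Au,u\rangle=\bigl[\langle\tilde A'u,u\rangle-2|\tilde Au|^2+\langle Cu,u\rangle-2\langle\tilde Au,F\rangle\bigr]\,dt-2\sum\langle\tilde Au,B_ku\rangle\,dw^k$.

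I would then combine these via It\^o's formula for the quotient $X/Y$ with $X=\langle\tilde Au,u\rangle$ and $Y=|u|^2$, run on the stopping time $\tau_\eta=\inf\{s:|u(s)|\le\eta\}$ to avoid $Y=0$. The favourable cancellation $-2(|\tilde Au|^2Y-X^2)/Y^2\le 0$ survives, and the remaining drift is dominated term by term: $\langle Cu,u\rangle/Y\le K_1+K_2\Lambda$ by (AC4); the It\^o cross-terms $\sum\langle\tilde Au,B_ku\rangle(B_ku,u)/Y^2$ and $\Lambda\sum(B_ku,u)^2/Y^2$ by (AC3) together with (AC7); and the $F$-contributions by $|F|\le n\|u\|$ combined with the consequence $\|u\|^2/|u|^2\le (2/\alpha)(\Lambda+\lambda/2)$ of the coercivity (AC2), via Young's inequality. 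The outcome is a stochastic differential inequality
$d\Lambda\le[\mu_1(t)+\mu_2(t)\Lambda]\,dt+dM_t$,
with $\mu_1,\mu_2$ integrable in $t$ almost surely and $M$ a continuous local martingale whose exponential moments are controlled by \eqref{eqn:NonlinearityAss-n} (this is precisely where the exponent $\kappa>2+4/\delta_0$ enters). A stochastic Gronwall argument combined with Doob's exponential inequality would then give $\mathbb E\exp\bigl(c\int_0^T\Lambda(s)\,ds\bigr)<\infty$. Inserting this into the It\^o formula for $\log|u(\cdot\wedge\tau_\eta)|^2$ and integrating from $t$ to $T\wedge\tau_\eta$ produces a finite a.s.\ lower bound for $\log|u(T\wedge\tau_\eta)|^2-\log|u(t)|^2$; sending $\eta\downarrow 0$ and using path continuity of $u$ in $H$ forces $u(T)=0\Rightarrow u(t)=0$.

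The main obstacle is Step 2: making rigorous the It\^o formula for $\langle\tilde Au,u\rangle$ under the minimal regularity $u\in\mathcal M^2(0,T;D(\hat A))\cap L^{2+\delta_0}(\Omega,C([0,T],V))$, justifying the Galerkin limit, and correctly identifying the commutator $C(t)$ from the quadratic-variation terms --- this is precisely where the structural assumptions (AC4)--(AC5) and (AC7) come into play. A secondary difficulty is the stopping-time argument at $\tau_\eta$: one must verify that the Gronwall control on $\Lambda$ is uniform as $\eta\downarrow 0$, which uses the $L^{2+\delta_0}(\Omega,C([0,T],V))$ path continuity of $u$ together with \eqref{eqn:NonlinearityAss-n} to dominate the martingale tails.
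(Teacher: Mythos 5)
Your overall strategy --- logarithmic convexity via the Rayleigh quotient $\Lambda=\langle\tilde Au,u\rangle/|u|^2$, a Galerkin-justified It\^o formula for $\langle\tilde Au,u\rangle$, the non-positive variance term, and a Gronwall argument --- is the same skeleton as the paper's proof, but there are two genuine gaps. First, you propose to control the It\^o cross-terms $\sum_k\langle\tilde Au,B_ku\rangle\langle u,B_ku\rangle/|u|^4$ ``by (AC3) together with (AC7)''; however \textbf{(AC7)} is \emph{not} a hypothesis of this theorem (it is assumed only in Theorems \ref{thm:NonlinearBackwardUniqueness} and \ref{thm:ExistenceSpectrLimit}), so as written your drift estimate rests on an unavailable assumption. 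The paper's key device, which your outline is missing, is a Girsanov-type change of measure: it introduces the exponential martingale $M_\eps$ driven by $-2\sum_k\langle u,B_ku\rangle/(|u|^2+\eps)$ and works with the product $M_\eps\widetilde{\widetilde{\Lambda^N_\eps}}$ (equivalently, under $\mathbb{Q}^\eps$ with $d\mathbb{Q}^\eps/d\mathbb{P}=M_\eps(T)$); the cross-variation $d\langle M_\eps,\widetilde{\widetilde{\Lambda^N_\eps}}\rangle$ then cancels precisely these cross-terms, so they never need to be estimated. (They could alternatively be absorbed by writing $\tilde Au=(\tilde A-\Lambda)u+\Lambda u$ and using \textbf{(AC2)}--\textbf{(AC3)} with Young's inequality against the variance term, but you do neither.)

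Second, your treatment of the martingale part of $d\Lambda$ is unsubstantiated. That martingale is $-2\sum_k\int\langle(\tilde A-\Lambda)u,B_ku\rangle/|u|^2\,dw^k$, whose quadratic variation is controlled neither by \eqref{eqn:NonlinearityAss-n} (which concerns $n$, not the $B_k$) nor linearly by $\Lambda$; asserting that its ``exponential moments are controlled by \eqref{eqn:NonlinearityAss-n}'' and concluding $\mathbb{E}\exp\bigl(c\int_0^T\Lambda\,ds\bigr)<\infty$ claims far more than the hypotheses deliver. The paper instead (i) keeps the regularization $|u|^2+\eps$ in the denominator so that all integrands are pathwise bounded by constants depending on $\eps$, (ii) proves that the relevant stochastic integral $L^\eps$ is a genuine zero-mean martingale by a Burkholder--H\"older argument with exponents $p_1=\kappa/(1+\delta)$, $p_3=(2+\delta_0)/(1+\delta)$, $p_4=2/(1+\delta)$ --- this is where $\kappa>2+4/\delta_0$ actually enters, not in an exponential-martingale estimate --- and (iii) closes with the moment bound $\sup_{\eps>0}\int_0^T\mathbb{E}_{\mathbb{Q}^\eps}\widetilde{\Lambda}^F_\eps\,ds<\infty$ followed by Jensen's inequality applied to $\mathbb{E}_{\mathbb{Q}^\eps}\log(|u(t)|^2+\eps)$, rather than a pathwise exponential bound. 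Your stopping-time endgame with $\tau_\eta$ and a pathwise lower bound on $\log|u|^2$ is essentially the paper's proof of Theorem \ref{thm:NonlinearBackwardUniqueness}, which indeed requires the extra hypotheses \textbf{(AC7)} and $K_1=0$; for Theorem \ref{thm:BackwardUniqueness} under \textbf{(AC0)}--\textbf{(AC6)} alone the measure-change and moment route is needed.
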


\begin{remark}\label{rem:rem-1} It is well known that under some appropriate assumptions on $F$
the problem \eqref{eqn-1.1} has a unique solution see e.g.
 Theorem 1.4, p.140 in  \cite{[Pardoux_1979]} for the case
 $F=0$.
\end{remark}
\begin{remark}
The Assumption \eqref{eqn:NonlinearityAss-n} is satisfied if, e.g.
$n\in L^2(0,T)$ is a deterministic function.
\end{remark}
The Theorem \ref{thm:BackwardUniqueness} is well adjusted for
linear parabolic SPDE or if nonlinearity has no more than "linear"
growth (see section \ref{sec:appl} for examples). Indeed, in this
case $n$ from the assumptions
\eqref{eqn:NonlinearityAss-n}--\eqref{eqn:NonlinearityAss} is
deterministic function. The following Theorem is more suited to
deal with parabolic SPDE with quadratic nonlinearity such as
stochastic NSE. This improvement is achieved at the "price" of
more stringent assumptions on the type of noise we consider.
\begin{theorem}\label{thm:NonlinearBackwardUniqueness}
Let us assume  that maps $A:[0,T]\to \mathcal{L}(V,V^\prime)$ and
$B^k:[0,T]\to \mathcal{L}(V,H)$, $k=1,\ldots,n$  satisfy the
 assumptions \textbf{(AC0)}-\textbf{(AC7)}. Furthermore, assumption \textbf{(AC4)} is satisfied with $K_1=0$.

Assume that $u_0\in H$ and that a process $u$ satisfies  the
following  conditions.
 There
exist a constant  $\delta_0>0$ such that
\begin{eqnarray}
u&\in& L^{2+\delta_0}(\Omega,C(0,T;V)),\label{eqn:NSolutionAss-2}
\\
u &\in & \mathcal{M}^2(0,T;D(\hat{A}))\label{eqn:NSolutionAss-1'}.
\end{eqnarray}
There exists a progressively measurable process $n$ such that
\begin{eqnarray}
&& \intl_{0}^Tn^2(s)\,ds<\infty\mbox{
a.a.},\label{eqn:NNonlinearityAss-n}
\\
|F(t,u(t))| &\leq & n(t)\Vert u(t)\Vert, \;  \mbox{ for a.a. }\,
t\in[0,T].\label{eqn:NNonlinearityAss}
\end{eqnarray}

Assume that $u$ is a solution of problem \eqref{eqn-1.1}.  If
$u(T)=0$ $\mathbb{P}$-a.s., then
 $u(t)=0$, $\mathbb{P}$-a.s. for all $t\in[0,T]$.
\end{theorem}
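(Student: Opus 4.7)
The plan is to adapt the logarithmic convexity scheme that underlies Theorem \ref{thm:BackwardUniqueness}, replacing the \emph{global} exponential-moment hypothesis on $n$ by a \emph{pathwise} localization that is made possible by the extra assumption \textbf{(AC7)} together with $K_1=0$ in \textbf{(AC4)}. First I would argue by contradiction, assuming $\mathbb{P}(\{\exists t_0<T : u(t_0)\neq 0\})>0$. Introduce the stopping times
\begin{equation*}
\tau_R := \inf\Bigl\{t\in[0,T] : \intl_0^t n^2(s)\,ds \geq R \Bigr\}\wedge T,
\end{equation*}
which by \eqref{eqn:NNonlinearityAss-n} satisfy $\tau_R\uparrow T$ $\mathbb{P}$-a.s.. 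It then suffices to work on the event $\{\tau_R=T\}$ for each fixed $R$, where the nonlinearity is effectively bounded in $L^2(0,T)$, and to reproduce a pathwise version of the Ghidaglia estimate.

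On this event I would follow the logarithmic convexity strategy: starting from the Galerkin approximations $u_N=P_Nu$ in order to legitimate the computation (the regularity \eqref{eqn:NSolutionAss-2}--\eqref{eqn:NSolutionAss-1'} only ensures $u\in\mathcal{M}^2(0,T;D(\hat A))$), apply It\^o's formula to $\log |u_N(t)|^2$ and to $\langle\tilde{A}(t)u_N(t),u_N(t)\rangle$, and pass to the limit $N\to\infty$ to obtain SDEs for $|u(t)|^2$, $\langle\tilde{A}(t)u(t),u(t)\rangle$, and the Dirichlet quotient
\begin{equation*}
\Lambda(t) := \frac{\langle\tilde{A}(t)u(t),u(t)\rangle}{|u(t)|^2}.
\end{equation*}
The new input is \textbf{(AC7)}, which controls each cross term $\langle\tilde{A}u,B_k u\rangle$ by $C_1^k(t)\,\langle\tilde{A}u,u\rangle$; combined with $K_1=0$ in \textbf{(AC4)}, this yields pathwise estimates of the form
\begin{equation*}
d\Lambda(t) \leq \bigl(a(t)\Lambda(t)+b(t)n^2(t)\bigr)\,dt + \suml_k \sigma_k(t)\,dw^k(t), \quad \sigma_k(t)^2\leq c\,C_1^k(t)^2\,\Lambda(t)^2,
\end{equation*}
with $a,b\in L^1(0,T)$ and $C_1^k\in L^2(0,T)$ deterministic.

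The third step is a pathwise comparison argument. Because the diffusion coefficient of $\Lambda$ is linear in $\Lambda$ with deterministic, $L^2$-in-time factors, and its drift depends on the random integrand only through $n^2$ which is $\mathbb{P}$-a.s.\ integrable, a one-dimensional comparison with an SDE of geometric-Brownian type (or an application of Gr\"onwall after absorbing the stochastic integral via an exponential transform $Z(t)=\Lambda(t)\exp\{-\int_0^t\sum_k\sigma_k(s)\,dw^k(s)\}$) shows that $\Lambda$ is almost surely bounded on $[0,\tau_R]$. Substituting this bound into the identity
\begin{equation*}
\log|u(t)|^2 = \log|u(t_0)|^2 - 2\intl_{t_0}^t \Lambda(s)\,ds + \mathcal{R}(t),
\end{equation*}
where the remainder $\mathcal{R}(t)$ is $\mathbb{P}$-a.s.\ bounded on $[0,\tau_R]$ thanks again to \textbf{(AC7)} and \eqref{eqn:NNonlinearityAss}, yields a strictly positive lower bound for $|u(T)|^2$ on $\{\tau_R=T\}\cap\{u(t_0)\neq 0\}$, contradicting $u(T)=0$. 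Letting $R\to\infty$ concludes the proof.

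The main obstacle I expect is the second step: making the It\^o expansion of $\Lambda$ rigorous by the Galerkin/approximation procedure of Lemma \ref{lem:AuxRes_2} (only the regularity of \eqref{eqn:NSolutionAss-2}--\eqref{eqn:NSolutionAss-1'} is available), and, in parallel, grouping the many terms arising from $d\langle \tilde{A}u,u\rangle$ so that \textbf{(AC7)} and $K_1=0$ really suffice to dominate \emph{both} the martingale part and the part involving $C(t)$ by $\Lambda$ itself, rather than by a higher power. Once the SDE for $\Lambda$ is cast in the correct form, the pathwise comparison dispenses with any exponential moment of $n$ and completes the backward uniqueness argument.
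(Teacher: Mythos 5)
Your proposal follows essentially the same route as the paper's proof: argue by contradiction, derive (via finite-dimensional approximations) an SDE-type inequality for the Dirichlet quotient $\widetilde{\Lambda}(t)=\langle\tilde{A}u,u\rangle/|u|^2$, use $K_1=0$ in \textbf{(AC4)} to keep the drift homogeneous in $\widetilde{\Lambda}$ and \textbf{(AC7)} to bound the relative diffusion coefficient by the deterministic functions $C_1^k\in L^2(0,T)$, then apply the one-dimensional comparison theorem (Ikeda--Watanabe) to obtain a pathwise bound on $\widetilde{\Lambda}$ that is fed into the logarithmic-convexity identity. One correction: the localization you actually need is not the stopping time $\tau_R$ built from $\int_0^t n^2(s)\,ds$ (the pathwise comparison only uses $\int_0^T n^2\,ds<\infty$ a.s., which is already the hypothesis), but rather $\tau_r=\inf\{t:|u(t)|\leq r\}$ together with the regularization $|u|^2+\eps$, since $\log|u|^2$ and $\widetilde{\Lambda}$ degenerate as $u\to 0$ and $u(T)=0$ by assumption; this is precisely how the paper localizes before letting $r\to 0$.
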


\begin{corollary}\label{cor:cor-1}
Under the assumptions of Theorem \ref{thm:BackwardUniqueness} or
Theorem \ref{thm:NonlinearBackwardUniqueness} either $u(t)=0$,
$\mathbb{P}$-a.s. for all $t\in[0,T]$ or $|u(t)|>0$,
$\mathbb{P}$-a.s. for all $t\in[0,T]$.
\end{corollary}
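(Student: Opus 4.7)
The plan is to split on the deterministic initial datum $u_0$. Note that $u\equiv 0$ is a solution of \eqref{eqn-1.1} from the zero initial datum (this is implicit in the backward-uniqueness framework, where the zero trajectory is the candidate being compared to $u$; consistency with \eqref{eqn:NonlinearityAss} or \eqref{eqn:NNonlinearityAss} forces $F(t,0)=0$ a.e.). If $u_0=0$, uniqueness of solutions to \eqref{eqn-1.1} (Remark \ref{rem:rem-1}) gives $u\equiv 0$ $\mathbb{P}$-a.s., which is the first alternative.

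Suppose instead $u_0\neq 0$ and introduce
\[
\tau=\inf\{t\in[0,T]:u(t)=0\}\wedge T,
\]
which is a well-defined stopping time since $u\in C([0,T],H)$ $\mathbb{P}$-a.s. Path-continuity forces $u(\tau)=0$ on $\{\tau\leq T\}$; restarting \eqref{eqn-1.1} at time $\tau$ with zero initial data and invoking forward uniqueness, one obtains $u\equiv 0$ on $[\tau,T]$ on $\{\tau\leq T\}$, so in particular $u(T)=0$ on that event. The second alternative of the corollary is thus equivalent to $\mathbb{P}(\tau\leq T)=0$.

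To prove $\mathbb{P}(\tau\leq T)=0$, I would assume the contrary and apply the argument of Theorem \ref{thm:BackwardUniqueness} (or \ref{thm:NonlinearBackwardUniqueness}) to $u$ on the random sub-interval $[0,\tau]$. The proof of each theorem proceeds through a pathwise logarithmic convexity estimate for $t\mapsto|u(t)|^2$---schematically, $|u(t)|^2\leq M(\omega,t)\,|u_0|^{2(1-\theta(t))}\,|u(T)|^{2\theta(t)}$ with $\theta(t)\in(0,1)$---which, evaluated on the stopped trajectory $t\mapsto u(t\wedge\tau)$, forces $u(t)=0$ for every $t\in[0,\tau]$ on $\{\tau\leq T\}$. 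Consequently $u_0=u(0)=0$ with positive probability, contradicting $u_0\neq 0$.

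The chief obstacle is precisely this last step: transferring the conclusion from the fixed deterministic interval $[0,T]$, where the theorems are stated, to the random interval $[0,\tau]$. This amounts to verifying that the pathwise logarithmic convexity estimate at the core of the proofs of Theorems \ref{thm:BackwardUniqueness} and \ref{thm:NonlinearBackwardUniqueness} is preserved after stopping at $\tau\leq T$, which should follow from the integrability hypotheses \eqref{eqn:SolutionAss-2}--\eqref{eqn:NonlinearityAss-n} (respectively \eqref{eqn:NSolutionAss-2}--\eqref{eqn:NNonlinearityAss-n}) being invariant under bounded stopping and from the pathwise (rather than merely expected) nature of the Lyapunov bookkeeping in the main argument.
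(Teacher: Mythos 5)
The paper states this corollary without proof, so your argument has to stand on its own; it has the right overall shape, but the step you yourself flag as ``the chief obstacle'' is a genuine gap, and the tool you propose to close it does not exist in the paper in the form you describe. The proof of Theorem \ref{thm:BackwardUniqueness} is \emph{not} a pathwise interpolation inequality of the form $|u(t)|^2\le M(\omega,t)\,|u_0|^{2(1-\theta)}|u(T)|^{2\theta}$; it is an estimate in expectation under the $\eps$-dependent equivalent measures $\mathbb{Q}^{\eps}$, culminating in \eqref{eqn:AuxEstimate-2'}. Such an $L^2(\mathbb{Q}^{\eps})$ lower bound only yields $\mathbb{P}(u(t)\neq 0)>0$, not $|u(t)|>0$ a.s., and it does not localize to your event $\{\tau\le T\}$: that event is not $\mathcal{F}_0$-measurable, so conditioning on it destroys the Wiener property on which the whole construction of $M_\eps$ and $\mathbb{Q}^{\eps}$ rests. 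Hence, under the hypotheses of Theorem \ref{thm:BackwardUniqueness}, your step 3 does not go through as written. A route that does work there is different in spirit: \eqref{eqn:Part1AuxInequality-1} together with Lemma \ref{lem_est} gives $\mathbb{E}_{\mathbb{Q}^{\eps}}\log(|u(t)|^2+\eps)\ge \mathbb{E}\log(|u(0)|^2+\eps)-K$ with $K$ uniform in $\eps$; if one had $\mathbb{P}(u(t)=0)>0$, the left-hand side would be bounded above by $\log(\eps)\,\mathbb{Q}^{\eps}(u(t)=0)$ plus a term controlled by $\eps+\mathbb{E}_{\mathbb{Q}^{\eps}}|u(t)|^2$, and (after checking that $\mathbb{Q}^{\eps}(u(t)=0)$ stays bounded away from $0$ as $\eps\to 0$) this tends to $-\infty$, contradicting the finite lower bound. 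In short, finiteness of the expected logarithm under an equivalent measure is what forces $|u(t)|>0$ a.s.; no stopping-time localization is needed. Under the hypotheses of Theorem \ref{thm:NonlinearBackwardUniqueness} the situation is the reverse: the paper's proof is already organized around exactly your $\tau$ (there called $\tau_0$) and delivers the pathwise bound \eqref{eqn:NAuxEstimate-3}, $|u(\tau_0)|^2\ge\frac12 e^{-K}|u(0)|^2$, so in that case your ``localization'' is not a new step to be verified but a re-reading of the existing argument.

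Separately, your uses of forward uniqueness are not covered by the hypotheses: Remark \ref{rem:rem-1} only records that uniqueness holds ``under some appropriate assumptions on $F$'' (citing the case $F=0$), whereas the theorems merely assume $u$ is \emph{a} solution. The use on $[\tau,T]$ is avoidable --- argue at the time $\tau$ itself, where $u(\tau)=0$ by path continuity, instead of pushing the zero forward to $T$ --- but the use in the case $u_0=0$ is not: there the corollary literally asserts forward uniqueness of the zero solution, which no backward estimate can supply. You should either assume forward uniqueness explicitly for that case or restrict the dichotomy to $u_0\neq 0$.
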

We will use in the following Theorem the same notation as in
Theorem \ref{thm:NonlinearBackwardUniqueness}.


\begin{theorem}\label{thm:ExistenceSpectrLimit}
Let us assume  that maps $A:[0,\infty)\to \mathcal{L}(V,V^\prime)$
and $B^k:[0,\infty)\to \mathcal{L}(V,H)$, $k=1,\ldots,n$  satisfy
the assumptions \textbf{(AC0)}-\textbf{(AC7)} on any finite
interval. Furthermore, we assume that the assumptions
\textbf{(AC1)}, \textbf{(AC3)}, \textbf{(AC4)} and \textbf{(AC7)}
are satisfied globally on $[0,\infty)$ and assumption
\textbf{(AC4)} is satisfied with parameter $K_1=0$.

Suppose that $u$ is a unique solution of problem \eqref{eqn-1.1}
with $u(0)\not=0$.  We also assume that
\begin{trivlist}
\item 
there exist a progressively measurable process $n$ and a
measurable set $\Omega^\prime\subset \Omega$,
$\mathbb{P}(\Omega^\prime)=1$ such that for all
$\omega\in\Omega^\prime$,  $n(\cdot,\omega)\in L^2(T_0,\infty)$
and
\begin{equation}
|F(t,u(t))|\leq n(t)\Vert u(t)\Vert,\; \mbox{ for a.a.
}t\in[T_0,\infty).\label{eqn:NonlinearityAss-2}
\end{equation}
\item Assume that
\begin{eqnarray}
&&u \in \mathcal{M}^2_{\rm loc}(0,\infty;D(\hat{A})),\label{eqn:SpLSolutionAss-1}\\
\forall T>0 &&\mathbb{E}\supl_{0\leq t\leq
T}||u(t)||^{2+\delta_0}<\infty.
\label{eqn:SpLSolutionAss-2}
\end{eqnarray}

\end{trivlist}
Then there exists a measurable map
$\tilde{\Lambda}^{\infty}:\Omega\to\sigma(\tilde{A})$ such that
$\mathbb{P}$-a.s..
$$
\liml_{t\to\infty}\frac{\lb \tilde{A}u(t),u(t)
\rb}{|u(t)|^2}=\tilde{\Lambda}^{\infty}.
$$
\end{theorem}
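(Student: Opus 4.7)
The plan is to follow Ghidaglia's deterministic strategy adapted to the stochastic setting. The central object is the stochastic Rayleigh quotient $\Lambda(t) := \langle \tilde{A} u(t), u(t)\rangle / |u(t)|^2$. First I would note that since $u(0)\neq 0$, Corollary \ref{cor:cor-1} applied on every finite subinterval of $[0,\infty)$ (its hypotheses are inherited from the local versions of (AC0)--(AC7) already assumed) yields $|u(t)|>0$ for all $t\geq 0$ almost surely, so $\Lambda(t)$ is well-defined on a set of full measure and takes values in a subset of $[\inf\sigma(\tilde A),\infty)$. The goal becomes showing that $\Lambda(t)$ converges a.s.\ as $t\to\infty$ to a random limit lying in $\sigma(\tilde{A})$.

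Next, I would derive a semimartingale decomposition of $\Lambda(t)$. Since $u$ is not regular enough to apply It\^o directly to $\langle \tilde{A}u, u\rangle$, I would first work with the Galerkin truncations $u_N=P_Nu$, compute $d|u_N|^2$ and $d\langle \tilde{A}_N u_N,u_N\rangle$ from the equation satisfied by $u_N$ (controlling the commutator error terms via (AC5), (AC6), (AC7)), apply It\^o to the quotient on $\{|u_N|>0\}$, and pass to the limit $N\to\infty$ using the integrability \eqref{eqn:SpLSolutionAss-1}--\eqref{eqn:SpLSolutionAss-2}. The expected form is
$$
d\Lambda(t)=-\frac{2}{|u(t)|^2}\bigl|(\tilde{A}-\Lambda(t))u(t)\bigr|^2\,dt+R(t)\,dt+dM(t),
$$
with $R$ and the quadratic variation of $M$ controlled by (AC3), (AC4) with $K_1=0$, (AC7), and \eqref{eqn:NonlinearityAss-2}; this parallels the It\^o computations underpinning Theorems \ref{thm:BackwardUniqueness} and \ref{thm:NonlinearBackwardUniqueness}.

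With this decomposition in hand, I would establish a.s.\ convergence of $\Lambda(t)$ in two stages. First, invoking the one-dimensional comparison theorem for SDEs, I would compare $\Lambda(t)$ with the solution of an auxiliary linear SDE whose drift dominates $R(t)$: because $K_1=0$ in (AC4), the relevant bound on $R$ is of the form $K_2(t)\Lambda(t)$ modulo integrable terms, so the comparison yields an a.s.\ upper bound for $\Lambda(t)$ on $[0,\infty)$. Combined with the lower bound from strict positivity of $\tilde{A}$, $\Lambda$ stays in a random bounded interval. A semimartingale/Lyapunov argument applied to the displayed identity then shows $\int_0^\infty|(\tilde{A}-\Lambda(t))u(t)|^2/|u(t)|^2\,dt<\infty$ a.s., and the a.s.\ finiteness of the quadratic variation of $M$ on $[0,\infty)$ (again from (AC3)--(AC4) with $K_1=0$, (AC7)) implies that $\Lambda(t)$ converges a.s.\ to some random $\tilde{\Lambda}^\infty$.

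Finally, I would identify the limit as an element of $\sigma(\tilde A)$. Setting $v(t):=u(t)/|u(t)|$, so $|v(t)|=1$, the finite integral above forces a (random) sequence $t_k\to\infty$ along which $(\tilde{A}-\Lambda(t_k))v(t_k)\to 0$ in $H$; since $\Lambda(t_k)\to\tilde{\Lambda}^\infty$, this produces a Weyl sequence for $\tilde{A}$ at $\tilde{\Lambda}^\infty$, hence $\tilde{\Lambda}^\infty\in\sigma(\tilde{A})$. Measurability of $\tilde{\Lambda}^\infty$ follows from that of $\Lambda(t)$. The main obstacle I anticipate is the second paragraph together with the comparison step: in the deterministic case $d\Lambda/dt$ is manifestly non-positive, whereas here the It\^o correction and martingale terms are a priori uncontrolled, and the precise role of $K_1=0$ in (AC4) together with (AC7) is to ensure that these perturbations are either absorbed into the dissipation term or dominated by the comparison diffusion.
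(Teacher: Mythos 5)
Your overall architecture (positivity of $|u(t)|$ via backward uniqueness, a Galerkin/It\^o decomposition of the Rayleigh quotient exhibiting the dissipation term $-2|(\tilde{A}-\Lambda)u|^2/|u|^2$, a.s.\ integrability of that dissipation, and the Weyl-sequence identification of the limit as a point of $\sigma(\tilde{A})$) matches the paper's Steps 1 and 2. The genuine gap is in the mechanism you propose for the a.s.\ convergence of $\Lambda(t)$ itself. The martingale part of $d\Lambda$ is $\sum_k\bigl(2\Lambda(t)\frac{\lb u,B_ku\rb}{|u|^2}-\frac{2\lb\tilde{A}u,B_ku\rb}{|u|^2}\bigr)\,dw^k$; the second summand is controlled on $[0,\infty)$ by \textbf{(AC7)} (with $C_1^k\in L^2(0,\infty)$), but the first is only bounded by $2\Lambda\phi$ with $\phi\in L^\infty(0,\infty)$ from \textbf{(AC3)}, so even after you establish boundedness of $\Lambda$ its quadratic variation on $[0,\infty)$ need not be finite, and ``finite bracket implies a.s.\ convergence'' does not apply. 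The paper's remedy is precisely to work with $M_\eps(t)\widetilde{\widetilde{\Lambda}}_\eps(t)$, where $M_\eps$ is the exponential martingale \eqref{eqn:ExpMart-1} from the backward-uniqueness proof: the product rule cancels the $\lb u,B_ku\rb/(|u|^2+\eps)$-driven noise exactly, leaving only the \textbf{(AC7)}-controlled term. Your proposal omits this multiplication, and without it the convergence step fails under the stated hypotheses.

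Moreover, even for the corrected process the paper does not deduce convergence from a quadratic-variation bound. It applies the comparison theorem for one-dimensional diffusions to obtain $S_\eps(t)\le S_\eps(\tau)\vartheta_\tau(t)$ with $\vartheta_\tau$ a uniformly integrable exponential martingale (uniform integrability coming from \textbf{(AC7)} holding globally), lets $t\to\infty$, and then invokes the cocycle Lemma \ref{lem:Cocycle_lem} to get $\vartheta_\tau(\infty)\to 1$ as $\tau\to\infty$, which squeezes $\limsup_{t\to\infty}S(t)\le\liminf_{\tau\to\infty}S(\tau)$ and hence yields existence of $\lim_{t\to\infty}S(t)$; convergence of $\Lambda$ is then recovered by showing separately that $M_0(t)$ converges to a nonzero limit and that the deterministic exponential prefactor converges. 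Your a priori a.s.\ bound for $\Lambda$ on all of $[0,\infty)$ is likewise not an input available before this comparison step but a by-product of it. To salvage your route you would need to (i) insert the $M_\eps$ factor (or an equivalent change of measure) before any quadratic-variation argument, and (ii) replace ``bounded semimartingale with convergent bracket'' by the $\limsup/\liminf$ squeeze, since boundedness alone does not give convergence.
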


\section{Proof of the Theorems \ref{thm:BackwardUniqueness} and \ref{thm:NonlinearBackwardUniqueness} on the  backward uniqueness for SPDEs}
\label{sec:proof_thm_backward}

\begin{proof}[Proof of Theorem \ref{thm:BackwardUniqueness}]
We will argue by contradiction. Suppose that the assertion of the
Theorem is not true. Then, because the process $u$ is adapted, we
will be able to find   $t_0\in[0,T)$, an event
$R\in\mathcal{F}_{t_0} $ and  a constant $c>0$ such that
$\mathbb{P}(R)>0$ and
\begin{equation}
|u(t_0,\omega)|\geq c>0,\; \omega\in R.\label{eqn:LowerBoundSol}
\end{equation}

Without loss of generality we can  assume  that $\mathbb{P}(R)=1$
and $t_0=0$. Otherwise, we can consider instead of measure
$\mathbb{P}$ the conditional measure
$\mathbb{P}_R:=\frac{\mathbb{P}(\cdot\cap R)}{\mathbb{P}(R)}$.


Suppose  that there exists a constant $c>0$ and  a probability
measure\footnote{Here and below by $\mathbb{E}_{\mathbb{Q}}$ we
will denote the mathematical expectation w.r.t. the measure
$\mathbb{Q}$.} $\mathbb{Q}$ equivalent to $\mathbb{P}$ such that
\del{Corrected.I see a problem here as $\tau$ is a random time.}
$$\mathbb{E}_{\mathbb{Q}}|u(t)|^2\geq c, \; \mbox{for all } t\in [0,T].$$
Then, by taking $t=T$, we infer that
$\mathbb{E}_{\mathbb{Q}}|u(T)|^2>0$ what is a clear contradiction
with the assumption that  $u(T)=0$,  $\mathbb{P}$-a.s..

Now we shall  prove that such a measure  exists.

For this let us fix $\delta\geq 0$, $k=1,\cdots,n$ and let us
define progressively measurable processes
$$\rho^\delta_k(s)=\frac{\lb
u(s),B_ku(s)\rb}{|u(s)|^2+\delta},\; s\in [0,T].$$ Because of the
assumption \eqref{eqn:B_kWeakBoundAss}, $\vert \rho^\delta_k
\vert\leq \phi$ and since $\phi\in L^\infty(0,T) \subset L^2(0,T)$
we infer that $\rho^\delta_k\in \mathcal{M}(0,T;\mathbb{R})$.
Therefore, a process
$$\sum_{k=1}^n\intl_{0}^{t}\rho^\delta_k(s)\, dw^k_s, \; t\in [0,T],$$ is square
integrable $\mathbb{R}$-valued martingale which satisfies
condition 5.7 from \cite[Theorem 5.3 p.142]{[Ikeda-1981]}. Hence
the process $M_\delta=(M_\delta(t))_{t\in [0,T]}$ defined by
\begin{equation}
M_\delta(t)=\exp(-2\suml_k\intl_{0}^t \rho^\delta_k(s) \, dw^k(s)-
2\intl_{0}^t\suml_k |\rho^\delta_k(s)|^2\,ds),\; t\in [0,T],
\label{eqn:ExpMart-1}
\end{equation}
satisfies $\mathbb{E}M_\delta(t)=1$ for all $t\geq 0$ and
\begin{equation}
dM_\delta(t)=-2M_\delta(t)\suml_k\frac{\lb
u,B_ku\rb}{|u|^2+\delta}\, dw^k(t), \; t\in [0,T].\label{eqn:M_t}
\end{equation}

Therefore, $M_\delta$ is a continuous square integrable 
martingale. The above allows us to define a probability
measure\footnote{Note that the measure $\mathbb{Q}^0$ is also well
defined by this formula.} $\mathbb{Q}^{\delta}$ by
$$\frac{d\mathbb{Q}^{\delta}}{d\mathbb{P}}=M_\delta(T).$$

Next let us fix $\eps>0$ and  define a process $\psi^{\eps}$ by
\begin{equation}
\psi^{\eps}(t)=-\frac{1}{2}M_\eps(t)\log{(|u(t)|^2+\eps)},\;
t\in[0,T].\label{eqn:Psi_Def_1}
\end{equation}
\del{When we write $dx(t)=f(t)\,dw(t)$, we can have two cases. 1.
$\mathbb{E}\int_0^T |f(t)|^2\, dt<\infty$ or $\int_0^T |f(t)|^2\,
dt<\infty$ with $\mathbb{P}=1$. Which case do we have here? First
one.}

Now we will prove the following result.
\begin{lemma}\label{lem-psi_ito}
The process  $\psi^{\eps}$ defined in \eqref{eqn:Psi_Def_1} is an
It\^o process.
\end{lemma}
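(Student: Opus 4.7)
The goal is to exhibit $\psi^\epsilon$ as a continuous real-valued semimartingale by producing its Itô decomposition explicitly. I would carry this out in three successive steps.

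First I would use the variational Itô formula (in the Krylov--Rozovskii / Pardoux sense) in the Gelfand triple $V\subset H\subset V'$ to obtain the semimartingale decomposition of the real-valued process $t\mapsto |u(t)|^2$:
\begin{equation*}
d|u(t)|^2 = \Bigl(-2\lb A(t)u(t)+F(t,u(t)),u(t)\rb + \suml_{k=1}^n|B_k(t)u(t)|^2\Bigr)dt - 2\suml_{k=1}^n\lb u(t),B_k(t)u(t)\rb\,dw^k(t).
\end{equation*}
Next, since $|u(t)|^2+\epsilon\geq \epsilon>0$ and $g(x)=\log x$ has bounded derivatives on $[\epsilon,\infty)$, the classical one-dimensional Itô formula applies to the continuous semimartingale $|u(\cdot)|^2+\epsilon$ and produces the decomposition of $\log(|u(t)|^2+\epsilon)$, whose $dw^k$-coefficient is exactly $-2\rho^{\epsilon}_k$ and whose quadratic-variation correction involves $\suml_k (\rho^{\epsilon}_k)^2$. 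Finally, I would apply the Itô product formula to the two real-valued continuous semimartingales $M_\epsilon$ and $\log(|u(\cdot)|^2+\epsilon)$, using the explicit differential \eqref{eqn:M_t}; the cross-variation contribution equals $4M_\epsilon\suml_k (\rho^{\epsilon}_k)^2\,dt$, and collecting everything delivers the claimed Itô decomposition of $\psi^\epsilon$.

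The integrability side-conditions required along the way are easily read off the standing hypotheses. By (AC3) we have $|\rho^{\epsilon}_k|\leq \phi\in L^\infty(0,T)$, which dominates both the diffusion coefficient and the quadratic-variation correction arising in Step~2; the integral $\intl_0^T |\lb A(t)u(t),u(t)\rb|\,dt$ is finite $\mathbb{P}$-a.s.\ by (AC0) together with $u\in \mathcal{M}^2(0,T;V)$; the term $\intl_0^T|\lb F(t,u(t)),u(t)\rb|\,dt$ is controlled via \eqref{eqn:NonlinearityAss}, \eqref{eqn:NonlinearityAss-n} and $u\in L^{2+\delta_0}(\Omega,C([0,T],V))$ by the Cauchy--Schwarz inequality; and $M_\epsilon$ is already a true square-integrable martingale, as pointed out just after \eqref{eqn:ExpMart-1}.

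The main obstacle I anticipate is the rigorous justification of the variational Itô formula in Step~1 in the presence of the nonlinearity $F$. If one prefers to avoid invoking a general Krylov--Rozovskii statement, the natural fallback is to apply the finite-dimensional Itô formula to $t\mapsto |P_Nu(t)|^2$ using the Galerkin projections $P_N$ already introduced before Definition \ref{def:Solution}, and then to pass to the limit $N\to\infty$ by means of (AC0), (AC2), the additional regularity \eqref{eqn:SolutionAss-1'} and dominated convergence. Steps~2 and 3 are then purely one-dimensional applications of the scalar Itô formula and the product rule, and raise no further technical difficulty beyond careful bookkeeping of the terms.
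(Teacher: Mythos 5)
Your proposal is correct and reaches the same decomposition as the paper, but it organizes the first (and only nontrivial) step differently. The paper applies Pardoux's infinite-dimensional It\^o formula (Theorem 1.2 of \cite{[Pardoux_1979]}) \emph{directly} to the composite function $R:H\ni x\mapsto \log(|x|^2+\eps)$, which requires computing $R'$ and $R''$ and verifying boundedness of the derivatives and continuity of $x\mapsto \mathrm{Tr}(Q\circ R''(x))$ for trace-class $Q$; it then concludes, exactly as you do, with the product rule for the two real semimartingales $M_\eps$ and $\log(|u|^2+\eps)$. You instead factor the first step through the energy identity for $|u(t)|^2$ (the most standard instance of the variational It\^o formula) and then compose with the scalar function $\log(\cdot+\eps)$, which has bounded derivatives on $[\eps,\infty)$; this trades the verification of Pardoux's hypotheses for the composite $R$ against a one-dimensional chain rule, and is arguably the more elementary bookkeeping. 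Your identification of the diffusion coefficient $-2\rho^\eps_k$, the quadratic-variation correction $-2\sum_k(\rho^\eps_k)^2$, and the cross-variation $4M_\eps\sum_k(\rho^\eps_k)^2\,dt$ all agree with the paper's \eqref{eqn:Aux-2}--\eqref{eqn:Aux-3} (note that the denominator $(|u|^2+\eps)^2$ in the drift term of the paper's \eqref{eqn:Aux-2} is evidently a typo for $|u|^2+\eps$, and your version is the correct one). The only caveat is that your fallback Galerkin argument for the energy identity is unnecessary here: since $u$ is by definition a solution in the variational sense with $u\in\mathcal{M}^2(0,T;V)\cap L^2(\Omega,C([0,T],H))$ and $F(\cdot,u(\cdot))$ is controlled by \eqref{eqn:NonlinearityAss}, the standard It\^o formula for the square of the $H$-norm in the Gelfand triple applies directly, which is exactly the role Pardoux's theorem plays in the paper.
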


\begin{proof}[Proof of Lemma \ref{lem-psi_ito}]
First,  by  invoking   Theorem 1.2 from Pardoux
\cite{[Pardoux_1979]}, we will show that $\log{(|u(t)|^2+\eps)}$
is an It\^o process. For this we need to check that the
assumptions of that result for the function $R:H\ni x\mapsto \log
(|x|^2+\eps)\in\mathbb{R}$ and the process $u$ are satisfied.
Obviously the function $R$ is of $C^2$-class and since
\begin{eqnarray*}
R^\prime(x)h& = &\frac{2\lb x,h\rb}{|x|^2+\eps}, \;x,h\in H,\\
R^{\prime\prime}(x)(h_1,h_2)&=&\frac{2}{|x|^2+\eps}(\lb
h_2,h_1\rb-\frac{2\lb x,h_2\rb\lb x,h_1\rb}{|x|^2+\eps}),\;
x,h_1,h_2\in H,
\end{eqnarray*}
the $1^{\rm st}$ and $2^{\rm nd}$ derivatives of $R$ are bounded
and hence the assumptions (i) and (ii) of \cite[Theorem
1.2]{[Pardoux_1979]} are satisfied. Since the embedding $V\embed
H$ is continuous, we  infer that the assumptions (iv) and (v) of
\cite[Theorem 1.2]{[Pardoux_1979]} are satisfied as well.
Moreover, for any $Q\in \mathcal{T}_1(H)$, we have
\begin{eqnarray*}
Tr(Q\circ R^{\prime\prime}(x)) &=& \frac{2}{|x|^2+\eps}( Tr
Q-\frac{2\lb Qx,x\rb}{|x|^2+\eps}),\; x\in H
\end{eqnarray*}
and hence  the map $H\ni x\mapsto Tr(Q\circ
R^{\prime\prime})(x)\in\mathbb{R}$ is continuous.  Thus also the
condition (iii) in    \cite[Theorem 1.4]{[Pardoux_1979]} is
satisfied. Therefore, $\log{(|u(t)|^2+\eps)}$, $t\geq 0$,  is an
It\^o process and
\begin{eqnarray}
-d\frac12\log{(|u(s)|^2+\eps)})&=&\suml_{k=1}^n\frac{\lb u,B_ku\rb }{|u|^2+\eps}\, dw^k(s) \nonumber\\
+\, \Big(\suml_{k=1}^n \frac{\lb u,B_ku\rb^2}{(|u|^2+\eps)^2} &+&
\frac{\lb
(A-\frac{1}{2}\suml_{k=1}^nB_k^*B_k)u+F(s,u),u\rb}{(|u|^2+\eps)^2}\Big)\,ds.\label{eqn:Aux-2}
\end{eqnarray}
Secondly, since $M_\eps(t)$, $t\geq 0$,  is a continuous square
integrable martingale satisfying equality \eqref{eqn:M_t},   the
process
$\psi^{\eps}(t)=-\frac{1}{2}M_\eps(t)\log{(|u(t)|^2+\eps)}$, $t\in
[0,T]$, is an It\^o process and
\begin{eqnarray}
d\psi^{\eps}(t)&=&-\frac{1}{2}\big(\log{(|u|^2+\eps)}dM_\eps(t)+M_\eps d\log{(|u|^2+\eps)}\nonumber\\
&+&d\lb M_\eps,\log{(|u|^2+\eps)}\rb_t\big).\label{eqn:Aux-3}
\end{eqnarray}
This completes the proof the Lemma.
\end{proof}

Let us define functions $\widetilde{\Lambda}_{\eps}$ and
$\widetilde{\Lambda}_{\eps}^F$ by
\begin{eqnarray}
\widetilde{\Lambda}_{\eps}(u)&=&\frac{\lb (A-\frac{1}{2}
\suml_{k=1}^nB_k^*B_k)u,u\rb}{|u|^2+\eps}+\suml_{k=1}^n\frac{\lb u,B_ku\rb^2}{(|u|^2+\eps)^2},\, u\in V, \label{eqn:LA_eps_def}\\
\widetilde{\Lambda}_{\eps}^F(t,u)&=&\widetilde{\Lambda}_{\eps}(u)+
\frac{\lb F(t,u),u\rb}{|u|^2+\eps},\; t\in [0,T],\, u\in
V,\eps\geq 0.\label{eqn:LA_eps_F_def}.
\end{eqnarray}
We will omit index $\eps$ if $\eps=0$.

Combining equalities \eqref{eqn:M_t}, \eqref{eqn:Aux-2} and
\eqref{eqn:Aux-3} we infer that
\begin{multline}\label{eqn:DerivPsi}
d\psi^{\eps}(s)=M_\eps(s)\Big( \widetilde{\Lambda}_{\eps}^F(s,u(s))\,ds+\suml_{k=1}^n\frac{\lb u,B_ku\rb}{|u|^2+\eps}\, dw^k(s)\\
+\log{(|u|^2+\eps)}\suml_{k=1}^n\frac{\lb u,B_ku\rb\,
dw^k(s)}{|u|^2+\eps}+\suml_{k=1}^n\frac{\lb
u,B_ku\rb^2}{(|u|^2+\eps)^2}\,ds\Big).
\end{multline}
It follows from assumption \eqref{eqn:B_kWeakBoundAss} that the
integrands in the stochastic integrals in \eqref{eqn:DerivPsi}
belong to $\mathcal{M}^2(0,T;\mathbb{R})$. Therefore, we can apply
mathematical expectation to \eqref{eqn:DerivPsi} and consequently,
by  Assumption \eqref{eqn:B_kWeakBoundAss} we infer that for all
$t\in [0,T]$,
\begin{equation}
\label{eqn:Part1AuxInequality-1}
\frac{1}{2}\mathbb{E}M_\eps(0)\log{(|u(0)|^2+\eps)}-\frac{1}{2}\mathbb{E}M_\eps(t)\log{(|u(t)|^2+\eps)}\leq
C_1+C_2\intl_{0}^t\mathbb{E}M_\eps(s)\widetilde{\Lambda}_{\eps}^F(s,u(s))\,ds.
\end{equation}

Suppose now that the following result is true.
\begin{lemma}\label{lem_est}
In the above framework we have
$$\supl_{\eps>0}\intl_{0}^T \mathbb{E}M_\eps(s)\widetilde{\Lambda}_{\eps}^F(s,u(s))\,ds = \supl_{\eps>0}\intl_{0}^T \mathbb{E}_{\mathbb{Q}^\eps}\widetilde{\Lambda}_{\eps}^F(s,u(s))\, ds<\infty.$$
\end{lemma}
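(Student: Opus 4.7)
The plan is a two-step estimate: I would first bound each summand in the decomposition
$\widetilde{\Lambda}_\eps^F = \frac{\langle \tilde A u,u\rangle}{|u|^2+\eps}+\sum_{k=1}^n\frac{\langle u,B_ku\rangle^2}{(|u|^2+\eps)^2}+\frac{\langle F(s,u),u\rangle}{|u|^2+\eps}$
by a pathwise quantity that is integrable in $s$, and then transfer the $\mathbb{P}$-estimate to $\mathbb{Q}^\eps$ via H\"older using uniform-in-$\eps$ moment bounds on the Girsanov density $M_\eps$. The identity $\mathbb{E}[M_\eps X]=\mathbb{E}_{\mathbb{Q}^\eps}[X]$ is immediate from the Radon--Nikodym definition of $\mathbb{Q}^\eps$, so only the finiteness statement requires real work.

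The moment bound on $M_\eps$ follows from Assumption \textbf{(AC3)}: since the integrand $\rho_k^\eps$ in the stochastic exponential is pointwise bounded by $\phi\in L^\infty(0,T)$, Novikov's criterion applies uniformly in $\eps$ and yields $\sup_{\eps>0}\mathbb{E}M_\eps(T)^p<\infty$ for every $p\geq 1$. For the three summands of $\widetilde{\Lambda}_\eps^F$ my estimates are: (i) \textbf{(AC3)} directly dominates the middle summand pointwise by $\phi(s)^2$; (ii) for the $F$-summand, \eqref{eqn:NonlinearityAss} together with the trivial bound $|u|/\sqrt{|u|^2+\eps}\leq 1$ and Young's inequality give $|\langle F(s,u),u\rangle|/(|u|^2+\eps)\leq\tfrac12 n(s)^2+\tfrac12 \|u\|^2/(|u|^2+\eps)$; (iii) for the $\tilde A$-summand, \textbf{(AC0)} and \textbf{(AC6)} give $\langle\tilde A u,u\rangle/(|u|^2+\eps)\leq\beta\|u\|^2/(|u|^2+\eps)+\gamma$. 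Collecting the three, one obtains $\widetilde{\Lambda}_\eps^F(s,u(s))\leq C_1\bigl(1+n(s)^2+\phi(s)^2\bigr)+C_2\|u(s)\|^2/(|u(s)|^2+\eps)$.

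Taking $\mathbb{E}_{\mathbb{Q}^\eps}$ and integrating over $[0,T]$, the contributions of $\phi^2$ and $n^2$ are uniformly bounded in $\eps$: for the $n^2$-term I would apply H\"older with conjugate exponents $(p,q)$ satisfying $q\leq\kappa/2$, controlling the $M_\eps$-factor by the $L^p(\mathbb{P})$-moment from Novikov and the $n^{2q}$-factor by the exponential moment \eqref{eqn:NonlinearityAss-n}; the numerological condition $\kappa>2+4/\delta_0$ is exactly what allows the H\"older exponents to be chosen compatibly with the integrability \eqref{eqn:SolutionAss-2}. The principal obstacle will be to bound
$\sup_{\eps>0}\mathbb{E}_{\mathbb{Q}^\eps}\int_0^T \|u(s)\|^2/(|u(s)|^2+\eps)\,ds$
uniformly in $\eps$, since the integrand may blow up when $|u|$ becomes small. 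For this I would apply H\"older once more to separate the $M_\eps$-factor from the ratio, using the pathwise bound on $\sup_{s\in[0,T]}\|u(s)\|^2$ from \eqref{eqn:SolutionAss-2} together with a controlled estimate on the measure of the small-$|u|$ set derived from the energy inequality implied by the coercivity \textbf{(AC2)} and the higher regularity \eqref{eqn:SolutionAss-1'}. This is the hardest step and is where the bulk of the technical work will lie.
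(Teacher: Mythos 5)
Your reduction is sound up to the point where you isolate the term $\sup_{\varepsilon>0}\mathbb{E}_{\mathbb{Q}^\varepsilon}\int_0^T \Vert u(s)\Vert^2/(|u(s)|^2+\varepsilon)\,ds$, but what you defer as ``the hardest step'' is in fact the entire content of the lemma, and the method you sketch for it cannot work. By the coercivity assumption \textbf{(AC2)} and \textbf{(AC6)}, the ratio $\Vert u\Vert^2/(|u|^2+\varepsilon)$ is equivalent (up to additive and multiplicative constants) to the Rayleigh quotient $\widetilde{\Lambda}_\varepsilon(u)$ itself, so your decomposition is circular: you have reduced the statement to an equivalent statement. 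Moreover, no static argument of the type you propose --- H\"older plus an estimate on the measure of the set where $|u|$ is small --- can close the gap, because on that set the ratio is unbounded as $\varepsilon\to 0$; controlling the measure of $\{|u|\le\delta\}$ says nothing about the size of $\Vert u\Vert^2/|u|^2$ there, and in the intended application $u(T)=0$ a.s., so $|u|$ genuinely degenerates while $\Vert u\Vert$ need not. The finiteness of $\int_0^T\widetilde{\Lambda}_\varepsilon\,ds$ uniformly in $\varepsilon$ is precisely the statement that $|u(t)|$ cannot decay super-exponentially, which is the heart of the backward-uniqueness problem and cannot follow from the moment assumptions on $u$ alone.

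The paper's proof is instead dynamic: it applies the It\^o formula to the finite-dimensional approximations $\widetilde{\widetilde{\Lambda^N_\varepsilon}}(u)=\langle\tilde{A}_Nu,u\rangle/(|u|^2+\varepsilon)$ (the It\^o formula is not directly applicable to $\tilde{A}$, whence the projections $\tilde A_N=P_N'\tilde A P_N$ and the convergence Lemma \ref{lem:AuxRes_2}), multiplied by the Girsanov density $M_\varepsilon(t)$. The computation produces a drift whose leading term is the \emph{negative} quantity $-2|(\tilde{A}_N-\widetilde{\widetilde{\Lambda^N_\varepsilon}})u|^2/(|u|^2+\varepsilon)$; the remaining terms are absorbed using the commutator assumption \textbf{(AC4)} and \textbf{(AC1)}, leading to a Gronwall-type inequality for $M_\varepsilon(t)\widetilde{\widetilde{\Lambda}}_\varepsilon(u(t))$ whose right-hand side depends only on $\widetilde{\widetilde{\Lambda}}_\varepsilon(u(0))\le \Vert u(0)\Vert^2/c^2$ and on $\int_0^T(n^2+K_2+K_6)\,ds$. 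The numerological condition $\kappa>2+4/\delta_0$ is used there to verify, via Burkholder and H\"older, that the stochastic integral appearing in this inequality is a true martingale, not (as in your sketch) to balance exponents in a static H\"older estimate. You would need to import this entire logarithmic-convexity mechanism to complete your argument; the parts of your proposal that are correct (the bounds via \textbf{(AC3)}, the treatment of the $n^2$-term, the uniform moments of $M_\varepsilon$) coincide with the routine preliminary reductions in the paper.
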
 Then,  in conjunction with \eqref{eqn:Part1AuxInequality-1} and
\eqref{eqn:Part1AuxInequality-2}  we have
 \del{$K=\intl_{0}^T|C(s)|\,ds < \infty$ we will have} that 
\begin{eqnarray*}
\mathbb{E}_{\mathbb{Q}^\eps}\log{(|u(t)|^2+\eps)}&\geq &
\mathbb{E}_{\mathbb{Q}^\eps}\log{(|u(0)|^2+\eps)}-K\\
&=& \mathbb{E}\log{(|u(0)|^2+\eps)}-K,\; t\in [0,T],
\end{eqnarray*}
where $K=2C_1+2C_2\supl_{\eps>0}\intl_{0}^T
\mathbb{E}_{\mathbb{Q}^\eps}\widetilde{\Lambda}_{\eps}^F(s,u(s))\,
ds$. Hence  by the Jensen inequality
\begin{equation}
\mathbb{E}_{\mathbb{Q}^\eps}(|u(t)|^2+\eps) =
\mathbb{E}_{\mathbb{Q}^\eps}e^{\log{(|u(t)|^2+\eps)}}
\geq  e^{\mathbb{E}_{\mathbb{Q}^\eps}\log{(|u(t)|^2+\eps)}} \geq
e^{\mathbb{E}\log{(|u(0)|^2+\eps)}-K}\del{>0}.\label{eqn:AuxEstimate-1}
\end{equation}
Therefore, by the Fatou Lemma and \eqref{eqn:LowerBoundSol}, we
infer that
\begin{eqnarray}
\liminfl_{\eps\to 0}e^{\mathbb{E}\log{(|u(0)|^2+\eps)}-K} \geq
e^{\mathbb{E}\log{(|u(0)|^2)}-K}>0.\label{eqn:AuxEstimate-2}
\end{eqnarray}
Combining  \eqref{eqn:AuxEstimate-1} and
\eqref{eqn:Part1AuxInequality-2} we get 
\begin{eqnarray}
\eps+\mathbb{E}[M_\eps(t)|u(t)|^2]=\mathbb{E}_{\mathbb{Q}^\eps}(|u(t)|^2+\eps)\geq
e^{\mathbb{E}\log{(|u(0)|^2)}-K}>0\label{eqn:AuxEstimate-2'}
\end{eqnarray}
Choose $\eps=\frac{1}{2}e^{\mathbb{E}\log{(|u(0)|^2)}-K}$. Since
$u$
 is an $\mathbb{F}$-adapted process we get by
\eqref{eqn:Part1AuxInequality-2} and \eqref{eqn:AuxEstimate-2'}
\begin{equation}
\mathbb{E}_{\mathbb{Q}^{\eps}}|u(t)|^2=\mathbb{E}[M_\eps(t)|u(t)|^2]\geq
\frac{1}{2}e^{\mathbb{E}\log{(|u(0)|^2)}-K}>0,\label{eqn:AuxEstimate-3}
\end{equation}
what contradicts our assumption that $u(t)=0$ and the Theorem
follows. \end{proof}

Hence, it only remains to prove Lemma \ref{lem_est}.
The proof of Lemma \ref{lem_est} will be preceded by the following
auxiliary results:
\begin{lemma}\label{lem:AuxRes_2}
%
In the above framework there exists set
$\Omega^\prime\subset\Omega$, $\mathbb{P}(\Omega^\prime)=1$ and
sequence $\{N_l\}_{l=1}^{\infty}$ such that for all
$\omega\in\Omega^\prime$ , $t\in[0,T]$, as $l\to\infty$, the
following holds
\begin{eqnarray}
&&\intl_{0}^tM_\eps(s)\frac{|(\tilde{A}_{l}-\tilde{A})u|^2}{|u|^2+\eps}\,ds\to 0,\label{eqn:AuxConv-1.1}\\
&&\intl_{0}^tM_\eps(s)\frac{\suml_{k=1}^n\Vert
Q_{l}B_ku\Vert^2}{|u|^2+\eps}\,ds\to 0,\label{eqn:AuxConv-2.1}\\
&&\frac{<(\tilde{A}_{N_l}-\tilde{A})u(t),u(t)>}{|u(t)|^2+\eps}\to
0,\label{eqn:AuxConv-3.1}\\
&&\intl_0^tM_\eps(s)\suml_{k=1}^n\frac{<(\tilde{A}_{N_l}-\tilde{A})u(s),B_ku(s)>}{|u(s)|_H^2+\eps}dw^k(s)\to
0.\label{eqn:AuxConv-4.1}
\end{eqnarray}
\end{lemma}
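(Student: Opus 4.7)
\emph{Proof plan.} I would approach all four convergences following a common pattern: establish pointwise (in $(s,\omega)$ or in $t$) convergence of the integrands using the strong convergence $P_N \to I$ on $V$ and $H$; dominate by integrable functions using the regularity assumptions \eqref{eqn:SolutionAss-2}, \eqref{eqn:SolutionAss-1'} together with the $L^p$-integrability of $M_\eps$ (guaranteed since $|\rho^\eps_k|\leq \phi \in L^\infty$ by \textbf{(AC3)}); apply dominated convergence, using the Burkholder--Davis--Gundy inequality to handle the stochastic integral in \eqref{eqn:AuxConv-4.1}, to obtain convergence in mean; finally extract a diagonal subsequence $\{N_l\}$ along which all four statements hold almost surely, with $\Omega'$ the intersection of the corresponding full-measure sets.

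For pointwise convergence, I would argue as follows. For $(ds\otimes d\mathbb{P})$-a.e.\ $(s,\omega)$, the assumption \eqref{eqn:SolutionAss-1'} gives $u(s,\omega)\in D(\hat A)\subset V$, so $P_N u(s)\to u(s)$ in $V$ (and in $H$); by \textbf{(AC5)}, $B_k u(s)\in V$ and hence $Q_N B_k u(s)\to 0$ in $V$. Decomposing
\[(\tilde A_N - \tilde A) u \;=\; P_N'\tilde A (P_N - I) u \;+\; (P_N' - I)\tilde A u,\]
and using $P_N'|_H = P_N$ together with the fact that $P_N'$ has finite-dimensional image in $H_N$, both summands belong to $H$ and converge to $0$ in $H$. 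This handles the integrands of \eqref{eqn:AuxConv-1.1}, \eqref{eqn:AuxConv-2.1}, and, via Cauchy--Schwarz in $H$, also the squared integrand in a BDG bound on \eqref{eqn:AuxConv-4.1}. For \eqref{eqn:AuxConv-3.1} the adjointness identity
\[\langle(\tilde A_N - \tilde A) u, u\rangle \;=\; \langle \tilde A P_N u, P_N u\rangle - \langle \tilde A u, u\rangle\]
together with $u(\cdot,\omega)\in C([0,T], V)$ from \eqref{eqn:SolutionAss-2} and continuity of $\tilde A:V\to V'$ gives convergence for every $t$ and the full sequence $N$, so no subsequence extraction is required here.

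For the integrable dominations, each integrand in \eqref{eqn:AuxConv-1.1}, \eqref{eqn:AuxConv-2.1} is bounded by $\eps^{-1} M_\eps(s)\Phi(s)$ where $\Phi$ is a combination of $|\tilde A u|^2$, $\Vert u\Vert^2$ and $|A u|^2$; H\"older together with \eqref{eqn:SolutionAss-1'}, \eqref{eqn:SolutionAss-2} and $\mathbb{E}[\supl_t M_\eps(t)^p]<\infty$ for all $p$ makes this $d\mathbb{P}\otimes ds$-integrable. Dominated convergence yields $L^1(\mathbb{P})$-convergence of the full $\intl_0^T(\cdot)\,ds$, and non-negativity of the integrands upgrades (along a subsequence) to almost sure convergence uniform in $t\in[0,T]$. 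For \eqref{eqn:AuxConv-4.1}, BDG bounds
\[\mathbb{E}\supl_{t\leq T}|I_N(t)|^2 \;\leq\; C\,\mathbb{E}\intl_0^T M_\eps^2(s)\suml_k \frac{|\langle(\tilde A_N - \tilde A) u, B_k u\rangle|^2}{(|u|^2+\eps)^2}\,ds,\]
whose integrand admits the same kind of integrable domination after using \textbf{(AC5)} to control $|B_k u|$; a subsequence gives almost sure uniform convergence in $t$.

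A diagonal subsequence witnessing simultaneous a.s.\ convergence for \eqref{eqn:AuxConv-1.1}, \eqref{eqn:AuxConv-2.1}, \eqref{eqn:AuxConv-4.1} then finishes the proof, \eqref{eqn:AuxConv-3.1} being automatic on the same full-measure set. The main technical obstacle I anticipate is ensuring that the Girsanov weight $M_\eps$ (and its square $M_\eps^2$ in the BDG bound) does not destroy integrability; this is exactly where the boundedness of $\phi$ in \textbf{(AC3)} is used in an essential way, alongside the higher integrability of $u$ in \eqref{eqn:SolutionAss-2} and the second-order regularity in \eqref{eqn:SolutionAss-1'}.
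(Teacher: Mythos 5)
Your overall architecture (the decomposition $(\tilde A_N-\tilde A)u=P_N'\tilde A(P_N-I)u+(P_N'-I)\tilde Au$, dominated convergence, BDG for the stochastic integral, subsequence/diagonal extraction) matches the paper's, and your observation that \eqref{eqn:AuxConv-3.1} holds for the full sequence by compactness of the trajectory in $V$ is a legitimate simplification of the paper's Dini--Chebyshev argument. However, there is a genuine gap in the domination step. For \eqref{eqn:AuxConv-1.1} and \eqref{eqn:AuxConv-2.1} you claim that $\eps^{-1}M_\eps(s)\Phi(s)$, with $\Phi$ built from $|\tilde Au|^2$, $\Vert u\Vert^2$, $|Au|^2$, is $d\mathbb{P}\otimes ds$-integrable by H\"older and the fact that $\mathbb{E}\sup_t M_\eps(t)^p<\infty$ for all $p$. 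This does not close: the hypothesis \eqref{eqn:SolutionAss-1'} gives only $\mathbb{E}\int_0^T|\tilde Au|^2\,ds<\infty$, i.e.\ a \emph{first} moment, so any H\"older pairing against $\sup_sM_\eps$ would need $\int_0^T|\tilde Au|^2\,ds\in L^q(\Omega)$ for some $q>1$, which is not assumed. The paper avoids taking expectations here entirely: it freezes $\omega$ in a full-measure set on which $\sup_{s\le T}M_\eps(s,\omega)<\infty$ and $u(\cdot,\omega)\in L^2(0,T;D(\hat A))$, pulls $\sup_sM_\eps/\eps$ out of the integral, and applies the \emph{deterministic} dominated convergence theorem in $L^2(0,T;H)$; this yields a.s.\ convergence of the full sequence with no moment of the product ever being needed.

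The same problem is sharper in your treatment of \eqref{eqn:AuxConv-4.1}. Your second-moment BDG bound forces you to show $\mathbb{E}\int_0^TM_\eps^2|(\tilde A_N-\tilde A)u|^2\sum_k|B_ku|^2(|u|^2+\eps)^{-2}\,ds<\infty$; after bounding $|B_ku|\lesssim\Vert u\Vert$ the three-factor H\"older needs exponents $q_1,q_2,q_3$ with $1/q_1+1/q_2+1/q_3=1$ and $q_3=1$ on the factor $\int_0^T|\tilde Au|^2\,ds$, which is impossible. The paper instead uses the first-moment Burkholder inequality, so the quadratic variation enters as a square root and the factor $\bigl(\mathbb{E}\int_0^T|(\tilde A_{M_l}-\tilde A)u|^2\,ds\bigr)^{1/2}$ only requires the assumed first moment; the H\"older exponents $p_1=2+4/\delta_0$, $p_2=2+\delta_0$, $p_3=2$ then exactly exhaust the available integrability of $M_\eps$, of $\sup_s\Vert u(s)\Vert$ from \eqref{eqn:SolutionAss-2}, and of $u$ in $\mathcal{M}^2(0,T;D(\hat A))$. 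Moreover, to get almost sure (rather than merely in-probability) convergence the paper first passes to a subsequence $\{M_l\}$ along which $|u_{M_l}-u|_{\mathcal{M}^2(0,T;D(\hat A))}+|Q_{M_l}\tilde Au|_{\mathcal{M}^2(0,T;H)}\le 1/l$, so that the resulting bound is summable and Borel--Cantelli applies; your plan should make this quantitative step explicit rather than appealing to a generic subsequence extraction.
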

\begin{proof}[Proof of Lemma \ref{lem:AuxRes_2}]
It is enough to prove that for each of convergences
\eqref{eqn:AuxConv-1.1}-\eqref{eqn:AuxConv-4.1} we can find
$\Omega_i'\subset\Omega$, $i=1,\ldots,4$ of measure $1$ such that
the corresponding convergence holds. Denote $v_N=P_Nv$, $v\in H$.
Then, for $t\in[0,T]$, the following inequalities holds
\begin{equation}
\intl_{0}^tM_\eps(s)\frac{|(\tilde{A}_{N}-\tilde{A})u|^2}{|u|^2+\eps}\,ds\leq
\intl_{0}^TM_\eps(s)\frac{|(\tilde{A}_{N}-\tilde{A})u|^2}{|u|^2+\eps}\,ds
\leq
\frac{\supl_{s\in[0,T]}M_\eps(s)}{\eps}\intl_{0}^T|(\tilde{A}_{N}-\tilde{A})u|^2\,ds,\label{eqn:AuxConv-1.2a}
\end{equation}
and
\begin{eqnarray}
\intl_{0}^T|(\tilde{A}_{N}-\tilde{A})u|^2\,ds \leq
\Big(\intl_{0}^T|P_N^*\tilde{A}(P_Nu-u)|^2\,ds+
\intl_{0}^T|(P_N^*-\id)\tilde{A}u|^2ds\Big)\nonumber\\
\leq
\Big(|P_Nu-u|_{L^2(0,T;D(\hat{A}))}^2+|Q_N\tilde{A}u|_{L^2(0,T;H)}^2\Big).\label{eqn:AuxConv-1.2}
\end{eqnarray}
Since $M_{\eps}$ is a square integrable martingale, then by Doob
inequality there exist a set $\Omega_1\subset\Omega$ of measure
$1$ such that $\supl_{s\in[0,T]}M_\eps(s,\omega)<\infty$,
$\omega\in \Omega_1$. Moreover by Assumption
\eqref{eqn:SolutionAss-1'} there exist $\Omega_2\subset\Omega$ of
measure $1$ such that
\begin{equation}
u(\cdot,\omega)\in L^2(0,T;D(\hat{A})),\;\omega\in
\Omega_2.\label{eqn:AuxAssumption}
\end{equation}
Therefore, if $\omega\in \Omega_1'=\Omega_1\cap\Omega_2$, then by
the Lebesgue Dominated Convergence Theorem,
\begin{equation}
\frac{2\supl_{s\in[0,T]}M_\eps(s,\omega)}{\eps}\Big(|P_Nu(\omega)-u(\omega)|_{L^2(0,T;D(\hat{A}))}^2+|Q_N\tilde{A}u(\omega)|_{L^2(0,T;H)}^2\Big)\to
0,N\to\infty.\label{eqn:AuxConv-1.3}
\end{equation}
Consequently, by inequalities \eqref{eqn:AuxConv-1.2a} and
\eqref{eqn:AuxConv-1.2} we infer that
\begin{equation}
\intl_{0}^tM_\eps(s)\frac{|(\tilde{A}_{N}-\tilde{A})u|^2}{|u|^2+\eps}\,ds\to
0,N\to\infty,\omega\in \Omega_1\cap\Omega_2, t\in
[0,T].\label{eqn:AuxConv-1.4}
\end{equation}
This concludes the proof of \eqref{eqn:AuxConv-1.1}.

Similarly to \eqref{eqn:AuxConv-1.2a} we get
\begin{equation}
\intl_{0}^tM_\eps(s)\frac{\suml_k\Vert
Q_{N}B_ku\Vert^2}{|u|^2+\eps}\,ds\leq
\frac{\supl_{s\in[0,T]}M_\eps(s,\omega)}{\eps}\intl_{0}^T\suml_{k=1}^n\Vert
Q_{N}B_ku\Vert^2\,ds.\label{eqn:AuxConv-2.2}
\end{equation}
If $\omega\in \Omega_2$ then by assumptions
\eqref{eqn:B_strongboundAss} and \eqref{eqn:AuxAssumption} that
$B_ku(\omega)\in L^2(0,T;V),k=1,\ldots,n$. Hence, by the Lebesgue
Dominated Convergence Theorem for $\omega\in
\Omega_1\cap\Omega_2$,
\begin{equation}
\frac{\supl_{s\in[0,T]}M_\eps(s,\omega)}{\eps}\intl_{0}^T\suml_{k=1}^n\Vert
Q_{N}B_ku\Vert^2\,ds\to 0,\mbox{ as
}N\to\infty.\label{eqn:AuxConv-2.3}
\end{equation}
Therefore, by combining \eqref{eqn:AuxConv-2.2} and
\eqref{eqn:AuxConv-2.3}, we infer that
\begin{equation}
\intl_{0}^tM_\eps(s)\frac{\suml_k\Vert
Q_{N}B_ku\Vert^2}{|u|^2+\eps}\,ds\to 0,N\to\infty,\omega\in
\Omega_1\cap\Omega_2,t\in[0,T].\label{eqn:AuxConv-2.4}
\end{equation}
This proves \eqref{eqn:AuxConv-2.1} with $\Omega_2'=\Omega_1'$.

From Assumption \eqref{eqn:SolutionAss-2} by the Lebesgue
Dominated Convergence Theorem and Dini's Theorem we infer that
\begin{equation}
\mathbb{E}\supl_{s\in[0,T]}||u_m(s)-u(s)||_V^{1+\delta_0}\to
0,m\to\infty
\end{equation}
Therefore, by Chebyshev inequality,
$\supl_{s\in[0,T]}||u_m(s)-u(s)||_V^{1+\delta_0}$ converges to $0$
as $m\to\infty$ in probability. Consequently, there exist sequence
$\{m_k\}_{k=1}^{\infty}$
and set $\Omega_3\subset\Omega$, $\mathbb{P}(\Omega_3)=1$ such
that
\begin{equation}
\supl_{s\in[0,T]}||u_{m_k}(s,\omega)-u(s,\omega)||_V\to
0,k\to\infty,\omega\in \Omega_3.\label{eqn:AuxConv-3.3}
\end{equation}
Now we shall prove that \eqref{eqn:AuxConv-3.1} holds with the
subsequence $\{m_l\}_{l=1}^{\infty}$ defined above and $\omega\in
\Omega_3$. The following sequence of inequalities holds for $v\in
V, t\in[0,T]$
\begin{multline}
\frac{|<(\tilde{A}_{m}(t)-\tilde{A}(t))v,v>|}{|v|^2+\eps}\leq
\frac{1}{\eps}\left(|<\tilde{A}(t)v_m,v_m>-<\tilde{A}(t)v,v>|\right)\\\leq
\frac{1}{\eps}\left(|<\tilde{A}(t)v_m,v_m>-<\tilde{A}(t)v_m,v>|+|<\tilde{A}(t)v_m,v>-<\tilde{A}(t)v,v>|\right)\\
\leq\frac{1}{\eps}\left(|<\tilde{A}(t)v_m,v_m-v>|+|<\tilde{A}(t)(v_m-v),v>|\right)\\
\leq\frac{|\tilde{A}(t)|_{\mathcal{L}(V,V')}}{\eps}(||v_m||_V||v_m-v||_V+||v||_V||v_m-v||_V)
\leq\frac{2|\tilde{A}(t)|_{\mathcal{L}(V,V')}}{\eps}||v||_V||v_m-v||_V
\end{multline}
Therefore, by \eqref{eqn:AuxConv-3.3}, Assumption
\eqref{eqn:SolutionAss-2} and Assumption \textbf{(AC0)} we infer
that
\begin{equation}\label{eqn:AuxConv-3.4}
\liml_{l\to\infty}\supl_{s\in[0,T]}\frac{|<(\tilde{A}_{m_l}(s)-\tilde{A}(s))u(s),u(s)>|}{|u(s)|^2+\eps}=0,\omega\in\Omega_3.
\end{equation}
This proves \eqref{eqn:AuxConv-3.1}.
%
%
%
%
%
%
%
%

It remains to prove convergence in \eqref{eqn:AuxConv-4.1}.
By Assumption \eqref{eqn:SolutionAss-1'} and the Lebesgue
Dominated Convergence Theorem there exist a subsequence
$\{M_l\}_{l=1}^{\infty}$ of the sequence $\{m_k\}_{k=1}^{\infty}$
such that
\begin{equation}
|u_{M_l}-u|_{\mathcal{M}^2(0,T;D(\hat{A}))}+|P_{M_l}\tilde{A}(\cdot)u(\cdot)-\tilde{A}(\cdot)u(\cdot)|_{\mathcal{M}^2(0,T;H)}\leq\frac{1}{l}.\label{eqn:AuxConv-4.2}
\end{equation}
Denote
$$
p_1=2+\frac{4}{\delta_0},p_2=2+\delta_0, p_3=2.
$$
Let us observe that $\frac{1}{p_1}+\frac{1}{p_2}+\frac{1}{p_3}=1$.
Therefore, by H{\"o}lder and Burkholder inequalities, see e.g.
Corollary iv.4.2 in \cite{[RevuzYor-1999]}, we have
\begin{eqnarray}
&&\hspace{-2truecm}\lefteqn{\mathbb{E}\left|\supl_{t\in[0,T]}\intl_0^tM_\eps(s)
\suml_{k=1}^n\frac{<(\tilde{A}_{M_l}-\tilde{A})u(s),B_ku(s)>}{|u(s)|_H^2+\eps}dw^k(s)\right|}
\nonumber\\
&&\leq
C\mathbb{E}\left(\intl_0^TM_\eps^2(s)\suml_{k=1}^n\frac{|<(\tilde{A}_{M_l}-\tilde{A})u(s),B_ku(s)>|^2}{(|u(s)|_H^2+\eps)^2}ds\right)^{1/2}\nonumber\\
&&\leq\frac{C}{\eps}\mathbb{E}\left[\supl_{s\in[0,T]}\left(M_\eps(s)\suml_k|B_ku(s)|^2\right)\left(\intl_0^T|(\tilde{A}_{M_l}-\tilde{A})u(s)|^2\,ds\right)^{1/2}\right]\nonumber\\
&&\leq\frac{C}{\eps}\left(\mathbb{E}\supl_{s\in[0,T]}M_\eps^{p_1}(s)\right)^{1/p_1}
\left(\mathbb{E}\supl_{s\in[0,T]}\left(\suml_{k=1}^n(|B_ku|^2(s))\right)^{p_2/2}\right)^{1/p_2}\nonumber\\
&&\hspace{0.5truecm}\lefteqn{\times\left(\mathbb{E}\left(\intl_0^T|(\tilde{A}_{M_l}-\tilde{A})u|^2(s)ds\right)^{p_3/2}\right)^{1/p_3}}
\nonumber\\
&&=\frac{C}{\eps}\left(\mathbb{E}\supl_{s\in[0,T]}M_\eps^{p_1}(s)\right)^{1/p_1}
\left(\mathbb{E}\supl_{s\in[0,T]}\left(\suml_{k=1}^n(|B_ku|^2(s))\right)^{(2+\delta_0)/2}\right)^{1/p_2}\nonumber\\
&&\times\left(\mathbb{E}\intl_0^T|(\tilde{A}_{M_l}-\tilde{A})u|^2(s)ds\right)^{1/2}\label{eqn:AuxConv-4.3}
\end{eqnarray}
Next we will show that the RHS of \eqref{eqn:AuxConv-4.3} is
finite. Notice that the first factor on the RHS of
\eqref{eqn:AuxConv-4.3} is finite by the Doob inequality and
Assumption \eqref{eqn:B_kWeakBoundAss}. Furthermore, the second
term is finite by assumptions
\eqref{eqn:CoercivAss},\eqref{eqn:AStrongBoundAss} and
\eqref{eqn:SolutionAss-2}. Now we will find the upper bound for
the last term in the product $W$. We have
\begin{eqnarray}
&&\intl_0^T\mathbb{E}|(\tilde{A}_{M_l}-\tilde{A})u|^2(s)ds\nonumber\\
&\leq &
C\left(\intl_0^T\mathbb{E}|P_{M_l}^*\tilde{A}P_{M_l}u-P_{M_l}^*\tilde{A}u|^2(s)ds+
\intl_0^T\mathbb{E}|P_{M_l}^*\tilde{A}u-\tilde{A}u|^2(s)ds\right)\nonumber\\
&\leq &
C\left(\intl_0^T\mathbb{E}|\tilde{A}P_{M_l}u-\tilde{A}u|^2(s)ds+
\intl_0^T\mathbb{E}|Q_{M_l}\tilde{A}u|^2(s)ds\right)\nonumber\\
&&=C\left(|u_{M_l}-u|_{\mathcal{M}^2(0,T;D(\hat{A}))}^2
+|Q_{M_l}\tilde{A}u|_{\mathcal{M}^2(0,T;H)}^2\right)\leq
\frac{C}{l^2}\label{eqn:AuxConv-4.4}
\end{eqnarray}
where last inequality follows from assumption
\eqref{eqn:AuxConv-4.2}. Combining \eqref{eqn:AuxConv-4.3} and
\eqref{eqn:AuxConv-4.4} we infer that
\begin{equation}
\mathbb{E}\supl_{t\in[0,T]}
\left|\intl_0^tM_\eps(s)\suml_{k=1}^n\frac{<(\tilde{A}_{M_l}-\tilde{A})u(s),B_ku(s)>}{|u(s)|_H^2+\eps}dw^k(s)\right|
\leq \frac{C}{l^2}.\label{eqn:AuxConv-4.5}
\end{equation}
By Borel-Cantelli Lemma and Doob inequality we infer from
inequality \eqref{eqn:AuxConv-4.5} that there exist $\Omega_4$,
$\mathbb{P}(\Omega_4)=1$ such that
\begin{equation}
\intl_0^tM_\eps(s)\suml_{k=1}^n\frac{<(\tilde{A}_{M_l}-\tilde{A})u(s),B_ku(s)>}{|u(s)|_H^2+\eps}dw^k(s)\to
0,l\to\infty,\omega\in\Omega_4,t\in[0,T].\label{eqn:AuxConv-4.6}
\end{equation}
Put $\Omega^\prime=\bigcap\limits_{i=1}^4\Omega_i$. Combining
convergence results \eqref{eqn:AuxConv-1.4},
\eqref{eqn:AuxConv-2.4}, \eqref{eqn:AuxConv-3.4} and
\eqref{eqn:AuxConv-4.6} we prove the Lemma with sequence
$\{M_l\}_{l=1}^{\infty}$ and space $\Omega^\prime$.
\end{proof}

\begin{proof}[Proof of Lemma \ref{lem_est}]
We have by the assumptions \eqref{eqn:CoercivAss},
\eqref{eqn:NonlinearityAss-n}, \eqref{eqn:NonlinearityAss} the
following chain of inequalities
\begin{multline*}
\intl_{0}^t\mathbb{E}_{\mathbb{Q}^\eps}\widetilde{\Lambda}_{\eps}^F(s,u(s))\,ds\leq\intl_{0}^t\mathbb{E}_{\mathbb{Q}^\eps}\widetilde{\Lambda}_{\eps}(u(s))\,ds
+\mathbb{E}_{\mathbb{Q}^\eps}\intl_{0}^t\frac{n(s)|u|\Vert
u\Vert}{|u|^2 +\eps}\,ds
\\
\leq\intl_{0}^t\mathbb{E}_{\mathbb{Q}^\eps}\widetilde{\Lambda}_{\eps}(u(s))\,ds
+(\mathbb{E}_{\mathbb{Q}^\eps}\intl_{0}^tn^2(s)\,ds)^{1/2}(\mathbb{E}_{\mathbb{Q}^\eps}\intl_{0}^t\frac{|u|^2\Vert
u\Vert^2}{(|u|^2+\eps)^2}\,ds)^{1/2}
\\
\leq\intl_{0}^t\mathbb{E}_{\mathbb{Q}^\eps}\widetilde{\Lambda}_{\eps}(u(s))\,ds
+C(\mathbb{E}_{\mathbb{Q}^\eps}\intl_{0}^t\frac{\Vert
u\Vert^2}{|u|^2+\eps}\,ds)^{1/2}
\\
\leq\intl_{0}^t\mathbb{E}_{\mathbb{Q}^\eps}\widetilde{\Lambda}_{\eps}(u(s))\,ds+C(\intl_{0}^t\mathbb{E}_{\mathbb{Q}^\eps}\widetilde{\Lambda}_{\eps}(u(s))-\lambda\,ds)^{1/2}.
\end{multline*}
Therefore, it is enough to estimate from above the term
$\mathbb{E}_{\mathbb{Q}^\eps}\widetilde{\Lambda}_{\eps}(u(s))$.
Because of the assumption \eqref{eqn:B_kWeakBoundAss} we have only
to consider the following function
$\widetilde{\widetilde{\Lambda}}_{\eps}(u(t))$, $t\geq 0$, where
$$\widetilde{\widetilde{\Lambda}}_{\eps}(u)=\frac{\lb (A-\frac{1}{2}\suml_{k=1}^nB_k^*B_k)u,u\rb}{|u|^2+\eps}=\frac{\lb \tilde{A}u,u\rb}{|u|^2+\eps},\;\; u\in V.$$

We will prove that
\begin{equation}
\supl_t\mathbb{E}_{\mathbb{Q}^\eps}\widetilde{\widetilde{\Lambda}}_{\eps}(u(t))<\infty.\label{eqn:WidetildeLambdaEst}
\end{equation}
Since we cannot directly apply the It\^{o} formula to the function
$\widetilde{\widetilde{\Lambda}}_{\eps}$ we will consider finite
dimensional its approximations.

For this aim define 
function $\widetilde{\widetilde{\Lambda^N_\eps}}(u):=\frac{\lb
\tilde{A}_Nu,u\rb}{|u|^2+\eps}$, $u\in H$.  Then, by the Lemma
\ref{lem:AuxRes_1} applied with $C=\tilde{A}_N$,
$\widetilde{\widetilde{\Lambda^N_\eps}}$ is of $C^2$ class and it
has bounded $1^{\rm st}$ and $2^{\rm nd}$ derivatives.
By the It\^o formula and Lemma \ref{lem:AuxRes_1} we have, with
$u=u(t)$ on the RHS,
\begin{eqnarray*}
d\widetilde{\widetilde{\Lambda^N_\eps}}(u(t))&=&\frac{\lb
\tilde{A}_N^\prime u,u\rb}{|u|^2+\eps}\,dt+\frac{2\lb
\tilde{A}_Nu,du\rb}{|u|^2+\eps}-\frac{2\lb \tilde{A}_Nu,u\rb\lb
u,du\rb}{(|u|^2+\eps)^2}\nonumber\\
&+&\suml_{k=1}^n\left(\frac{\lb
\tilde{A}_NB_ku,B_ku\rb}{|u|^2+\eps}-\frac{4\lb
\tilde{A}_Nu,B_ku\rb\lb u,B_ku\rb}{(|u|^2+\eps)^2}\right.\nonumber\\
&-&\left.\frac{\lb
\tilde{A}_Nu,u\rb|B_ku|^2}{(|u|^2+\eps)^2}+\frac{4\lb
\tilde{A}_Nu,u\rb\lb u,B_ku\rb^2}{(|u|^2+\eps)^3}\right)\,dt
\end{eqnarray*}
Because $u$ is a solution of problem \eqref{eqn-1.1} we have,
still with $u=u(t)$,
\begin{eqnarray}
d\widetilde{\widetilde{\Lambda^N_\eps}}(u(t))&=&-\frac{2\lb \tilde{A}_Nu,Au+F(t,u)\rb}{|u|^2+\eps}\,dt+\frac{2\lb \tilde{A}_Nu,u\rb\lb u,Au+F(t,u)\rb}{(|u|^2+\eps)^2}\,dt\nonumber\\
&+&\frac{\lb \tilde{A}_N^\prime
u,u\rb}{|u|^2+\eps}\,dt+\suml_{k=1}^n\frac{\lb
\tilde{A}_NB_ku,B_ku\rb}{|u|^2+\eps}\,dt-\suml_{k=1}^n\frac{\lb
\tilde{A}_Nu,u\rb|B_ku|^2}{(|u|^2+\eps)^2}\,dt
\label{eqn:eqn-1'}\\
&+&\big(\suml_{k=1}^n\frac{2\lb \tilde{A}_Nu,u\rb\lb
u,B_ku\rb}{(|u|^2+\eps)^2}-\suml_{k=1}^n\frac{2\lb
\tilde{A}_Nu,B_ku\rb}{|u|^2+\eps}\big)(\, dw_t^k+2\frac{\lb
u,B_ku\rb}{|u|^2+\eps}\,dt)\,\nonumber
\end{eqnarray}
Therefore,
\begin{eqnarray}
d(M_\eps(t)\widetilde{\widetilde{\Lambda^N_\eps}}(u(t)))&=&M_\eps(t)(-\frac{2\lb \tilde{A}_Nu,Au+F(t,u)\rb}{|u|^2+\eps}\,dt+\frac{2\lb \tilde{A}_Nu,u\rb\lb u,Au+F(t,u)\rb}{(|u|^2+\eps)^2}\,dt\nonumber\\
&+&\frac{\lb \tilde{A}_N^\prime u,u\rb}{|u|^2+\eps}\,dt+\suml_{k=1}^n\frac{\lb \tilde{A}_NB_ku,B_ku\rb}{|u|^2+\eps}\,dt-\suml_{k=1}^n\frac{\lb \tilde{A}_Nu,u\rb|B_ku|^2}{(|u|^2+\eps)^2}\,ds\label{eqn:eqn-1}\\
&-&\suml_{k=1}^n\frac{2\lb \tilde{A}_Nu,B_ku\rb}{|u|^2+\eps}\,
dw_t^k).\nonumber
\end{eqnarray}
The above equality \eqref{eqn:eqn-1} can be rewritten as
\begin{eqnarray}
\label{eqn:eqn-1.2}
d(M_\eps(t)\widetilde{\widetilde{\Lambda^N_\eps}}(u(t)))&=&M_\eps(s)\Big(\frac{\lb
\tilde{A}_Nu,u\rb(2\lb
Au,u\rb-\suml_{k=1}^n|B_ku|^2)}{(|u|^2+\eps)^2} +\frac{\lb
\tilde{A}_N^\prime u,u\rb}{|u|^2+\eps}
\nonumber\\
&-&\frac{2\lb
\tilde{A}_Nu,Au\rb}{|u|^2+\eps}+\suml_{k=1}^n\frac{\lb
\tilde{A}_NB_ku,B_ku\rb}{|u|^2+\eps}\Big)\,ds +
M_\eps(s)\Big(\frac{2\lb \tilde{A}_Nu,u\rb\lb
F(s,u),u\rb}{(|u|^2\eps)^2}\\
&-&\frac{2\lb
\tilde{A}_Nu,F(s,u)\rb}{|u|^2+\eps}\Big)\,ds-M_\eps(s)\suml_{k=1}^n\frac{2\lb
\tilde{A}_Nu,B_ku\rb}{|u|^2+\eps}\, dw^k(s)\nonumber\\
&=&M_\eps(s)\left(\frac{2\lb \tilde{A}_Nu,u\rb\lb
\tilde{A}u,u\rb}{(|u|^2+\eps)^2} -\frac{2\lb
\tilde{A}_Nu,Au\rb}{|u|^2+\eps}\right.\nonumber
\\
&+&\frac{2\lb \tilde{A}_Nu,u\rb\lb
F(s,u),u\rb}{(|u|^2+\eps)^2}-\frac{2\lb
\tilde{A}_Nu,F(s,u)\rb}{|u|^2+\eps}+\frac{\lb \tilde{A}_N^\prime
u,u\rb}{|u|^2+\eps}\nonumber
\\
&+&\left.\suml_{k=1}^n\frac{\lb
\tilde{A}_NB_ku,B_ku\rb}{|u|^2+\eps}\right)\,ds
-M_\eps(s)\suml_{k=1}^n\frac{2\lb
\tilde{A}_Nu,B_ku\rb}{|u|^2+\eps}\, dw^k(s).
 \nonumber
\end{eqnarray}
The drift term on the right hand side of \eqref{eqn:eqn-1.2} can
be written as:
\begin{eqnarray}
\label{eqn:drift-1}
&&M_\eps\Big(2\widetilde{\widetilde{\Lambda^N_\eps}}\frac{\lb
\tilde{A}u,u\rb}{|u|^2+\eps}-2\frac{\lb
\tilde{A}_Nu,Au\rb}{|u|^2+\eps}+
2\widetilde{\widetilde{\Lambda^N_\eps}}\frac{\lb F(t,u),u\rb}{|u|^2+\eps}-2\frac{\lb \tilde{A}_Nu,F(t,u)\rb}{|u|^2+\eps}+\frac{\lb C_Nu,u\rb}{|u|^2+\eps}\Big)\\
&=&M_\eps\Big(-2\frac{\lb
\tilde{A}_N-\widetilde{\widetilde{\Lambda^N_\eps}}u,\tilde{A}u\rb}{|u|^2+\eps}
-2\frac{\lb
\tilde{A}_N-\widetilde{\widetilde{\Lambda^N_\eps}}u,F(t,u)\rb}{|u|^2+\eps}+\frac{\lb
C_Nu,u\rb}{|u|^2+\eps}\Big), \nonumber
\end{eqnarray}
where
$C_N=\suml_{k=1}^nB_k^*[\tilde{A}_N,B_k]+\tilde{A}_N^\prime(\cdot)$.
The first term on the right hand side \eqref{eqn:drift-1} can be
rewritten as:
\begin{eqnarray}
-2M_\eps\frac{\lb
\tilde{A}_N-\widetilde{\widetilde{\Lambda^N_\eps}}u,\tilde{A}u\rb}{|u|^2+\eps}
&=&M_\eps\Big(-2\frac{|\tilde{A}_N-\widetilde{\widetilde{\Lambda^N_\eps}}u|^2}{|u|^2+\eps}-
2\frac{\lb \tilde{A}_N-\widetilde{\widetilde{\Lambda^N_\eps}}u,(\tilde{A}-\tilde{A}_N)u\rb}{|u|^2+\eps}\nonumber\\
&-&2\frac{\lb
\tilde{A}_N-\widetilde{\widetilde{\Lambda^N_\eps}}u,\widetilde{\widetilde{\Lambda^N_\eps}}u\rb}{|u|^2+\eps}\Big)\nonumber\\
&=&
M_\eps\Big(-2\frac{|\tilde{A}_N-\widetilde{\widetilde{\Lambda^N_\eps}}u|^2}{|u|^2+\eps}-
2\frac{\eps(\widetilde{\widetilde{\Lambda^N_\eps}})^2}{|u|^2+\eps}-
2\frac{\lb
\tilde{A}_N-\widetilde{\widetilde{\Lambda^N_\eps}}u,(\tilde{A}-\tilde{A}_N)u\rb}{|u|^2+\eps}\Big).\nonumber
\end{eqnarray}
Therefore we have
\begin{eqnarray}
&&M_\eps(t)\widetilde{\widetilde{\Lambda^N_\eps}}(u(t))+2\intl_{0}^tM_\eps(s)\frac{|\tilde{A}_N-\widetilde{\widetilde{\Lambda^N_\eps}}u|^2}{|u|^2+\eps}\,ds+
2\eps\intl_{0}^tM_\eps(s)\frac{(\widetilde{\widetilde{\Lambda^N_\eps}})^2}{|u|^2+\eps}\,ds\nonumber\\
&=&M_\eps(0)\widetilde{\widetilde{\Lambda^N_\eps}}(0)-
2\intl_{0}^tM_\eps(s)\frac{\lb
(\tilde{A}_N-\widetilde{\widetilde{\Lambda^N_\eps}})u,(\tilde{A}-\tilde{A}_N)u\rb}{|u|^2+\eps}\,ds-
2\intl_{0}^tM_\eps(s)\frac{\lb (\tilde{A}_N-\widetilde{\widetilde{\Lambda^N_\eps}})u,F(s,u)\rb}{|u|^2+\eps}\,ds\nonumber\\
&+&\intl_{0}^tM_\eps(s)\frac{\lb
C_Nu,u\rb}{|u|^2+\eps}\,ds-\intl_{0}^tM_\eps(s)\suml_{k=1}^n\frac{2\lb
\tilde{A}_Nu,B_ku\rb}{|u|^2+\eps}\, dw^k(s)
=(i)+(ii)+(iii)+(iv)+(v).
 \nonumber
\end{eqnarray}
Next  we shall deal with  estimating the term  \textit{(ii)}
above. By the Young inequality we have, with $\eps_1<\frac12$,
\begin{eqnarray*}
\intl_{0}^tM_\eps(s)\frac{\lb
\tilde{A}_N-\widetilde{\widetilde{\Lambda^N_\eps}}u,(\tilde{A}-\tilde{A}_N)u\rb}{|u|^2+\eps}\,ds
&\leq &
\eps_1\intl_{0}^tM_\eps(s)\frac{|\tilde{A}_N-\widetilde{\widetilde{\Lambda^N_\eps}}u|^2}{|u|^2+\eps}\,ds\\
&+&\frac{C}{\eps_1}\intl_{0}^tM_\eps(s)\frac{|(\tilde{A}-\tilde{A}_N)u|^2}{|u|^2+\eps}\,ds.
\end{eqnarray*}
A similar method can be applied to the term \textit{(iii)}. As a
result, we get
\begin{eqnarray}\label{eqn:LambdaEstim-1}
M_\eps(t)\widetilde{\widetilde{\Lambda^N_\eps}}(u(t))&+&\intl_{0}^tM_\eps(s)\frac{|\tilde{A}_N-\widetilde{\widetilde{\Lambda^N_\eps}}u|^2}{|u|^2+\eps}\,ds
+2\eps\intl_{0}^tM_\eps(s)\frac{(\widetilde{\widetilde{\Lambda^N_\eps}})^2}{|u|^2+\eps}
\nonumber\\
&\leq&  M_\eps(0)\widetilde{\widetilde{\Lambda^N_\eps}}(0)
+\frac{C}{\eps_1}\intl_{0}^tM_\eps(s)\frac{|(\tilde{A}-\tilde{A}_N)u|^2}{|u|^2+\eps}\,ds
+\frac{C}{\eps_2}\intl_{0}^tM_\eps(s)\frac{|F(s,u)|^2}{|u|^2+\eps}\,ds\\
&+&\intl_{0}^tM_\eps(s)\frac{\lb
C_Nu,u\rb}{|u|^2+\eps}\,ds-\intl_{0}^tM_\eps(s)\suml_{k=1}^n\frac{2\lb
\tilde{A}_Nu,B_ku\rb}{|u|^2+\eps}\, dw^k(s).
\nonumber\end{eqnarray} Let us estimate the term \textit{(iv)}. By
the assumption \textbf{AC4} and the definition of operator $C_N$
we have the following chain of inequalities:
\begin{multline*}
|\lb C_Nx,x\rb|\leq |\lb
Cx,x\rb|+|\tilde{A}_N^\prime(\cdot)|_{\mathcal{L}(V,V^\prime)}\Vert
x\Vert^2+|\suml_k\lb (\tilde{A}_N-\tilde{A})B_kx,B_kx\rb|\\
+ |\lb (\tilde{A}_N-\tilde{A})x,\suml_kB_k^*B_kx\rb| \leq
K_1(\cdot)|x|^2+(K_2(\cdot)+|\tilde{A}^\prime(\cdot)|_{\mathcal{L}(V,V^\prime)})|\lb
\tilde{A}x,x\rb|\\+|\suml_k\lb
\tilde{A}Q_NB_kx,Q_NB_kx\rb|+|(\tilde{A}_N-\tilde{A})x\Vert
x|_{D(\hat{A})} \leq K_1(\cdot)|x|^2\\+
(K_2(\cdot)+|\tilde{A}^\prime(\cdot)|_{\mathcal{L}(V,V^\prime)})|\lb
\tilde{A}x,x\rb|
+|(\tilde{A}_N-\tilde{A})x\Vert x|_{D(\hat{A})}+\suml_k\Vert
Q_NB_kx\Vert^2.
\end{multline*}
where assumption \eqref{eqn:CWeakBoundAss} has been used in second
inequality and assumption \eqref{eqn:AStrongBoundAss} in last
inequality. Therefore
\begin{multline}\label{eqn:LambdaEstim-2}
\intl_{0}^tM_\eps(s)\frac{\lb C_Nu,u\rb}{|u|^2+\eps}\,ds\leq
\intl_{0}^tK_1(s)M_\eps(s)\,ds+
\intl_{0}^t(K_2(s)+|\tilde{A}^\prime(s)|_{\mathcal{L}(V,V^\prime)})M_\eps(s)\widetilde{\widetilde{\Lambda^N_\eps}}(u(s))\,ds\\
+\Big(\intl_{0}^tM_\eps(s)\frac{|(\tilde{A}_N-\tilde{A})u|^2}{|u|^2+\eps}\,ds\Big)^{1/2}
\Big(\intl_{0}^tM_\eps(s)\frac{|\tilde{A}u|^2}{|u|^2+\eps}\,ds\Big)^{1/2}+
\intl_{0}^tM_\eps(s)\frac{\suml_k\Vert
Q_NB_ku\Vert^2}{|u|^2+\eps}\,ds.
\end{multline}
We will denote
\begin{eqnarray*}
K_3(\eps,N)&=&\intl_{0}^tM_\eps(s)\frac{|(\tilde{A}_N-\tilde{A})u|^2}{|u|^2+\eps}\,ds,\\
K_4(\eps,N)&=&\intl_{0}^tM_\eps(s)\frac{\suml_k\Vert
Q_NB_ku\Vert^2}{|u|^2+\eps}\,ds,\\
K_5(\eps)&=&\intl_{0}^tM_\eps(s)\frac{|\tilde{A}u|^2}{|u|^2+\eps}\,ds,\\
K_6(s)&=&|\tilde{A}^\prime(s)|_{\mathcal{L}(V,V^\prime)}.
\end{eqnarray*}
 Combining
\eqref{eqn:LambdaEstim-1}, \eqref{eqn:LambdaEstim-2} with the
assumption \eqref{eqn:NonlinearityAss} we get
\begin{multline}\label{eqn:ineq-1}
M_\eps(t)\widetilde{\widetilde{\Lambda^N_\eps}}(u(t))+\intl_{0}^tM_\eps(s)\frac{|\tilde{A}_N-\widetilde{\widetilde{\Lambda^N_\eps}}u|^2}{|u|^2+\eps}\,ds+
2\eps\intl_{0}^tM_\eps(s)\frac{(\widetilde{\widetilde{\Lambda^N_\eps}})^2}{|u|^2+\eps}\\
\leq
M_\eps(0)\widetilde{\widetilde{\Lambda^N_\eps}}(0)+\intl_{0}^tK_1(s)M_\eps(s)\,ds+
\frac{C}{\eps_1}K_3(\eps,N)+(K_3(\eps,N)K_5(\eps))^{1/2}+K_4(\eps,N)\\
+\frac{C}{\eps_2}\intl_{0}^t(n^2(s)+K_2(s)+K_6(s))M_\eps(s)\widetilde{\widetilde{\Lambda^N_\eps}}(u(s))\,ds-\intl_{0}^tM_\eps(s)\suml_{k=1}^n\frac{2\lb
\tilde{A}_Nu,B_ku\rb}{|u|^2+\eps}\, dw^k(s)
\end{multline}

By assumption \eqref{eqn:SolutionAss-1'} we can find
$\Omega_5\subset\Omega$, $\mathbb{P}(\Omega_5)=1$ such that
\begin{equation}
K_5(\eps)<\infty,u\in L^2(0,T;D(\hat{A})), \omega\in
\Omega_5.\label{eqn:AuxConv-5.1}
\end{equation}
By Lemma \ref{lem:AuxRes_2} we can find
$\Omega^\prime\subset\Omega$, $\mathbb{P}(\Omega^\prime)=1$ and
sequence $\{M_l\}_{l=1}^{\infty}$ such that $\forall
\omega\in\Omega^\prime$, $t\in[0,T]$ 
\begin{eqnarray}
&&\liml_{l\to\infty}K_3(\eps,M_l)=0,
\liml_{l\to\infty}K_4(\eps,M_l)=0,\\
&&\liml_{l\to\infty}\widetilde{\widetilde{\Lambda^{M_l}_\eps}}(u(t))=\widetilde{\widetilde{\Lambda}}_{\eps}(t),\nonumber\\
&&\liml_{l\to\infty}\intl_0^tM_\eps(s)\suml_{k=1}^n\frac{<\tilde{A}_{M_l}u(s),B_ku(s)>}{|u(s)|_H^2+\eps}dw^k(s)
=\intl_0^tM_\eps(s)\suml_{k=1}^n\frac{<\tilde{A}u(s),B_ku(s)>}{|u(s)|_H^2+\eps}dw^k(s).\nonumber
\end{eqnarray}
Therefore, applying \eqref{eqn:ineq-1} with subsequence
$\{M_l\}_{l=1}^{\infty}$ and tending $l\to \infty$ in equality
\eqref{eqn:ineq-1} we infer that
\begin{eqnarray}\label{eqn:ineq-2}
M_\eps(t)\widetilde{\widetilde{\Lambda}}_{\eps}(t)&+&\intl_{0}^tM_\eps(s)\frac{|(\tilde{A}-\widetilde{\widetilde{\Lambda}}_{\eps})u|^2}{|u|^2+\eps}\,ds+
2\eps\intl_{0}^tM_\eps(s)\frac{(\widetilde{\widetilde{\Lambda}}_{\eps})^2}{|u|^2+\eps}\,ds
\nn\\
&\leq&
M_\eps(0)\widetilde{\widetilde{\Lambda}}_{\eps}(0)+\intl_{0}^tK_1(s)M_\eps(s)\,ds
+\frac{C}{\eps_2}\intl_{0}^t(n^2(s)+K_2(s)\\
&+&
K_6(s))M_\eps(s)\widetilde{\widetilde{\Lambda}}_{\eps}(u(s))\,ds
-\intl_{0}^tM_\eps(s)\suml_{k=1}^n\frac{2\lb
\tilde{A}u,B_ku\rb}{|u|^2+\eps}\, dw^k(s)
\nn,\omega\in\Omega_5\cap\Omega^\prime .
\end{eqnarray}
Let $X_\eps=(X_{\eps}(t))_{t\geq 0}$--solution of the following
SDE
\begin{eqnarray}\label{eqn:compl-1}
X_{\eps}(t)&=&M_\eps(0)\widetilde{\widetilde{\Lambda}}_{\eps}(0)+\intl_{0}^tK_1(s)M_\eps(s)\,ds+
\frac{C}{\eps_2}\intl_{0}^t(n^2(s)+K_2(s)+K_6(s))X_{\eps}(s)\,ds\nonumber\\
&-&\suml_{k=1}^n\intl_{0}^tM_\eps(s)\frac{2\lb
\tilde{A}u,B_ku\rb}{|u|^2+\eps}\, dw^k(s)
\end{eqnarray}
Then, we have that a.a.
\begin{equation}
M_\eps(t)\widetilde{\widetilde{\Lambda}}_{\eps}(t)+\intl_{0}^tM_\eps(s)\frac{|(\tilde{A}-\widetilde{\widetilde{\Lambda}}_{\eps})u|^2}{|u|^2+\eps}\,ds\leq
X_{\eps}(t).\label{eqn:LambdaEstim-3}
\end{equation}
Indeed, it is enough to subtract from the inequality
\eqref{eqn:ineq-2} the identity \eqref{eqn:compl-1} and then use
the Gronwall Lemma. On the other hand equation \eqref{eqn:compl-1}
can be solved explicitly. It's unique  solution is given the
following formula explicitly and we have
\begin{eqnarray}
X_{\eps}(t)&=&M_\eps(0)\widetilde{\widetilde{\Lambda}}_{\eps}(0)e^{\intl_{0}^t(n^2(s)+K_2(s)+K_6(s))\,ds}\\
\nonumber
&+&\intl_{0}^te^{\intl_s^t(n^2(\tau)+K_2(\tau)+K_6(\tau))d\tau}\, K_1(s)M_\eps(s)\,ds\label{eqn:X_eps}\\
\nonumber &-&\suml_{k=1}^n
\intl_{0}^te^{\intl_s^t(n^2(\tau)+K_2(\tau)+K_6(\tau))d\tau}
M_\eps(s)\frac{2\lb \tilde{A}u,B_ku\rb}{|u|^2+\eps}\, dw^k(s).
\end{eqnarray}
Denote
\begin{equation}\label{eqn_def_L^eps}
L_{\eps}(t)=\suml_{k=1}^n\intl_{0}^te^{\intl_s^t(n^2(\tau)+K_2(\tau)+K_6(\tau))d\tau}M_\eps(s)\frac{2\lb
\tilde{A}u,B_ku\rb}{|u|^2+\eps}\, dw^k(s), \; t\geq 0.
\end{equation}
Let us show that this definition is correct. It is enough to show
that a.a.
\begin{equation*}
\suml_{k=1}^n\intl_{0}^te^{2\intl_s^t(n^2(\tau)+K_2(\tau)+K_6(\tau))d\tau}|M_\eps(s)|^2\frac{2\lb
\tilde{A}u,B_ku\rb^2}{(|u|^2+\eps)^2}\,ds<\infty.
\end{equation*}
We have
\begin{eqnarray}
\suml_{k=1}^n\intl_{0}^te^{2\intl_s^t(n^2(\tau)+K_2(\tau)+K_6(\tau))d\tau}(M_\eps(s))^2\frac{2\lb
\tilde{A}u,B_ku\rb^2}{(|u|^2+\eps)^2}\,ds \leq
e^{2\intl_{0}^t(n^2(\tau)+K_2(\tau)+K_6(\tau))d\tau}\nonumber\\
\times\supl_{s\leq t}|M_\eps(s)|^2\frac{1}{\eps^2}\supl_{s\leq
t}\suml_{k=1}^n|B_k u(s)|^2\intl_{0}^t|\tilde{A}u(s)|^2\,ds \leq
Ce^{2\intl_{0}^t(n^2(\tau)+K_2(\tau)+K_6(\tau))d\tau}\nonumber\\
\times\supl_{s\leq t}|M_\eps(s)|^2\frac{1}{\eps^2}\supl_{s\leq
t}(\Vert u(s)\Vert^2+|u(s)|^2)\intl_{0}^t|\tilde{A}u(s)|^2\,ds,
\end{eqnarray}
and the result follows from assumptions \eqref{eqn:SolutionAss-1'}
and \eqref{eqn:SolutionAss-2}.

Let us notice that formula \eqref{eqn:X_eps} can be rewritten as
follows
\begin{eqnarray}\label{eqn:X_1-1}
X_{\eps}(t)&=&M_\eps(0)\widetilde{\widetilde{\Lambda}}_{\eps}(0)e^{\intl_{0}^t(n^2(s)+K_2(s)+K_6(s))\,ds}\\
&+&\intl_{0}^te^{\intl_s^t(n^2(\tau)+K_2(\tau)+K_6(\tau))d\tau}K_1(s)M_\eps(s)\,ds-L_{\eps}(t),\,t\geq
0. \nonumber
\end{eqnarray}
By the definition \eqref{eqn_def_L^eps}, the process $L^{\eps}$ is
a local martingale. We will show that in our assumptions it is
martingale. By proposition \ref{prop:MartingaleCrit} it is enough
to show that there exists $\delta>0$, $K<\infty$
$$
\mathbb{E}|L_{\tau}^{\eps}|^{1+\delta}\leq K,\,\, \tau\in[0,T].
$$
Let $p_i$, $i=1,2,3,4$ be real numbers such that
$\suml_{i=1}^4\frac{1}{p_i}=1$, $p_i>1$, $i=1,2,3,4$. By
Burkholder and H{\"o}lder inequalities we infer that for $t\geq
0$,
\begin{eqnarray*}
\mathbb{E}|L_{\eps}(t)|^{1+\delta} &\leq &
C\mathbb{E}\left(\intl_{0}^te^{2\intl_s^t(n^2+K_2+K_6)d\tau}
(M_\eps(s))^2\suml_{k=1}^n\frac{|\tilde{A}u|^2|B_ku|^2}{(|u|^2+\eps)^2}\,ds\right)^{(1+\delta)/2}
\\
&\leq&
\frac{C}{\eps^{1+\delta}}\mathbb{E}\Big[e^{(1+\delta)\intl_{0}^t(n^2+K_2+K_6)d\tau}\supl_{s\in
[0,t]}(M_\eps(s))^{(1+\delta)}
\\
&\times & \supl_{s\in
[0,t]}\suml_{k=1}^n|B_ku(s)|^{(1+\delta)}(\intl_0^t|\tilde{A}u|^2\,ds)^{(1+\delta)/2}\Big]
\\
&\leq&
C(\eps)\left(\mathbb{E}e^{(1+\delta)p_1\intl_{0}^t(n^2+K_2+K_6)d\tau}\right)^{\frac{1}{p_1}}
(\mathbb{E}\supl_{s\in
[0,t]}(M_\eps(s))^{(1+\delta)p_2})^{\frac{1}{p_2}}
\\
&\times & (\mathbb{E}\supl_{s\in
[0,t]}\suml_{k=1}^n|B_ku(s)|^{(1+\delta)p_3})^{\frac{1}{p_3}}
(\mathbb{E}(\intl_{0}^t|\tilde{A}u|^2\,ds)^{(1+\delta)p_4/2})^{\frac{1}{p_4}}=:S.
\end{eqnarray*}
Choose $\delta\in
(0,\frac{\delta_0\kappa-(4+2\delta_0)}{4+2\delta_0+\kappa(4+\delta_0)})$.
With a special choice of exponents $p_i,i=1,\ldots,4$:
\begin{equation*}
p_1=\frac{\kappa}{1+\delta}, p_3=\frac{2+\delta_0}{1+\delta},
p_4=\frac{2}{1+\delta},
p_2=1/\left(1-(\frac{1}{p_1}+\frac{1}{p_3}+\frac{1}{p_4})\right),
\end{equation*}
we obtain
\begin{eqnarray}
S&=&C(\eps)\left(\mathbb{E}e^{\kappa\intl_{0}^t(n^2+K_2+K_6)d\tau}\right)^{\frac{1}{p_1}}
\left(\mathbb{E}\supl_{s\in
[0,t]}M_\eps(s)^{(1+\delta)p_2}\right)^{\frac{1}{p_2}}\nonumber
\\
&\times & \left(\mathbb{E}\supl_{s\in
[0,t]}\suml_{k=1}^n|B_ku(s)|^{2+\delta_0}\right)^{\frac{1}{p_3}}
\left(\mathbb{E}\intl_{0}^t|\tilde{A}u|^2\,ds\right)^{\frac{1}{p_4}}.\label{eqn:AuxEquality-1.1}
\end{eqnarray}
Notice that by Assumption \eqref{eqn:NonlinearityAss-n} the first
factor in the product \eqref{eqn:AuxEquality-1.1} is finite.
Furthermore, by Doob inequality and Assumption
\eqref{eqn:B_kWeakBoundAss} the second factor is finite. The third
factor is finite by Assumptions \eqref{eqn:CoercivAss},
\eqref{eqn:AStrongBoundAss} and \eqref{eqn:SolutionAss-2}. The
last factor in the product \eqref{eqn:AuxEquality-1.1} is finite
by Assumption \eqref{eqn:SolutionAss-1'}. Thus, $S<\infty$ and
hence, the process $L^{\eps}$ is martingale. In particular,
$\mathbb{E}L_{\eps}(t)=0$ for all $t\geq 0$.

It follows from \eqref{eqn:X_1-1}, \eqref{eqn:LambdaEstim-3} and
the
H{\"o}lder 
inequality that
\begin{eqnarray}
\mathbb{E}M_\eps(t)\widetilde{\widetilde{\Lambda}}_{\eps}(t) &\leq
&\mathbb{E}X_{\eps}(t)\leq C(
\mathbb{E}\widetilde{\widetilde{\Lambda}}_{\eps}(0)^{1+\delta_0}+t\Vert
K_1\Vert_{L^2(0,T)})\mathbb{E}e^{\kappa(\delta_0)\intl_{0}^tn^2(s)\,ds}\nonumber\\
&\leq & C\left(\frac{\mathbb{E}\Vert
u(0)\Vert^{1+\delta_0}}{c^{1+\delta_0}}+t\Vert
K_1\Vert_{L^2(0,T)}\right)\mathbb{E}e^{\kappa(\delta_0)\intl_{0}^tn^2(s)\,ds}.
\label{eqn:epsbound-2}
\end{eqnarray}
Therefore, we get the estimate \eqref{eqn:WidetildeLambdaEst} from
\eqref{eqn:epsbound-2} and \eqref{eqn:SolutionAss-2}. Hence the
proof of Lemma \ref{lem_est} is complete.
\end{proof}
\begin{proof}[Proof of Theorem
\ref{thm:BackwardUniqueness}] As mentioned earlier completion of
the proof of Lemma \ref{lem_est} also completes the proof of the
Theorem.
\end{proof}
\begin{proof}[Proof of Theorem \ref{thm:NonlinearBackwardUniqueness}]

We will argue by contradiction. Suppose that the assertion of the
Theorem is not true. Then, because the process $u$ is adapted, we
will be able to find   $t_0\in[0,T)$, an event
$R\in\mathcal{F}_{t_0} $ and  a constant $c>0$ such that
$\mathbb{P}(R)>0$ and
\begin{equation}
|u(t_0,\omega)|\geq c>0,\; \omega\in R.\label{eqn:NLowerBoundSol}
\end{equation}

Without loss of generality we can  assume  that $\mathbb{P}(R)=1$
and $t_0=0$. Otherwise, we can consider instead of measure
$\mathbb{P}$ the conditional measure
$\mathbb{P}_R:=\frac{\mathbb{P}(\cdot\cap R)}{\mathbb{P}(R)}$.

Suppose  that there exists a constant $c>0$ such that
$$|u(t)|^2\geq c, \; \mbox{for all } t\in [0,T].$$
Then, by taking $t=T$, we infer that $|u(T)|^2>0$ what is a clear
contradiction with the assumption that  $u(T)=0$,
$\mathbb{P}$-a.s..

Now we shall  prove that such a constant exists.

Let $\phi_{r}:\mathbb{R}\to \mathbb{R}$, $r>0$ a mollifying
function such that $\phi_r\in C_b^{\infty}(\mathbb{R})$ and
\begin{equation*}
    \phi_r(x)=\left\{\begin{array}{rcl}
    && 1,\mbox{ if }|x|\geq r\\
    &&0\leq \phi_r(x)\leq 1,\mbox{ if }r/2< |x|< r\\
    &&0,\mbox{ if }|x|\geq r/2
    \end{array}\right.
\end{equation*}

Next let us fix $r>0$ and  define a process $\psi^{r}$ by
\begin{equation}
\psi^{r}(t)=-\frac{1}{2}\phi_{r}(u(t))\log{|u(t)|^2},\;
t\in[0,T].\label{eqn:Psi_Def_2}
\end{equation}
Now for any $r\geq 0$ we define stopping time $\tau_{r}$ as
follows
\begin{equation}
\tau_{r}(\omega)=\inf\{t\in[0,T]:|u(t,\omega)|\leq
r\},\;\omega\in\Omega.\label{eqn:Tau_Def}
\end{equation}
Note that $\tau_{r}$ is
well defined since $u(T)=0$ a.s. and $\tau_{r}\leq
\tau_{\delta}\leq \tau_0\leq T$ if $0\leq\delta\leq r$.

As in the Theorem \ref{thm:BackwardUniqueness} we have the
following result.
\begin{lemma}\label{lem-psi_ito_2}
For every $r>0$ the process  $\psi^{r}$ defined in
\eqref{eqn:Psi_Def_2} is an It\^o process and
\begin{eqnarray}
\psi^{r}(t\wedge\tau_{r})&=&\psi_{r}(0)+\intl_0^{t\wedge\tau_{r}}\suml_{k=1}^n\frac{\lb u,B_ku\rb }{|u|^2}\, dw^k(s) \nonumber\\
+\, \intl_0^{t\wedge\tau_{r}}\Big(\suml_{k=1}^n \frac{\lb
u,B_ku\rb^2}{|u|^4} &+& \frac{\lb
(A-\frac{1}{2}\suml_{k=1}^nB_k^*B_k)u+F(s,u),u\rb}{|u|^4}\Big)\,ds,\;t\geq
0.\label{eqn:NAux-2}
\end{eqnarray}
\end{lemma}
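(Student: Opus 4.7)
The plan is to parallel the proof of Lemma \ref{lem-psi_ito} step-by-step, with the additive regularization $\varepsilon$ replaced by the multiplicative cutoff $\phi_r$, and then to evaluate the resulting It\^o identity at the stopping time $\tau_r$, before which the cutoff is identically $1$.

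First I would introduce the auxiliary map $R_r:H\to\mathbb{R}$ defined by $R_r(x)=\phi_r(|x|)\log |x|^2$, with $R_r(0):=0$. The key observation is that $\phi_r$ vanishes on $\{|x|\leq r/2\}$, which removes the singularity of $\log|x|^2$ at the origin; on $\{|x|\geq r\}$ one has $\phi_r\equiv 1$ with $\phi_r',\phi_r''\equiv 0$, and the derivatives of $\log|x|^2$ are controlled by $C/|x|\leq 2C/r$; on the annulus $\{r/2\leq|x|\leq r\}$ where $\phi_r'$ is active, boundedness is immediate by compactness. A direct computation of $R_r'(x)$ and $R_r''(x)$ by the Fr\'echet product rule then shows that both are bounded maps on $H$. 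With this in hand, the hypotheses (i)--(v) of \cite[Theorem 1.2]{[Pardoux_1979]} for the function $R_r$ and the process $u$ are verified exactly as in the proof of Lemma \ref{lem-psi_ito}; the continuity of $H\ni x\mapsto \operatorname{Tr}(Q\circ R_r''(x))\in\mathbb{R}$ for $Q\in\mathcal{T}_1(H)$ is also straightforward from the explicit formula for $R_r''$. Pardoux's theorem then yields that $R_r(u(t))$, and hence $\psi^r(t)=-\tfrac12 R_r(u(t))$, is an It\^o process whose drift and diffusion are given by the chain rule, with one group of terms coming from $\log|u|^2$ and an extra group from differentiating $\phi_r$.

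Next I would localize at $\tau_r$. By continuity of $|u(\cdot)|$ together with the definition \eqref{eqn:Tau_Def}, we have $|u(t)|\geq r$ on $[0,\tau_r]$, so $\phi_r(u(t))\equiv 1$ and $\phi_r'(u(t))=\phi_r''(u(t))=0$ on this random interval. Consequently every term in the It\^o expansion of $R_r(u(t))$ that involves a derivative of $\phi_r$ integrates to zero when evaluated on $[0,t\wedge\tau_r]$, and the remaining contribution is exactly the It\^o expansion of $-\tfrac12\log|u(t)|^2$, obtained from the chain rule applied to $g(x)=-\tfrac12\log|x|^2$ combined with the SPDE \eqref{eqn-1.1}. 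This computation is identical in form to the one leading to \eqref{eqn:Aux-2} in the proof of Lemma \ref{lem-psi_ito}, carried out with $\varepsilon=0$, which is permissible precisely because $|u|\geq r>0$ on $[0,t\wedge\tau_r]$ so none of the denominators $|u|^{2}$ or $|u|^{4}$ can vanish. Rearranging produces \eqref{eqn:NAux-2}.

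The main technical obstacle is the justification that Pardoux's theorem applies to $R_r$: although the argument parallels Lemma \ref{lem-psi_ito}, the presence of the factor $\phi_r(|x|)$ introduces cross terms in $R_r'$ and $R_r''$ whose estimation requires the zonal case analysis described above. Once these bounds are in hand, the remaining steps are essentially bookkeeping, and the stopping time $\tau_r$ plays a purely notational role in collapsing the $\phi_r$-derivative terms.
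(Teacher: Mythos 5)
Your proposal is correct and takes essentially the route the paper intends: the paper offers no separate proof of this lemma, merely asserting it holds ``as in Theorem \ref{thm:BackwardUniqueness}'', i.e.\ by rerunning the Pardoux--It\^o argument of Lemma \ref{lem-psi_ito}, and your cutoff-function-plus-stopping-time argument (with the zonal boundedness check for the derivatives of $\phi_r(|x|)\log|x|^2$ and the observation that all $\phi_r$-derivative terms vanish on $[0,t\wedge\tau_r]$) is exactly the natural way to make that precise. One minor remark: your computation in fact produces the denominator $|u|^2$ rather than $|u|^4$ in the final drift term, which reproduces the same typographical slip already present in \eqref{eqn:Aux-2}, so this is a defect of the stated formula rather than of your argument.
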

Combining equality \eqref{eqn:Aux-2} and definition
\ref{eqn:LA_eps_F_def} we infer that
\begin{equation}\label{eqn:NDerivPsi}
\psi^{r}(t\wedge
\tau_{r})=\psi_{r}(0)+\intl_0^{t\wedge\tau_{r}}\widetilde{\Lambda}^F(s,u(s))\,ds+\intl_0^{t\wedge\tau_{r}}\suml_{k=1}^n\frac{\lb
u,B_ku\rb}{|u|^2}\, dw^k(s)
\end{equation}

Suppose for the time being, that the following result is true.
\begin{lemma}\label{lem_est_N}
In the above framework we have, $\mathbb{P}$-a.s.
$$\intl_0^{\tau_0}\widetilde{\Lambda}^F(s,u(s))ds<\infty.$$
\end{lemma}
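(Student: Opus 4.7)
The plan is to adapt the argument from Lemma \ref{lem_est} to work pathwise, exploiting assumption \textbf{(AC7)} and the vanishing of $K_1$ in \textbf{(AC4)} to dispense with the need for an exponential moment on $n$. I would first fix $r>0$ and work on the stochastic interval $[0,\tau_r]$ on which $|u(t)|\geq r/2$, so that division by $|u|^2$ is safe and no mollification of the logarithmic type is required. Since we are not going to change probability measure, I set $M_\eps\equiv 1$ in the computation leading to \eqref{eqn:eqn-1.2}, and apply It\^o's formula to the finite dimensional approximants $\widetilde{\widetilde{\Lambda^N_0}}(u)=\langle \tilde{A}_Nu,u\rangle/|u|^2$ on $[0,t\wedge\tau_r]$.

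Next, using Lemma \ref{lem:AuxRes_2} (whose $\varepsilon$-factors are harmless on $[0,\tau_r]$ because of the pathwise lower bound $|u|\geq r/2$, or by a minor variant proved along the same lines, localized in $N=M_l$), I would pass to the limit along the subsequence $\{M_l\}$ and, after using \textbf{(AC4)} with $K_1=0$ to control the $\lb C_Nu,u\rb/|u|^2$ term by $K_2(s)\widetilde{\widetilde{\Lambda}}(u(s))$ up to a vanishing error, obtain the pathwise inequality
\begin{align*}
\widetilde{\widetilde{\Lambda}}(u(t\wedge\tau_r)) &\leq \widetilde{\widetilde{\Lambda}}(u(0)) + C\intl_0^{t\wedge\tau_r}\bigl(n^2(s)+K_2(s)+K_6(s)\bigr)\widetilde{\widetilde{\Lambda}}(u(s))\,ds \\
&\quad - 2\suml_{k=1}^n\intl_0^{t\wedge\tau_r}\frac{\lb \tilde{A}u(s),B_ku(s)\rb}{|u(s)|^2}\,dw^k(s),
\end{align*}
valid on a full measure set. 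Applying a stochastic Gronwall / comparison lemma for one-dimensional It\^o processes, $\widetilde{\widetilde{\Lambda}}(u(\cdot\wedge\tau_r))$ is dominated by the unique solution $Y$ of the corresponding linear SDE, which, analogously to \eqref{eqn:X_eps}, admits an explicit representation as an exponential times a stochastic integral.

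The main obstacle is to verify that $Y$, and hence $\widetilde{\widetilde{\Lambda}}(u(\cdot\wedge\tau_r))$, stays $\mathbb{P}$-a.s.\ finite as $r\downarrow 0$; this is exactly where \textbf{(AC7)} is crucial, because by \eqref{eqn:B_kFirstOrder} the quadratic variation of the stochastic integral in the bound is pathwise dominated by $\int_0^{\cdot\wedge\tau_r}\suml_k(C_1^k(s))^2\widetilde{\widetilde{\Lambda}}(u(s))^2\,ds$, so the local martingale part is well-defined on $\{t<\tau_0\}$ without any integrability hypothesis on $n$ beyond \eqref{eqn:NNonlinearityAss-n}. A further localization by the stopping times $\sigma_R=\inf\{t:\int_0^t(n^2+K_2+K_6+\sum_k(C_1^k)^2)\,ds\geq R\}$ turns the local martingale into a genuine martingale on $[0,\sigma_R\wedge\tau_r]$, so that $Y$ is finite a.s., and letting $R\to\infty$ gives a finite pathwise bound $C(\omega)$ for $\widetilde{\widetilde{\Lambda}}(u(\cdot\wedge\tau_r))$ independent of $r$.

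Finally, the contribution of $F$ to $\widetilde{\Lambda}^F$ is handled by the Cauchy--Schwarz inequality exactly as at the start of the proof of Lemma \ref{lem_est}, using \eqref{eqn:NNonlinearityAss} and \textbf{(AC2)} to reduce it to control of $\int_0^{\tau_r}(n^2+\widetilde{\widetilde{\Lambda}})\,ds$. Putting everything together and integrating the pathwise bound on $\widetilde{\widetilde{\Lambda}}$ over $[0,\tau_r]$ yields
$$
\intl_0^{\tau_r}\widetilde{\Lambda}^F(s,u(s))\,ds\leq \tilde C(\omega)<\infty\quad\mathbb{P}\mbox{-a.s.},
$$
with $\tilde C(\omega)$ independent of $r$. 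Since $\tau_r\toup\tau_0$ as $r\downarrow 0$ (because $u$ is continuous in $H$), monotone convergence gives $\intl_0^{\tau_0}\widetilde{\Lambda}^F(s,u(s))\,ds<\infty$ a.s., as required.
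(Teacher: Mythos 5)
Your proposal is correct and follows essentially the same route as the paper: reduce the claim to a pathwise bound on $\widetilde{\widetilde{\Lambda}}$ via Cauchy--Schwarz, derive a linear SDE inequality for it through the finite-dimensional It\^o approximation and Lemma \ref{lem:AuxRes_2}, invoke the comparison theorem for one-dimensional diffusions to obtain a stochastic-exponential bound, and use \textbf{(AC3)} and \textbf{(AC7)} (with $K_1=0$) to see that this exponential is $\mathbb{P}$-a.s.\ finite uniformly on $[0,\tau_0)$. The only (harmless) deviation is that you localize with $\tau_r$ and set $M_\eps\equiv 1$ from the outset, whereas the paper keeps $\eps>0$ and the density $M_\eps$ throughout and removes them only at the end via $\liminf_{\eps\to 0}$ and division by $M_\eps(t)$ in \eqref{eqn:NS_t-11''}.
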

Then it follows from Assumption \textbf{(AC3)} that
$\intl_0^T\suml_{k=1}^n\frac{\lb u,B_ku\rb}{|u|^2}\, dw^k(s)$
exists a.e. and
\begin{equation}\label{eqn:AuxIneq_1}
    \mathbb{E}|\intl_0^T\suml_{k=1}^n\frac{\lb
u,B_ku\rb}{|u|^2+\eps}\,
dw^k(s)|^2\leq\intl_0^T\mathbb{E}\suml_{k=1}^n\frac{|\lb
u,B_ku\rb|^2}{|u|^4}\, ds<\infty.
\end{equation}
Therefore we infer that
\begin{equation*}
\log{|u(t\wedge \tau_{r})|^2}\geq \log{|u(0)|^2}-K,\; t\in [0,T],
\end{equation*}
where $K=\intl_{0}^{\tau_0} \widetilde{\Lambda}^F(s,u(s))\,
ds+\intl_0^T\suml_{k=1}^n\frac{\lb u,B_ku\rb}{|u|^2}\, dw^k(s)$.

Hence
\begin{equation}
|u(t\wedge \tau_{r})|^2 = e^{\log{(|u(t\wedge \tau_{r})|^2)}} \geq
e^{\log{|u(0)|^2}-K}=e^{-K}|u(0)|^2\del{>0}.\label{eqn:NAuxEstimate-1}
\end{equation}
Tend $r$ to $0$. Therefore,
\begin{equation}
|u(\tau_0)|^2\geq
\frac{1}{2}e^{-K}|u(0)|^2>0,\,\label{eqn:NAuxEstimate-3}
\end{equation}
what contradicts our assumption that $u(\tau_0)=0$ and the Theorem
follows. \end{proof}

Hence, it only remains to prove Lemma \ref{lem_est_N}.

\begin{proof}[Proof of Lemma \ref{lem_est_N}]
We have by the assumptions \textbf{(AC2)},
\eqref{eqn:NNonlinearityAss-n}, \eqref{eqn:NNonlinearityAss} the
following chain of inequalities
\begin{multline*}
\intl_{0}^t\widetilde{\Lambda}^F(s,u(s))\,ds\leq\intl_{0}^t\widetilde{\Lambda}(u(s))\,ds
+\intl_{0}^t\frac{n(s)|u|\Vert u\Vert}{|u|^2}\,ds
\\
\leq\intl_{0}^t\widetilde{\Lambda}(u(s))\,ds
+(\intl_{0}^tn^2(s)\,ds)^{1/2}(\intl_{0}^t\frac{|u|^2\Vert
u\Vert^2}{|u|^4}\,ds)^{1/2}
\\
\leq\intl_{0}^t\widetilde{\Lambda}(u(s))\,ds
+C(\intl_{0}^t\frac{\Vert u\Vert^2}{|u|^2}\,ds)^{1/2}
\\
\leq\intl_{0}^t\widetilde{\Lambda}(u(s))\,ds+C(\intl_{0}^t\widetilde{\Lambda}(u(s))-\lambda\,ds)^{1/2}.
\end{multline*}
Therefore, it is enough to estimate from above the term
$\intl_0^{\tau_0}\widetilde{\Lambda}(s,u(s))ds$. Because of the
assumption \eqref{eqn:B_kWeakBoundAss} we have only to consider
the following function
$\intl_0^{\tau_0}\widetilde{\widetilde{\Lambda}}(u(s))ds$, where
$$\widetilde{\widetilde{\Lambda}}(u)=\frac{\lb (A-\frac{1}{2}\suml_{k=1}^nB_k^*B_k)u,u\rb}{|u|^2}=\frac{\lb \tilde{A}u,u\rb}{|u|^2},\;\; u\in V.$$

We will prove that
\begin{equation}
\supl_{t\in[0,\tau_0)}\widetilde{\widetilde{\Lambda}}(u(t))<\infty\mbox{
a.s.}.\label{eqn:NWidetildeLambdaEst}
\end{equation}

Fix $\eps>0$. By the same argument as in the proof of the Theorem
\ref{thm:BackwardUniqueness} we get inequality
\begin{eqnarray}\label{eqn:Nineq-3}
M_\eps(t)\widetilde{\widetilde{\Lambda}}_{\eps}(t)&+&\intl_{\tau}^tM_\eps(s)\frac{|(\tilde{A}-\widetilde{\widetilde{\Lambda}}_{\eps})u|^2}{|u|^2+\eps}\,ds+
2\eps\intl_{\tau}^tM_\eps(s)\frac{(\widetilde{\widetilde{\Lambda}}_{\eps})^2}{|u|^2+\eps}\,ds
\nn\\
&\leq&
M_\eps(0)\widetilde{\widetilde{\Lambda}}_{\eps}(0)+\intl_{\tau}^tK_1(s)M_\eps(s)\,ds
+\frac{C}{\eps_2}\intl_{\tau}^t(n^2(s)+K_2(s)\\
&+& K_6(s))M_\eps(s)\widetilde{\widetilde{\Lambda}}_{\eps}(s)\,ds
-\intl_{\tau}^tM_\eps(s)\suml_{k=1}^n\frac{2\lb
\tilde{A}u,B_ku\rb}{|u|^2+\eps}\, dw^k(s), t\geq\tau.
\nn
\end{eqnarray}
Notice that $K_1(s)=0$, $s\in[0,T]$ by assumptions of the Theorem.
Consequently, for $t\geq\tau$,
\begin{equation}
\label{eqn:NX_tEstim-1}
M_\eps(t)\widetilde{\widetilde{\Lambda}}_{\eps}(t)+\intl_{\tau}^tM_\eps(s)\frac{|(\tilde{A}-\widetilde{\widetilde{\Lambda}}_{\eps})u|^2}{|u|^2+\eps}\,ds\leq
(M_\eps(\tau)\widetilde{\widetilde{\Lambda}}_{\eps}(\tau))e^{\intl_{\tau}^t(n^2(s)+K_2(s)+K_6(s))\,ds}-L_{\tau}^{\eps}(t).
\end{equation}
where,
\begin{equation}
L_{\tau}^{\eps}(t)=\intl_{\tau}^te^{\intl_s^t(n^2(r)+K_2(r)+K_6(r))dr}M_\eps(s)\suml_{k=1}^n\frac{2\lb
\tilde{A}u,B_ku\rb}{|u|^2+\eps}\, dw^k(s),t\geq\tau.
\end{equation}
Let us  denote, for $t\geq 0$,
\begin{eqnarray*}
S_{\eps}(t)&=&e^{-\intl_0^t(n^2(r)+K_2(r)+K_6(r))dr}M_\eps(t)\widetilde{\widetilde{\Lambda}}_{\eps}(t),\\
N_{\eps}(t)&=&e^{-\intl_0^t(n^2(r)+K_2(r)+K_6(r))dr}\intl_{\tau}^tM_\eps(s)\frac{|(\tilde{A}-\widetilde{\Lambda}_{\eps})u|^2}{|u|^2+\eps}\,ds.
\end{eqnarray*}
Multiplying inequality \eqref{eqn:NX_tEstim-1} by
$e^{-\intl_0^t(n^2(r)+K_2(r)+K_6(r))dr}$  we infer that
\begin{equation}
\label{eqn:NS_t-1} S_{\eps}(t)+N_{\eps}(t)\leq
S_{\eps}(\tau)-2\intl_{\tau}^te^{-\intl_0^s(n^2(r)+K_2(r)+K_6(r))dr}M_\eps(s)\suml_{k=1}^n\frac{2\lb
\tilde{A}u,B_ku\rb}{|u|^2+\eps}\, dw^k(s).
\end{equation}
Therefore, by the definition of $S_{\eps}$, we infer that
\begin{equation}\label{eqn:NS_t-1'}
S_{\eps}(t)+N_{\eps}(t)\leq
S_{\eps}(\tau)-2\intl_{\tau}^tS_{\eps}(s)\suml_{k=1}^n\frac{\lb
\tilde{A}u,B_ku\rb}{|\lb \tilde{A}u,u\rb|}\, dw^k(s).
\end{equation}
Since $N_{\eps}\geq 0$, by the Comparison Theorem for the one
dimensional diffusions, see e.g. Theorem 1.1 p. 352 in
\cite{[Ikeda-1981]}, we have for $t\geq \tau$, $\mathbb{P}$-a.s.
\begin{equation}\label{eqn:NS_t-1''}
S_{\eps}(t)\leq
S_{\eps}(\tau)e^{-2\intl_{\tau}^t\suml_{k=1}^n\frac{\lb
\tilde{A}u,B_ku\rb}{|\lb \tilde{A}u,u\rb|}\, dw^k(s)
-2\intl_{\tau}^t\suml_{k=1}^n\frac{|\lb
\tilde{A}u,B_ku\rb|^2}{|\lb \tilde{A}u,u\rb|^2}\,ds}.
\end{equation}
Hence
\begin{equation}\label{eqn:NS_t-11''}
\widetilde{\widetilde{\Lambda}}_{\eps}(t)\leq
\widetilde{\widetilde{\Lambda}}_{\eps}(\tau)\frac{M_\eps(\tau)}{M_\eps(t)}e^{\intl_{\tau}^t(n^2(r)+K_2(r)+K_6(r))dr}
e^{-2\intl_{\tau}^t\suml_{k=1}^n\frac{\lb \tilde{A}u,B_ku\rb}{|\lb
\tilde{A}u,u\rb|}\, dw^k(s)
-2\intl_{\tau}^t\suml_{k=1}^n\frac{|\lb
\tilde{A}u,B_ku\rb|^2}{|\lb \tilde{A}u,u\rb|^2}\,ds},t\in [0,T]
\end{equation}
Put $\tau=0$ in the equality \eqref{eqn:NS_t-11''}. Hence,
\begin{equation}\label{eqn:NS_t-11'''}
\liminfl_{\eps\to 0}\widetilde{\widetilde{\Lambda}}_{\eps}(t)\leq
e^{\intl_{\tau}^t(n^2(r)+K_2(r)+K_6(r))dr}
e^{-2\intl_{\tau}^t\suml_{k=1}^n\frac{\lb \tilde{A}u,B_ku\rb}{|\lb
\tilde{A}u,u\rb|}\, dw^k(s)
-2\intl_{\tau}^t\suml_{k=1}^n\frac{|\lb
\tilde{A}u,B_ku\rb|^2}{|\lb
\tilde{A}u,u\rb|^2}\,ds}\frac{\widetilde{\widetilde{\Lambda}}_{\eps}(0)}{M_\eps(t)},t\in
[0,T].
\end{equation}
Notice that
$$
\widetilde{\widetilde{\Lambda}}(t)=\liminfl_{\eps\to
0}\widetilde{\widetilde{\Lambda}}_{\eps}(t), t\in [0,\tau_0).
$$
Thus
\begin{equation}\label{eqn:NS_t-11''''}
\widetilde{\widetilde{\Lambda}}(t)\leq
e^{\intl_{\tau}^t(n^2(r)+K_2(r)+K_6(r))dr}
e^{-2\intl_{\tau}^t\suml_{k=1}^n\frac{\lb \tilde{A}u,B_ku\rb}{|\lb
\tilde{A}u,u\rb|}\, dw^k(s)
-2\intl_{\tau}^t\suml_{k=1}^n\frac{|\lb
\tilde{A}u,B_ku\rb|^2}{|\lb
\tilde{A}u,u\rb|^2}\,ds}\frac{\widetilde{\widetilde{\Lambda}}(0)}{M_\eps(t)},t\in
[0,\tau_0).
\end{equation}
It follows from assumptions \textbf{(AC3)}, \textbf{(AC7)} that
RHS is uniformly bounded w.r.t. $t\in [0,T]$ a.s. for any
$\eps>0$. Hence, the result follows.

\end{proof}

\section{Proof of Theorem \ref{thm:ExistenceSpectrLimit} on  the existence of a  spectral limit}
\label{sec:proof_thm_spectral}

\begin{proof}[Proof of Theorem \ref{thm:ExistenceSpectrLimit}]
Without loss of generality we can suppose that $T_0=0$ and
$\lambda=0$ in the assumption \eqref{eqn:CoercivAss}. Otherwise,
we can replace $A$ by $A+\la \id$ and $F$ by $F-\la\id$.

Let us begin the proof with an observation that  $|u(t)|>0$ for
all $t>0$. Indeed,    otherwise by the Theorem
\ref{thm:BackwardUniqueness}  we would have that $u(0)$ is
identically $0$ what would contradict one of our assumptions.
Hence the process  $$\tilde{\Lambda}(u(t))=\frac{\lb
\tilde{A}u(t),u(t) \rb}{|u(t)|^2},\;\; t\geq 0$$ is well defined.
Let us also note that $\widetilde{\Lambda}^{\eps}$
 converges pointwise and monotonously to $\widetilde{\Lambda}$.

\begin{proof}[\textbf{Step 1:} Proof of  the existence of
the limit $\liml_{t\to\infty}\tilde{\Lambda}(t)$] $\mbox{
}$\linebreak \phantom{So}Let us fix $\tau\geq 0$. By the same
argument as in the proof of the Theorem
\ref{thm:BackwardUniqueness} we get inequality \eqref{eqn:ineq-2}
i.e. $\mathbb{P}$-a.s.
\begin{eqnarray}\label{eqn:ineq-3}
M_\eps(t)\widetilde{\widetilde{\Lambda}}_{\eps}(t)&+&\intl_{\tau}^tM_\eps(s)\frac{|(\tilde{A}-\widetilde{\widetilde{\Lambda}}_{\eps})u|^2}{|u|^2+\eps}\,ds+
2\eps\intl_{\tau}^tM_\eps(s)\frac{(\widetilde{\widetilde{\Lambda}}_{\eps})^2}{|u|^2+\eps}\,ds
\nn\\
&\leq&
M_\eps(\tau)\widetilde{\widetilde{\Lambda}}_{\eps}(\tau)+\intl_{\tau}^tK_1(s)M_\eps(s)\,ds
+\frac{C}{\eps_2}\intl_{\tau}^t(n^2(s)+K_2(s)\\
&+& K_6(s))M_\eps(s)\widetilde{\widetilde{\Lambda}}_{\eps}(s)\,ds
-\intl_{\tau}^tM_\eps(s)\suml_{k=1}^n\frac{2\lb
\tilde{A}u,B_ku\rb}{|u|^2+\eps}\, dw^k(s), t\geq\tau.
\nn
\end{eqnarray}
By the assumptions of Theorem \ref{thm:ExistenceSpectrLimit}
$K_1(s)=0$, $s\in[0,\infty)$. Consequently, for $t\geq\tau$,
\begin{eqnarray}
M_\eps(t)\widetilde{\widetilde{\Lambda}}_{\eps}(t)+\intl_{\tau}^tM_\eps(s)\frac{|(\tilde{A}-\widetilde{\widetilde{\Lambda}}_{\eps})u|^2}{|u|^2+\eps}\,ds+
2\eps\intl_{\tau}^tM_\eps(s)\frac{(\widetilde{\widetilde{\Lambda}}_{\eps})^2}{|u|^2+\eps}\,ds
\leq M_\eps(\tau)\widetilde{\widetilde{\Lambda}}_{\eps}(\tau)\nonumber\\
+\frac{C}{\eps_2}\intl_{\tau}^t(n^2(s)+K_2(s)+K_6(s))M_\eps(s)\widetilde{\widetilde{\Lambda}}_{\eps}(s)\,ds
-\intl_{\tau}^tM_\eps(s)\suml_{k=1}^n\frac{2\lb
\tilde{A}u,B_ku\rb}{|u|^2+\eps}\, dw^k(s).\label{eqn:ineq-2'}
\end{eqnarray}
Thus, for $t\geq\tau$,
\begin{equation}
\label{eqn:X_tEstim-1}
M_\eps(t)\widetilde{\widetilde{\Lambda}}_{\eps}(t)+\intl_{\tau}^tM_\eps(s)\frac{|(\tilde{A}-\widetilde{\widetilde{\Lambda}}_{\eps})u|^2}{|u|^2+\eps}\,ds\leq
M_\eps(\tau)\widetilde{\widetilde{\Lambda}}_{\eps}(\tau)e^{\intl_{\tau}^t(n^2(s)+K_2(s)+K_6(s))\,ds}-L_{\tau}^{\eps}(t).
\end{equation}
where,
\begin{equation}
L_{\tau}^{\eps}(t)=\intl_{\tau}^te^{\intl_s^t(n^2(r)+K_2(r)+K_6(r))dr}M_\eps(s)\suml_{k=1}^n\frac{2\lb
\tilde{A}u,B_ku\rb}{|u|^2+\eps}\, dw^k(s),\;\;t\geq\tau.
\end{equation}
Let us  denote, for $t\geq 0$,
\begin{eqnarray*}
S_{\eps}(t)&=&e^{-\intl_0^t(n^2(r)+K_2(r)+K_6(r))dr}M_\eps(t)\widetilde{\widetilde{\Lambda}}_{\eps}(t),\\
N_{\eps}(t)&=&e^{-\intl_0^t(n^2(r)+K_2(r)+K_6(r))dr}\intl_{\tau}^tM_\eps(s)\frac{|(\tilde{A}-\widetilde{\Lambda}_{\eps})u|^2}{|u|^2+\eps}\,ds.
\end{eqnarray*}

Multiplying inequality \eqref{eqn:X_tEstim-1} by
$e^{-\intl_0^t(n^2(r)+K_2(r)+K_6(r))dr}$  we infer that
\begin{equation}
\label{eqn:S_t-1} S_{\eps}(t)+N_{\eps}(t)\leq
S_{\eps}(\tau)-2\intl_{\tau}^te^{-\intl_0^s(n^2(r)+K_2(r)+K_6(r))dr}M_\eps(s)\suml_{k=1}^n\frac{2\lb
\tilde{A}u,B_ku\rb}{|u|^2+\eps}\, dw^k(s).
\end{equation}
Therefore, by the definition of $S_{\eps}$, we infer that
\begin{equation}\label{eqn:S_t-1'}
S_{\eps}(t)+N_{\eps}(t)\leq
S_{\eps}(\tau)-2\intl_{\tau}^tS_{\eps}(s)\suml_{k=1}^n\frac{\lb
\tilde{A}u,B_ku\rb}{|\lb \tilde{A}u,u\rb|}\, dw^k(s).
\end{equation}
Since $N_{\eps}\geq 0$, by the comparison principle for the one
dimensional diffusions, see e.g. Theorem 1.1 p. 352 in
\cite{[Ikeda-1981]}, we have for $t\geq \tau$, $\mathbb{P}$-a.s.
\begin{equation}\label{eqn:S_t-1''}
S_{\eps}(t)\leq
S_{\eps}(\tau)e^{-2\intl_{\tau}^t\suml_{k=1}^n\frac{\lb
\tilde{A}u,B_ku\rb}{|\lb \tilde{A}u,u\rb|}\, dw^k(s)
-2\intl_{\tau}^t\suml_{k=1}^n\frac{|\lb
\tilde{A}u,B_ku\rb|^2}{|\lb \tilde{A}u,u\rb|^2}\,ds}.
\end{equation}
Let us observe that inequality \eqref{eqn:S_t-1''} makes sense
when $\eps=0$. Indeed, as already mentioned before, $|u(t)|>0$ for
all $t>0$ $\mathbb{P}$-a.s.. Suppose that we can show that there
exist sequence $\{\eps_l\}_{l=1}^{\infty}$, $\eps_l\to 0$ such
that $S_{\eps_l}(s)\to S_0(s)$, $s\in [0,t]$ $\mathbb{P}$-a.s..
Then we will be able to conclude that inequality
\eqref{eqn:S_t-1''} holds with $\eps=0$.

Thus it is enough to show that
$\xi_t^{\eps}=\suml_{k=1}^n\intl_0^t\frac{<B_k
u(s),u(s)>}{|u(s)|^2+\eps}dw^k(s)$ converges to
$\xi_t=\suml_{k=1}^n\intl_0^t\frac{<B_k
u(s),u(s)>}{|u(s)|^2}dw^k(s)$ as $\eps\to 0$ in probability
uniformly on any finite interval $t\in[0,m]$, $m\in\mathbb{N}$.
Indeed, in this case there exist subsequence
$\{\eps_l\}_{l=1}^{\infty}$, $\eps_l\to 0$ such that
$\xi^{\eps_l}\to\xi$ as $l\to\infty$ uniformly on any finite
interval with probability $1$ and, therefore, $M_{\eps_l}\to M_0$
$\mathbb{P}$-a.s.. Convergence $S_{\eps_l}\to S_0$ uniformly on
any finite interval with probability $1$ as $l\to\infty$ follow.

We will show convergence of martingales $\xi_{\cdot}^{\eps}$ to
$\xi_{\cdot}$ in $L^2(\Omega\times [0,m])$ for any
$m\in\mathbb{N}$. Convergence in probability follows from Doob's
inequality. Notice that $\suml_{k=1}^n\frac{<B_k
u(t),u(t)>}{|u(t)|^2+\eps},t\in[0,\infty)$ converges a.s. to
$\suml_{k=1}^n\frac{<B_k u(t),u(t)>}{|u(t)|^2},t\in[0,\infty)$.
Indeed, $|u(t)|>0$ for all $t\geq 0$ $\mathbb{P}$-a.s..
Furthermore, by Assumption \textbf{(AC3)}
\begin{multline*}
\supl_{t}|\suml_{k=1}^n\left(\frac{<B_k
u(t),u(t)>}{|u(t)|^2+\eps}-\frac{<B_k u(t),u(t)>}{|u(t)|^2}\right)|=\\
\supl_t|\suml_{k=1}^n\frac{<B_k
u(t),u(t)>}{|u(t)|^2+\eps}||\frac{\eps}{|u(t)|_H^2+\eps}|\leq
|\phi(\cdot)|_{L^{\infty}}<\infty.
\end{multline*}
Hence, by the Lebesgue Dominated Convergence Theorem,
$\suml_{k=1}^n\frac{<B_k u(t),u(t)>}{|u(t)|^2+\eps}\to
\suml_{k=1}^n\frac{<B_k u(t),u(t)>}{|u(t)|^2}$ as $\eps\to 0$ in
$L^2(\Omega\times [0,m])$ for any $m\in\mathbb{N}$. Thus we have
shown that inequality \eqref{eqn:S_t-1''} holds with $\eps=0$,
i.e. that for $t\geq \tau$, $\mathbb{P}$-a.s.
\begin{equation}\label{eqn:S_t-2}
S(t)\leq S(\tau)e^{-2\intl_{\tau}^t\suml_{k=1}^n\frac{\lb
\tilde{A}u,B_ku\rb}{|\lb \tilde{A}u,u\rb|}\, dw^k(s)
-2\intl_{\tau}^t\suml_{k=1}^n\frac{|\lb
\tilde{A}u,B_ku\rb|^2}{|\lb \tilde{A}u,u\rb|^2}\,ds},
\end{equation}
where $S(t)=S_{0}(t)$, $N(t)=N_{0}(t)$, $t\geq 0$.

Denote
$$
\vartheta_{\tau}(t)=e^{-2\intl_{\tau}^t\suml_{k=1}^n\frac{\lb
\tilde{A}u,B_ku\rb}{|\lb \tilde{A}u,u\rb|}\, dw^k(s)
-2\intl_{\tau}^t\suml_{k=1}^n\frac{|\lb
\tilde{A}u,B_ku\rb|^2}{|\lb \tilde{A}u,u\rb|^2}\,ds},\,\, t\geq
\tau.
$$
Let us note that a process $(\vartheta_{\tau}(t))_{t\geq \tau}$ is
a local martingale. Moreover it  is a uniformly integrable
martingale. Indeed, in view of the assumption
\eqref{eqn:B_kFirstOrder}, we infer that
\begin{equation}
\supl_{t\geq \tau}\mathbb{E}[\vartheta_{\tau}(t)^{1+\delta}]\leq
\mathbb{E}e^{2(\delta^2+\delta)\intl_{\tau}^{\infty}\suml_k|C_1^k(s)|^2\,
ds}< \infty.
\end{equation}
Hence, by the Doob Martingale Convergence Theorem
\ref{thm:DoobConvergence} the following limit exists
$\mathbb{P}$-a.s. (and in $L^1(\mathbb{P})$)
$$
\vartheta_{\tau}(\infty):=\liml_{t\to\infty}\vartheta_{\tau}(t),
$$
and $\mathbb{E}\vartheta_{\tau}(\infty)=1<\infty$. Therefore,
$\vartheta_{\tau}(\infty)<\infty$ $\mathbb{P}$-a.s.. Furthermore,
$\vartheta_{\tau}(\infty)>0$ $\mathbb{P}$-a.s.. Indeed, by Fatou
Lemma
$$
\mathbb{E}\left[\vartheta_{\tau}(\infty)^{-1}\right]\leq
\liml_{t\to\infty}\mathbb{E}\left[\vartheta_{\tau}(t)^{-1}\right]\leq\mathbb{E}e^{4\intl_{\tau}^{\infty}\suml_k|C_1^k(s)|^2\,
ds} <\infty.
$$
Consequently, by Lemma \ref{lem:Cocycle_lem} we infer that
\begin{equation*}
\liml_{\tau\to\infty}\vartheta_{\tau}(\infty)=1,\mathbb{P}\mbox{-a.s.}.
\end{equation*}
Thus,
\begin{equation*}
\limsup\limits_{t\to\infty}S(t)\leq
S(\tau)\vartheta_{\tau}(\infty).
\end{equation*}
Since the above holds for any $\tau>0$, we infer that
$\mathbb{P}-\mbox{a.s.}$
\begin{equation*}
\limsup\limits_{t\to\infty}S(t)\leq\liminf\limits_{\tau\to\infty}S(\tau)\vartheta_{\tau}(\infty)=\liminf\limits_{\tau\to\infty}S(\tau).
\end{equation*}
We infer that the following limit exists $\mathbb{P}-\mbox{a.s.}$
\begin{equation}
\liml_{t\to\infty}S(t)=\liml_{t\to\infty}e^{-\intl_0^t(n^2(\tau)+K_2(\tau)+K_6(\tau))d\tau}M_{0}(t)\widetilde{\Lambda}(t)
.\label{eqn:S_tLimit-1}
\end{equation}
Moreover, since $n\in L^2(0,\infty)$, $K_2\in L^1(0,\infty)$ and
$K_6\in L^1(0,\infty)$, the following limit exists
$\mathbb{P}-\mbox{a.s.}$
\begin{equation}
\liml_{t\to\infty}e^{-\intl_0^t(n^2(\tau)+K_2(\tau)+K_6(\tau))d\tau}=e^{-\intl_0^{\infty}(n^2(\tau)+K_2(\tau)+K_6(\tau))d\tau}>0.\label{eqn:S_tLimit-2}
\end{equation}
Furthermore, from Assumption \textbf{(AC3)} it follows that
$(M_{0}(t))_{t\geq 0}$ is uniformly integrable exponential
martingale and therefore "as above" exists
$\mathbb{P}-\mbox{a.s.}$
\begin{equation}
M_{0}(\infty):= \liml_{t\to\infty}M_{0}(t)\neq
0.\label{eqn:S_tLimit-3}
\end{equation}
Combining \eqref{eqn:S_tLimit-1}, \eqref{eqn:S_tLimit-2} and
\eqref{eqn:S_tLimit-3} we infer that the following limit exists
$\mathbb{P}$-a.s.
\begin{equation}
\widetilde{\Lambda}(\infty):=\liml_{t\to\infty}\widetilde{\Lambda}(t).\label{eqn:S_tLimit-4}
\end{equation}

\end{proof}

\begin{proof}[\textbf{Step 2:} Proof that $\widetilde{\Lambda}(\infty)\in \sigma(\tilde{A})$]
$\mbox{ }$\linebreak \phantom{So} We will need an estimate for
$N_{\eps}(t)$, $t\geq \tau$. Denote
$$
R_{\tau}(t)=2\intl_{\tau}^t\suml_{k=1}^n\frac{\lb
\tilde{A}u(s),B_ku(s)\rb}{|\lb \tilde{A}u(s),u(s)\rb|}\, dw^k(s),
t\geq \tau.
$$
Let us observe that the process  $R_{\tau}$ martingale in the
formula \eqref{eqn:S_t-1'}. The formula \eqref{eqn:S_t-1'} can be
rewritten as follows.
\begin{equation}
S_{\eps}(t)\leq
S_{\eps}(\tau)-N_{\eps}(t)-\intl_{\tau}^tS_{\eps}(s)dR_{\tau}(s).\label{eqn:S_t-3}
\end{equation}
Denote
$$
\Psi_{\tau}(t)=e^{R_{\tau}(t)+\frac{1}{2}\lb R_{\tau}\rb (t)},\;
t\geq \tau.
$$
By the Comparison Theorem for the one dimensional diffusions, see
e.g. Theorem 1.1 p. 352 in  \cite{[Ikeda-1981]}, we have for
$t\geq \tau$,
\begin{eqnarray}
\label{eqn:N_t-1} S_{\eps}(t) &\leq &
\frac{1}{\Psi_{\tau}(t)}\Big(S_{\eps}(\tau)-\intl_{\tau}^t\Psi_{\tau}(s)dN_{\eps}(s)\Big)
=-N_{\eps}(t)+\frac{S_{\eps}(\tau)}{\Psi_{\tau}(t)}+\frac{1}{\Psi_{\tau}(t)}\intl_{\tau}^tN_{\eps}(s)d\Psi_{\tau}(s)\nonumber\\
&=&-N_{\eps}(t)+\frac{S_{\eps}(\tau)}{\Psi_{\tau}(t)}+\frac{1}{\Psi_{\tau}(t)}\Big(\intl_{\tau}^tN_{\eps}(s)\Psi_{\tau}(s)dR_{\tau}(s)
+\intl_{\tau}^tN_{\eps}(s)\Psi_{\tau}(s)d \lb R_{\tau}\rb
(s)\Big).
\end{eqnarray}
Also by the assumption \eqref{eqn:CoercivAss} with $\la=0$ we have
that $S_{\eps}(t)\geq 0$, $t\geq \tau$. Thus, we infer from
\eqref{eqn:N_t-1} that
\begin{equation}
N_{\eps}(t)\leq
\frac{S_{\eps}(\tau)}{\Psi_{\tau}(t)}+\frac{1}{\Psi_{\tau}(t)}(\intl_{\tau}^tN_{\eps}(s)\Psi_{\tau}(s)dR_{\tau}(s)
+\intl_{\tau}^tN_{\eps}(s)\Psi_{\tau}(s)d\lb R_{\tau}\rb
(s)).\label{eqn:N_t-2}
\end{equation}
Denote $K_{\eps}(t)=N_{\eps}(t)\Psi_{\tau}(t),\; t\geq \tau$. Then
we can rewrite \eqref{eqn:N_t-2} as follows
\begin{equation}
K_{\eps}(t)\leq
S_{\eps}(\tau)+\intl_{\tau}^tK_s^{\eps}dR_{\tau}(s)+\intl_{\tau}^tK_s^{\eps}d\lb
R_{\tau}\rb (s), \;t\geq \tau.
\end{equation}
Applying the Comparison Theorem we have that
\begin{equation}
K_{\eps}(t)\leq S_{\eps}(\tau)\Psi_{\tau}(t),\; t\geq
\tau.\label{eqn:N_t-3}
\end{equation}
Consequently, we have an estimate for $N^{\eps}$:
\begin{equation}
N_{\eps}(t)\leq S_{\eps}(\tau),\; t\geq \tau.\label{eqn:N_t-4}
\end{equation}
We infer from \eqref{eqn:N_t-4} that $\mathbb{P}-\mbox{a.s.}$
\begin{equation}
\liml_{t\to\infty}N_{\eps}(t)\leq S_{\eps}(\tau).
\end{equation}
i.e. $\mathbb{P}-\mbox{a.s.}$
\begin{equation}
e^{-\intl_0^{\infty}(n^2(\tau)+K_2(\tau)+K_6(\tau))d\tau}\intl_{\tau}^{\infty}M_\eps(s)\frac{|(\tilde{A}-\widetilde{\Lambda}_{\eps})u(t)|^2}{|u|^2+\eps}\,ds\leq
e^{-\intl_0^{\tau}(n^2(\tau)+K_2(\tau)+K_6(\tau))d\tau}\widetilde{\widetilde{\Lambda}}_{\eps}(\tau).\label{eqn:Aux-4}
\end{equation}
As we have already mentioned $|u(t)|>0$, $t>0$. Hence the right
hand side of inequality \eqref{eqn:Aux-4} is uniformly bounded
w.r.t. $\eps$. Therefore, by Fatou Lemma $\mathbb{P}-\mbox{a.s.}$
\begin{equation}
\intl_{\tau}^{\infty}M(s)\frac{|(\tilde{A}-\widetilde{\Lambda}(s))u(s)|^2}{|u(s)|^2}\,ds<
\infty.\label{eqn:SpectrIntCond}
\end{equation}
Let $\psi:\mathbb{R}\ni t\mapsto\frac{u(t)}{|u(t)|}\in H$. It
follows from \eqref{eqn:SpectrIntCond} that there exists sequence
$\{t_j\}_{j=1}^{\infty}:t_j\to\infty$ as $j\to\infty$ and
$(\tilde{A}-\widetilde{\Lambda}(t_j))\psi(t_j)\to 0$ in $H$.
Therefore, by \eqref{eqn:S_tLimit-4}
$h_j=(\tilde{A}-\widetilde{\Lambda}(\infty))\psi(t_j)\to 0$ in
$V^\prime$ as $j\to\infty$. If $\widetilde{\Lambda}(\infty)\notin
\sigma(\tilde{A})$  then
$(\tilde{A}-\widetilde{\Lambda}(\infty))^{-1}\in\mathcal{L}(V^\prime,V)$.
Since $h_j\to 0$ in $V^\prime$ we have
$\psi(t_j)=(\tilde{A}-\widetilde{\Lambda}(\infty))^{-1}h_j\to 0$
in $H$. This is contradiction with the fact that $|\psi(t)|=1$.
This completes the argument in \textbf{Step 2}.
\end{proof}
The proof of Theorem \ref{thm:ExistenceSpectrLimit} is now
complete.
\end{proof}

\section{Applications and Examples}
\label{sec:appl}

Now we will show how to apply Theorems
\ref{thm:BackwardUniqueness} and \ref{thm:ExistenceSpectrLimit} to
certain linear and nonlinear SPDEs.
\subsection{Backward Uniqueness for Linear SPDEs.}
 We will consider following equation: 
\begin{eqnarray}
\,\,\,\,\,\,\,\,\,\,du+(Au+F(t)u)\,dt+\suml_{k=1}^nB_ku\,dw^k_t=f\,dt+\suml_{k=1}^ng_k\,
dw^k_t,\label{eqn:BasicAbstractEq}
\end{eqnarray}
where $f\in \mathcal{M}^2(0,T;V)$, $g\in
\mathcal{M}^2(0,T;D(\hat{A}))$ and the
 operators $A$, $B_k$, $k=1,\ldots,n$ satisfy the same
assumptions of Theorem \ref{thm:BackwardUniqueness}. We will
suppose that $F\in L^2(0,T;\mathcal{L}(V,H))$. Then, we notice
that assumption \eqref{eqn:NonlinearityAss} is satisfied with
$n=|F(\cdot)|_{\mathcal{L}(V,H)}$. Applying Theorem
\ref{thm:BackwardUniqueness} we have the following result:
\begin{theorem}\label{thm:BackwardUniqueness-2}
Let $u_1$, $u_2$ be two solutions of \eqref{eqn:BasicAbstractEq},
such that for some $\delta_0>0$
\begin{eqnarray}
u_1,u_2\in 
\mathcal{M}^2(0,T;D(\hat{A}))\cap
L^{2+\delta_0}(\Omega,C([0,T];V)).\label{eqn:IntegrCond}
\end{eqnarray}
Then if $u_1(T)=u_2(T)$, $\mathbb{P}$-a.s., $u_1(t)=u_2(t),\;
t\in[0,T]$ $\mathbb{P}$-a.s..
\end{theorem}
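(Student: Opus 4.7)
The plan is to reduce Theorem~\ref{thm:BackwardUniqueness-2} to Theorem~\ref{thm:BackwardUniqueness} by considering the difference $u=u_1-u_2$. Since equation \eqref{eqn:BasicAbstractEq} is linear and both solutions are driven by the same forcing $f$ and the same noise coefficients $g_k$, the forcing and additive-noise contributions cancel, so $u$ satisfies the homogeneous problem
\begin{equation*}
du(t)+(A(t)u(t)+F(t)u(t))\,dt+\suml_{k=1}^nB_k(t)u(t)\,dw^k(t)=0,
\end{equation*}
with terminal value $u(T)=u_1(T)-u_2(T)=0$ a.s. This is precisely problem \eqref{eqn-1.1} with the linear nonlinearity $\tilde F(t,v):=F(t)v$, so the strategy is simply to check the hypotheses of Theorem~\ref{thm:BackwardUniqueness} for this $u$ and $\tilde F$, and then invoke that theorem to deduce $u\equiv 0$.

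First I would verify the regularity hypotheses \eqref{eqn:SolutionAss-2} and \eqref{eqn:SolutionAss-1'}. Since each $u_i$ belongs to $\mathcal{M}^2(0,T;D(\hat A))\cap L^{2+\delta_0}(\Omega,C([0,T];V))$ by \eqref{eqn:IntegrCond}, and both spaces are linear, the difference $u=u_1-u_2$ lies in the same spaces. Next I would verify the nonlinearity bound \eqref{eqn:NonlinearityAss}: by hypothesis $F(t)\in\mathcal{L}(V,H)$, so
\begin{equation*}
|\tilde F(t,u(t))|=|F(t)u(t)|\leq |F(t)|_{\mathcal{L}(V,H)}\,\|u(t)\|,
\end{equation*}
which gives \eqref{eqn:NonlinearityAss} with the deterministic function $n(t):=|F(t)|_{\mathcal{L}(V,H)}$. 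The assumption $F\in L^2(0,T;\mathcal{L}(V,H))$ ensures $n\in L^2(0,T)$, and because $n$ is deterministic, for any $\kappa>0$
\begin{equation*}
\mathbb{E}\exp\!\Bigl(\kappa\intl_0^Tn^2(s)\,ds\Bigr)=\exp\!\Bigl(\kappa\intl_0^T|F(s)|^2_{\mathcal{L}(V,H)}\,ds\Bigr)<\infty,
\end{equation*}
so \eqref{eqn:NonlinearityAss-n} holds for every $\kappa>2+4/\delta_0$. The assumptions \textbf{(AC0)}--\textbf{(AC6)} on $A$ and $B_k$ are part of the hypotheses of Theorem~\ref{thm:BackwardUniqueness-2}.

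Having checked all the hypotheses, Theorem~\ref{thm:BackwardUniqueness} applied to $u$ with terminal condition $u(T)=0$ yields $u(t)=0$ for all $t\in[0,T]$, $\mathbb{P}$-a.s., i.e.\ $u_1(t)=u_2(t)$ for all $t\in[0,T]$. I do not expect a genuine obstacle here: the whole content of the argument sits in Theorem~\ref{thm:BackwardUniqueness}, and the only thing to be mildly careful about is that the ``nonlinearity'' $\tilde F(t,v)=F(t)v$ is genuinely linear so that the subtraction $u_1-u_2$ produces the homogeneous equation; the deterministic $L^2$-bound on $n$ then trivially upgrades to the exponential moment condition \eqref{eqn:NonlinearityAss-n}.
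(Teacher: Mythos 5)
Your proposal is correct and coincides with the paper's own argument: the paper likewise sets $u=u_1-u_2$, observes that $n=|F(\cdot)|_{\mathcal{L}(V,H)}$ is a deterministic $L^2$ function satisfying \eqref{eqn:NonlinearityAss}--\eqref{eqn:NonlinearityAss-n}, and applies Theorem~\ref{thm:BackwardUniqueness} directly. Your write-up merely spells out the hypothesis-checking that the paper leaves implicit.
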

\begin{proof}
Denote $u=u_1-u_2$. Applying Theorem \ref{thm:BackwardUniqueness}
to the process $u$ we immediately get the result.
\end{proof}
\begin{example}\label{exm:BackUniq-0}
Assume $a^{i,j}\in C_{t,x}^{1,1}([0,T]\times \Rn)$,
$i,j=1,\ldots,n$ and $b_k,c,c_l:[0,T]\times \Rn\to \mathbb{R}$,
$k=1,\ldots,n$, $l=1,\ldots,m$ are measurable functions. Denote
$\tilde{A}_0(\cdot)=(a^{i,j}(\cdot))_{i,j=1}^n$. Assume that the
following inequalities are satisfied
$$
C_1\id\leq \tilde{A}_0\leq C_2\id, \mbox{ for some }0<C_1\leq
C_2<\infty,
$$
$$
\intl_0^T|\partial_t\tilde{A}_0|ds\leq C_3\id, \mbox{ for some
}C_3>0
$$
$$
\supl_{x\in \Rn}\suml_{k}|b_k(t,\cdot)|+|c(t,\cdot)|\in L^2(0,T).
$$
and
$$
c_l\in L^{\infty}([0,T]\times\Rn),\nabla c_l\in
L^1(0,T;L^{\infty}(\Rn)),\triangle c_l\in
L^2(0,T;L^{\infty}(\Rn)).
$$
Then the equation
\begin{equation*}
  du = \left(\suml_{i,j}\frac{\partial}{\partial x_i}\left(a^{ij} \frac{\partial u}{\partial x_j}\right)+\suml_kb_k(t,\cdot)\frac{\partial u}{\partial x_k}+c(t,\cdot)u+f\right)\,dt+\suml_{l=1}^m\left(c_l(t,\cdot)u+g_l\right)\circ \, dw^l_t,
\end{equation*}
where stochastic integral is in Stratonovich sense, satisfies
conditions of the Theorem \ref{thm:BackwardUniqueness-2}. Indeed,
we have in this case that
$$
\tilde{A}=-\suml_{i,j}\frac{\partial}{\partial x_i}\left(a^{ij}
\frac{\partial}{\partial x_j}\right) ,
F(t)=-\suml_kb_k(t,\cdot)\frac{\partial}{\partial
x_k}-c(t,\cdot),B_l=c_l,
$$
$$
H=L^2(\Rn), V=H_0^{1,2}(\Rn).
$$
\end{example}
\begin{example}\label{exm:BackUniq-1}
Assume $b_k,c,\sigma_k\in C_{t,x}^{0,1}([0,T]\times \Rn)$,
$k=1,\ldots,n$ and following inequalities are satisfied:
$$
\supl_{x\in \Rn}\suml_{k}|\nabla\sigma_k(t,\cdot)|^2\in
L^{\infty}(0,T),
$$
$$
\supl_{x\in \Rn}\suml_{k}|b_k(t,\cdot)|+|c(t,\cdot)|\in L^2(0,T).
$$
Then the equation
\begin{equation*}
  du = \left(\Delta  u+\suml_kb_k(t,\cdot)\frac{\partial u}{\partial x_k}+c(t,\cdot)u+f\right)\,dt+\suml_k\left(\sigma_k(t,\cdot)\frac{\partial u}{\partial x_k}+g_k\right)\circ \, dw^k_t,
\end{equation*}
where stochastic integral is in Stratonovich sense, satisfies
conditions of the Theorem \ref{thm:BackwardUniqueness-2}. Indeed,
we have in this case that
$$
\tilde{A}=-\Delta ,
F(t)=-\suml_kb_k(t,\cdot)\frac{\partial}{\partial
x_k}-c(t,\cdot),B_k=\sigma_k(t,\cdot)\frac{\partial}{\partial
x_k},
$$
$$
H=L^2(\Rn), V=H_0^{1,2}(\Rn).
$$
We need to check only assumption \eqref{eqn:CWeakBoundAss}. Other
conditions are trivial. We have
\begin{eqnarray*}
\suml_k([\tilde{A},B_k]u,B_ku)&=&\suml_k\intl_{\Rn}\sigma_k\frac{\partial
u}{\partial x_k}\big(\sigma_k\frac{\partial\Delta  u}{\partial
x_k}-\Delta (\sigma_k\frac{\partial u}{\partial x_k})\big)\,dx
\\
&=&\suml_k\intl_{\Rn}|\nabla \sigma_k|^2|\frac{\partial
u}{\partial x_k}|^2dx\leq \supl_{x\in
\Rn}\suml_{k}|\nabla\sigma_k(t,\cdot)|^2 \Vert u\Vert^2.
\end{eqnarray*}
The existence of a regular solution has been established in
\cite{[Pardoux_1979]}.
\end{example}
\begin{remark}
Instead of the Laplacian one can consider a second order time
dependent operator
$A(t)=\suml_{ij}a^{ij}(t)\frac{\partial^2}{\partial x_i\partial
x_j}$ where matrix $a=(a^{ij}):[0,T]\to \Rn$ is uniformly (w.r.t.
$t$) positively definite.
\end{remark}
\begin{example}
Let $H$ be the real separable Hilbert space, $A:D(A)\subset H\to
H$ be a strictly negative linear operator on $H$;
$V=D((-A)^{1/2})$ be the Hilbert space endowed with the natural
norm. Identifying $H$ with its dual we can write $V\subset
H\subset V^\prime$. Let also $B:V\times V\to V^\prime$ be a
bilinear continuous operator and $b_k\in \mathbb{R}$,
$k=1,\ldots,n$ are given. Assume that $B$ satisfies
\begin{equation}
|B(u,v)|\leq C|u|^{1/2}|Au|^{1/2}\Vert v\Vert,u\in D(A),v\in V
\end{equation}
and
\begin{equation}
|B(u,v)|\leq C|u||v|_{D(A)},u\in H,v\in D(A).
\end{equation}
Then equation
\begin{equation*}
du = (A u +
B(u,u_{stat}(t))+B(u_{stat}(t),u))dt+\suml_{k=1}^nb_ku\circ dw^k_t
\end{equation*}
where $u_{stat}\in L^2(0,T;D(A))$ $\mathbb{P}$-a.s. satisfies
conditions of the Theorem \ref{thm:BackwardUniqueness}. Indeed, it
is enough to put
$$
\tilde{A}=A,F(t)=B(u_{stat}(t),\cdot)+B(\cdot,u_{stat}(t)),B_k=b_k\cdot
$$
In particular, in this scheme falls linearisation around solution
$u_{stat}$ of two dimensional stochastic Navier-Stokes equation
with multiplicative noise (see \cite{Cr-Fl-94}).
\end{example}
%
\subsection{Backward Uniqueness for SPDEs with a quadratic nonlinearity}
Assume that the linear operators $A$, $B_k$, $k=1,\ldots,n$
satisfy the same assumptions as in the Theorem
\ref{thm:NonlinearBackwardUniqueness}, $B\in\mathcal{L}(V\times
V,V^\prime)$, $R\in\mathcal{L}(V,H)$ and
\begin{equation}
|B(u,v)|+|B(v,u)|\leq K\Vert u\Vert|Av|,\, u\in V,v\in
D(\hat{A}).\label{eqn:QuadrNonlinCond}
\end{equation}
Assume that $f\in L^2(0,T;H)$ and $g_k\in \mathcal{M}^2(0,T;H)$,
$k=1,\ldots,n$. Consider the following problem.
\begin{eqnarray}
du&+&(Au+B(u,u)+R(u))\,dt+\suml_{k=1}^nB_ku\,dw^k_t=f\,dt+\suml_{k=1}^ng_k\,
dw^k_t,\label{eqn:BasicAbstractEq-2}
\end{eqnarray}
Applying Theorem \ref{thm:NonlinearBackwardUniqueness} we have the
following result.
\begin{theorem}\label{thm:BackwardUniqueness-3}
Suppose that  $u_1$, $u_2$ are  two solutions of
\eqref{eqn:BasicAbstractEq-2}, such that for some  $\delta_0>0$
and $i=1,2$,
\begin{eqnarray}
&&u_i \in 
\mathcal{M}^2(0,T;D(\hat{A}))\cap
L^{2+\delta_0}(\Omega,C([0,T];V)),\label{eqn:ChBIntegrCond-2}
\end{eqnarray}
If $u_1(T)=u_2(T)$, $\mathbb{P}$-a.s., then for all $t\in[0,T]$,
$u_1(t)=u_2(t)$, $\mathbb{P}$-a.s..
\end{theorem}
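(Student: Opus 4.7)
The plan is to reduce Theorem \ref{thm:BackwardUniqueness-3} to Theorem \ref{thm:NonlinearBackwardUniqueness} applied to the difference $u:=u_1-u_2$. First, I would use bilinearity of $B$ to write
$$B(u_1,u_1)-B(u_2,u_2)=B(u,u_1)+B(u_2,u),$$
and then subtract the two copies of \eqref{eqn:BasicAbstractEq-2}; this shows that $u$ solves a problem of the form \eqref{eqn-1.1} with the same operators $A$ and $B_k$, vanishing external forcing and noise, and random nonlinearity
$$F(t,v):=B(v,u_1(t))+B(u_2(t),v)+Rv,\qquad v\in V.$$
The hypothesis $u_1(T)=u_2(T)$ becomes $u(T)=0$ $\mathbb{P}$-a.s.

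Next I would verify the growth bound \eqref{eqn:NNonlinearityAss} for $F$. Applying \eqref{eqn:QuadrNonlinCond} twice, once with $v=u_1(t)$ and once with $v=u_2(t)$, yields
$$|B(u(t),u_1(t))|+|B(u_2(t),u(t))|\leq K\Vert u(t)\Vert\bigl(|Au_1(t)|+|Au_2(t)|\bigr),$$
while $R\in\mathcal{L}(V,H)$ gives $|Ru(t)|\leq\Vert R\Vert_{\mathcal{L}(V,H)}\Vert u(t)\Vert$. Setting
$$n(t):=K\bigl(|Au_1(t)|+|Au_2(t)|\bigr)+\Vert R\Vert_{\mathcal{L}(V,H)},$$
the process $n$ is progressively measurable, and the integrability \eqref{eqn:ChBIntegrCond-2} (combined with \textbf{(AC5)}, which controls $|A\cdot|$ in terms of the graph norm of $\hat A$) implies $\int_0^T n(s)^2\,ds<\infty$ $\mathbb{P}$-a.s., so that \eqref{eqn:NNonlinearityAss-n} holds. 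The pointwise estimate $|F(t,u(t))|\leq n(t)\Vert u(t)\Vert$ is then immediate.

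The pathwise regularity \eqref{eqn:NSolutionAss-2} and \eqref{eqn:NSolutionAss-1'} for $u$ follow from \eqref{eqn:ChBIntegrCond-2} by the triangle inequality, so all hypotheses of Theorem \ref{thm:NonlinearBackwardUniqueness} are in place. Invoking that theorem would give $u(t)=0$ for all $t\in[0,T]$ $\mathbb{P}$-a.s., that is, $u_1\equiv u_2$. The only mildly delicate point in the reduction is the passage from the quadratic nonlinearity $B(u_i,u_i)$ to a bound that is linear in $\Vert u\Vert$ with a progressively measurable, a.s.\ square-integrable coefficient $n(t)$; this is precisely the role of the structural assumption \eqref{eqn:QuadrNonlinCond}, and once that observation is in hand the theorem follows as a straightforward corollary of Theorem \ref{thm:NonlinearBackwardUniqueness}.
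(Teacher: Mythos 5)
Your proposal is correct and follows essentially the same route as the paper: set $u=u_1-u_2$, use bilinearity to recast the quadratic term as a nonlinearity that is linear in $u$ with coefficient controlled by \eqref{eqn:QuadrNonlinCond}, take $n(t)=K(|Au_1(t)|+|Au_2(t)|)+\Vert R\Vert_{\mathcal{L}(V,H)}$, and invoke Theorem \ref{thm:NonlinearBackwardUniqueness}. The only (immaterial) difference is the choice of decomposition, $B(u,u_1)+B(u_2,u)$ versus the paper's $B(u_1,u)+B(u,u_2)$; both are covered by the symmetric bound in \eqref{eqn:QuadrNonlinCond}.
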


\begin{proof}[Proof of Theorem \ref{thm:BackwardUniqueness-3}.]
We denote $u=u_1-u_2$. Then $u\in
\mathcal{M}^2(0,T;D(\hat{A}))\cap
L^{2+\delta_0}(\Omega,C([0,T];V))$ and $u$ is a solution to
\begin{equation}
du+(Au+B(u_1,u)+B(u,u_2)+R(u))\,dt+\suml_{k=1}^nB_ku\,dw^k_t=0.
\end{equation}
By  the assumption \eqref{eqn:QuadrNonlinCond} it follows that
\begin{eqnarray*}
|B(u_1,u)+B(u,u_2)+R(u)| &\leq &
[\Vert R\Vert_{\mathcal{L}(V,H)}+K(|Au_1|+|Au_2|)]\Vert u\Vert\\
&&\hspace{-2truecm}\lefteqn{ \leq C[\Vert
R\Vert_{\mathcal{L}(V,H)}+K(|\tilde{A}u_1|+|\tilde{A}u_2|)]\Vert
u\Vert.}
\end{eqnarray*}
Therefore,  by \eqref{eqn:ChBIntegrCond-2} we have that $n=\Vert
R\Vert_{\mathcal{L}(V,H)}+K(|\tilde{A}u_1|+|\tilde{A}u_2|)$
satisfy assumption
(\ref{eqn:NNonlinearityAss}-\ref{eqn:NNonlinearityAss-n}) and
Theorem \ref{thm:NonlinearBackwardUniqueness} applies to $u$.
\end{proof}
The stochastic Navier-Stokes equations with multiplicative noise
fit
into the framework described above.
\begin{example}
Let $d=2$ or $d=3$,
$H=\{f\in\mathbb{L}^2(\mathbb{T}^d,\mathbb{R}^d):\diver f=0\}$,
$V=W^{1,2}(\mathbb{T}^d,\mathbb{R}^d)\cap H$, $\nu>0$, $A$ is a
Stokes operator, $P:\mathbb{L}^2(\mathbb{T}^d,\mathbb{R}^d)\to H$
is a projection on divergence free vector fields, $B_k\in
L^{\infty}([0,T]\times\mathbb{T}^d,\mathcal{L}(\mathbb{R}^d,\mathbb{R}^d))$,
$k=1,\ldots,n$, $\{W_t^k\}_{k=1}^n$ is a sequence of independent
one dimensional Wiener processes on
$(\Omega,\mathcal{F},\{\mathcal{F}_t\}_{t\geq 0},\mathbb{P})$.
Assume also that
$$
\suml_{k=1}^n|B_k|_{L^{\infty}([0,T]\times\mathbb{T}^d,\mathcal{L}(\mathbb{R}^d,\mathbb{R}^d))}+
|\nabla
B_k|_{L^{\infty}([0,T]\times\mathbb{T}^d,\mathcal{L}(\mathbb{R}^d,\mathbb{R}^d))}+
|\triangle
B_k|_{L^{\infty}([0,T]\times\mathbb{T}^d,\mathcal{L}(\mathbb{R}^d,\mathbb{R}^d))}<\infty.
$$
Let also $f\in L^2(0,T;H)$ and $g_k\in \mathcal{M}^2(0,T;H)$,
$k=1,\ldots,n$. Assume that $u_1,u_2\in\mathcal{M}^2(0,T;D(A))\cap
L^{2+\delta_0}(\Omega,C([0,T];V))$ are two solutions of equation:
\begin{equation}
du(t)+P((u(t)\nabla)u(t))dt= (\nu
Au(t)+f(t))dt+\suml_{k=1}^n(PB_k(t)u(t)+g_k(t))\circ dW_t^k,t\in
[0,T],\label{eqn:2D_NSE_Mult_Noise}
\end{equation}
where stochastic integral is in a Stratonovich sense. Then the
assumptions of Theorem \ref{thm:BackwardUniqueness-3} are
satisfied and backward uniqueness holds i.e. if $u_1(T)=u_2(T)$,
$\mathbb{P}$-a.s., then for all $t\in[0,T]$, $u_1(t)=u_2(t)$,
$\mathbb{P}$-a.s..
\end{example}

\subsection{Existence of the spectral limit.}
\begin{example} As it is usual, we denote by $\mathbb{T}^n$ the $n$-dimensional torus. We put
Let $H=L^2(\mathbb{T}^n,\mathbb{R}^n)$ and
$V=W^{1,2}(\mathbb{T}^n,\mathbb{R}^n)$. Assume that $u\in
\mathcal{M}^2(0,T;W^{2,2}(\mathbb{T}^n,\mathbb{R}^n))\cap
L^{2+\delta_0}(\Omega,C([0,T];V))$ is a unique solution of
equation:
\begin{eqnarray*}
  du^k &=& (\suml_{i,j=1}^n\frac{\partial}{\partial x_i}(a^{ij}(\cdot)\frac{\partial u^k}{\partial x_j})+\suml_{l=1}^nb_l(t,\cdot)\frac{\partial u^k}{\partial x_l}+\suml_{l=1}^nc^{kl}(t,\cdot)u^l)dt
  +\suml_{m,l=1}^nh_k^{ml}(t)u^l\circ dw_t^m \\
  u(0) &=& u_0\in V,k=1,\ldots,n
\end{eqnarray*}
Here we assume that matrix $\tilde{A}=(a^{ij}(x))_{i,j=1}^n,
x\in\mathbb{T}^n$ is strictly positive definite, $a^{ij}\in
L^{\infty}(\mathbb{T}^n)$, $i,j=1,\ldots,n$; $h_k^{ml}\in
L^{\infty}([0,\infty)\times \mathbb{T}^n)$, $k,m,l=1,\ldots,n$,
$$
\intl_{0}^{\infty}(|h(s)|_{L^{\infty}(\mathbb{T}^n)}^2+|\nabla
h(s)|_{L^{\infty}(\mathbb{T}^n)}^2+|\triangle
h(s)|_{L^{\infty}(\mathbb{T}^n)}^2)ds<\infty,
$$
$$
\intl_{0}^{\infty}(|b(s)|_{L^{\infty}(\mathbb{T}^n)}+|c(s)|_{L^{\infty}(\mathbb{T}^n)})ds<\infty.
$$
Then assumptions of the Theorem \ref{thm:ExistenceSpectrLimit} are
satisfied and the spectral limit exists.
\end{example}

\appendix

%
\section{Some useful known results}
\label{sec:app} We present here, for convenience of readers, some
standard definitions, lemmas and theorems used in the article. We
follow here book \cite{[Oksendal-2003]}, appendix C and references
therein.
\begin{definition}
A family $(f_j)_{j\in J}$ of measurable functions $f_j:\Omega\to
\Rnu$ is called uniformly integrable if
$$
\liml_{M\to \infty}\left(\supl_{j\in
J}\intl_{|f_j|>M}|f_j|\,d\mathbb{P}\right)=0
$$
\end{definition}
\begin{definition}
An increasing and convex function $\psi:[0,\infty)\to[0,\infty)$
is called a uniform integrability test function if and only if
$\psi$ is  and
$$
\liml_{x\to\infty}\frac{\psi(x)}{x}=\infty.
$$
\end{definition}
\begin{example}
$\psi(x)=x^p,p>1$, $x\geq 0$.
\end{example}
\begin{theorem}\label{thm:UniformIntegrabilityCrit}
The family $(f_j)_{j\in J}$ of measurable functions $f_j:\Omega\to
\Rnu$ is uniformly integrable if and only if there is a uniform
integrability test function $\psi$ such that
$$
\supl_{j\in J}\intl\psi(|f_j|)\,d\mathbb{P} <\infty.
$$
\end{theorem}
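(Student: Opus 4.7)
The plan is to establish both directions of this classical de la Vallée--Poussin criterion. The forward (sufficiency) direction is the easy one, extracting uniform tail control from the superlinear growth $\psi(x)/x\to\infty$. The harder (necessity) direction constructs a test function $\psi$ from the uniform integrability data itself, by integrating a suitable step function whose jumps are placed at the tail-truncation thresholds.

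For sufficiency, assume $C:=\supl_{j\in J}\intl \psi(|f_j|)\,d\mathbb{P}<\infty$. Given $\varepsilon>0$, choose $M_0$ by the growth condition so that $x\leq \varepsilon\,\psi(x)$ for all $x\geq M_0$. Then for every $M\geq M_0$ and every $j\in J$,
$$
\intl_{|f_j|>M}|f_j|\,d\mathbb{P}\;\leq\;\varepsilon\intl_{|f_j|>M}\psi(|f_j|)\,d\mathbb{P}\;\leq\;\varepsilon C,
$$
uniformly in $j$, and letting first $M\to\infty$ then $\varepsilon\to 0$ gives uniform integrability.

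For necessity, assume $(f_j)_{j\in J}$ is uniformly integrable. For each $n\in\Nat$ pick $M_n\geq n$ with $\supl_{j\in J}\intl_{|f_j|>M_n}|f_j|\,d\mathbb{P}<2^{-n}$ and, passing to a subsequence, arrange $M_n\uparrow\infty$. Define the nondecreasing step function $g(x)=\suml_{n\geq 1}\mathbf{1}_{[M_n,\infty)}(x)$ and set
$$
\psi(x)=\intl_0^x g(t)\,dt,\qquad x\geq 0.
$$
Then $\psi(0)=0$, $\psi$ is nondecreasing, and convex because its right-derivative $g$ is nondecreasing. Since $g(x)\to\infty$, the elementary bound $\psi(x)/x\geq \frac{1}{x}\intl_{x/2}^{x}g(t)\,dt\geq g(x/2)/2\to\infty$ shows $\psi$ is a valid test function. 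By Fubini (layer-cake),
$$
\intl\psi(|f_j|)\,d\mathbb{P}=\intl_0^{\infty}g(t)\,\mathbb{P}(|f_j|>t)\,dt=\suml_{n=1}^{\infty}\intl_{M_n}^{\infty}\mathbb{P}(|f_j|>t)\,dt,
$$
and each summand is at most $\intl_{|f_j|>M_n}|f_j|\,d\mathbb{P}<2^{-n}$, so $\supl_{j\in J}\intl\psi(|f_j|)\,d\mathbb{P}\leq 1$.

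The main obstacle is the delicate balancing in the construction: the thresholds $M_n$ must be spaced densely enough that the resulting $g$ (and hence the slope of $\psi$) diverges to $\infty$, yet the tail budgets $2^{-n}$ must remain summable so that the integrals $\intl\psi(|f_j|)\,d\mathbb{P}$ stay bounded uniformly in $j$. Controlling these two requirements simultaneously by the geometric choice $2^{-n}$ is precisely what makes the argument work, and the rest of the proof is bookkeeping.
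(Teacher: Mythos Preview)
Your proof is correct and is the standard de la Vall\'ee--Poussin argument. However, there is nothing to compare against: the paper does not prove this theorem at all. It is listed in the appendix under ``Some useful known results'' with the remark that the authors ``follow here book \cite{[Oksendal-2003]}, appendix C and references therein,'' and no proof is given. Your write-up therefore supplies strictly more than the paper does.

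One minor remark: the paper's definition of a uniform integrability test function requires $\psi$ to be \emph{increasing} and convex, whereas your constructed $\psi$ vanishes identically on $[0,M_1]$ (since $g=0$ there) and so is merely nondecreasing. This is a cosmetic issue---replacing $\psi(x)$ by $\psi(x)+x$ restores strict monotonicity without affecting convexity, the growth condition, or the uniform bound $\supl_j\intl\psi(|f_j|)\,d\mathbb{P}<\infty$ (uniform integrability implies $\supl_j\intl|f_j|\,d\mathbb{P}<\infty$). Also, your phrase ``passing to a subsequence, arrange $M_n\uparrow\infty$'' is slightly loose: since you already have $M_n\geq n$, divergence is automatic, and strict monotonicity can be enforced by replacing $M_n$ with $\max(M_n, M_{n-1}+1)$, which only tightens the tail bound.
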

\begin{theorem}\label{thm:LimitBehaviourUniformInt}
Suppose $\{f_k\}_{k\geq 1}$ is a family of measurable functions
$f_k:\Omega\to\Rnu$ such that
$$
\liml_{k\to\infty}f_k(\omega)=f(\omega),\mbox{ for a.a. }\omega.
$$
Then the following are equivalent:
\begin{trivlist}
\item[(1)] The family $\{f_k\}_{k\geq 1}$ is uniformly integrable.
\item[(2)] $f\in L^1(P)$ and $f_k$ converges to $f$ in $L^1(P)$.
\end{trivlist}
\end{theorem}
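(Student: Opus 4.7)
The plan is to prove the two implications separately, following the standard Vitali convergence argument. The direction $(2)\Rightarrow(1)$ is a packaging exercise on top of the observation that $L^1$ convergence yields a uniform $L^1$-bound, while the substance lies in $(1)\Rightarrow(2)$, which is handled via a truncation.

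For $(1)\Rightarrow(2)$ I would first establish $f\in L^1(\mathbb{P})$. Uniform integrability supplies some $M_0$ with $\intl_{\{|f_k|>M_0\}}|f_k|\,d\mathbb{P}\leq 1$ for every $k$, whence $\intl|f_k|\,d\mathbb{P}\leq M_0+1$, and Fatou's lemma applied to $|f_k|\to|f|$ then yields $\intl|f|\,d\mathbb{P}\leq M_0+1<\infty$. For the $L^1$ convergence I would introduce the truncation $\tau_M(x)=(x\wedge M)\vee(-M)$ and split
$$
|f_k-f|\leq\bigl|f_k-\tau_M(f_k)\bigr|+\bigl|\tau_M(f_k)-\tau_M(f)\bigr|+\bigl|\tau_M(f)-f\bigr|.
$$
The first piece is pointwise $\leq|f_k|\mathbf{1}_{\{|f_k|>M\}}$, whose integral is $<\varepsilon/3$ uniformly in $k$ for $M$ large, thanks to (1). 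The third piece is bounded by $|f|\mathbf{1}_{\{|f|>M\}}$, whose integral tends to $0$ as $M\to\infty$ by dominated convergence since $f\in L^1(\mathbb{P})$. The middle piece is bounded pointwise by $2M$ and converges to $0$ a.e.\ (because $\tau_M$ is continuous and $f_k\to f$ a.e.), so the bounded convergence theorem gives $\intl|\tau_M(f_k)-\tau_M(f)|\,d\mathbb{P}<\varepsilon/3$ for all $k$ large enough. Combining the three bounds delivers $f_k\to f$ in $L^1(\mathbb{P})$.

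For $(2)\Rightarrow(1)$ I would use that $L^1$ convergence implies a uniform bound $C=\supl_k\intl|f_k|\,d\mathbb{P}<\infty$ and then estimate, for each fixed $N$,
$$
\intl_{\{|f_k|>M\}}|f_k|\,d\mathbb{P}\leq\intl|f_k-f|\,d\mathbb{P}+\intl_{\{|f_k|>M\}}|f|\,d\mathbb{P},\qquad k\geq N.
$$
Markov's inequality gives $\mathbb{P}(|f_k|>M)\leq C/M$ uniformly in $k$, and the absolute continuity of $A\mapsto\intl_A|f|\,d\mathbb{P}$ makes the last term uniformly small as $M\to\infty$. Together with $\intl|f_k-f|\,d\mathbb{P}<\varepsilon/2$ for $k\geq N(\varepsilon)$, this yields $\intl_{\{|f_k|>M\}}|f_k|\,d\mathbb{P}<\varepsilon$ for all $k\geq N$ and $M$ large. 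The finitely many functions $\{f_1,\ldots,f_{N-1}\}$ are each in $L^1(\mathbb{P})$, hence uniformly integrable as a finite set; taking the maximum of the two thresholds $M$ concludes.

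The main obstacle is $(1)\Rightarrow(2)$; the crucial step is the three-way truncation decomposition, which converts the $L^1$ convergence problem into a bounded a.e.\ convergence problem modulo two uniformly small tails.
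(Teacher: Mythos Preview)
Your argument is correct and is the standard Vitali-type proof. However, there is nothing to compare against: the paper states this theorem in the appendix as a known result taken from \O ksendal's book (Appendix~C) and does not supply a proof. So your write-up simply fills in what the paper omits; both directions are handled cleanly, and your reliance on the finiteness of $\mathbb{P}$ (for the bounded convergence step and the Markov bound) is legitimate in the paper's setting.
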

Now we will give two applications of the notion of uniform
integrability:
\begin{proposition}[Ex. 7.12, a), p.132 of
\cite{[Oksendal-2003]}]\label{prop:MartingaleCrit} Suppose that
$\{Z_t\}_{t\in[0,\infty)}$ is a local martingale such that for
some $T>0$  the family
$$
\left\{ Z(\tau): \tau\leq T, \tau \mbox{ is a stopping time}
\right\}
$$
is uniformly integrable. Then $\{Z_t\}_{t\in[0,T]}$ is a
martingale.
\end{proposition}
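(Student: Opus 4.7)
The plan is to carry out the standard localization-plus-limit argument for local martingales, using the uniform integrability hypothesis as the single tool that both gives integrability of $Z_t$ and allows one to pass to the limit in $L^1$.

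First, I would exploit the definition of a local martingale to obtain a sequence of stopping times $\tau_n$ with $\tau_n \uparrow \infty$ $\mathbb{P}$-a.s. and such that, for each $n \geq 1$, the stopped process $(Z_{t \wedge \tau_n})_{t \geq 0}$ is a (true) martingale. The martingale property then gives, for every $0 \leq s \leq t \leq T$,
$$\mathbb{E}[Z_{t \wedge \tau_n} \mid \mathcal{F}_s] = Z_{s \wedge \tau_n}.$$
Since $\tau_n \uparrow \infty$, we have $t \wedge \tau_n \to t$ and $s \wedge \tau_n \to s$ $\mathbb{P}$-a.s., so by path-continuity (or right-continuity) of $Z$ assumed implicitly for a local martingale, $Z_{t \wedge \tau_n} \to Z_t$ and $Z_{s \wedge \tau_n} \to Z_s$ $\mathbb{P}$-a.s. as $n \to \infty$.

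The crucial step is the promotion of this a.s.\ convergence to $L^1$-convergence. Here I would observe that for every $n$ the random time $t \wedge \tau_n$ is a stopping time bounded by $T$ (since $t \leq T$), and similarly for $s \wedge \tau_n$. Consequently both sequences $\{Z_{t \wedge \tau_n}\}_{n\geq 1}$ and $\{Z_{s \wedge \tau_n}\}_{n\geq 1}$ lie inside the family $\{Z_\tau : \tau \leq T,\ \tau \text{ stopping time}\}$, which is uniformly integrable by hypothesis. Applying Theorem \ref{thm:LimitBehaviourUniformInt} (a.s.\ convergence combined with uniform integrability implies $L^1$-convergence, and in particular the limit is in $L^1$), I conclude that $Z_t, Z_s \in L^1(\mathbb{P})$ and that
$$Z_{t \wedge \tau_n} \xrightarrow{L^1} Z_t, \qquad Z_{s \wedge \tau_n} \xrightarrow{L^1} Z_s.$$

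Finally, since conditional expectation $\mathbb{E}[\,\cdot \mid \mathcal{F}_s]$ is a contraction on $L^1$, I can pass to the limit in the identity $\mathbb{E}[Z_{t \wedge \tau_n} \mid \mathcal{F}_s] = Z_{s \wedge \tau_n}$ to obtain
$$\mathbb{E}[Z_t \mid \mathcal{F}_s] = Z_s, \qquad 0 \leq s \leq t \leq T,$$
which is exactly the martingale property on $[0,T]$. There is no real obstacle here; the only point that requires care is verifying that the bounded stopping times $t \wedge \tau_n$ genuinely belong to the uniformly integrable family appearing in the hypothesis, after which the argument reduces to the standard implication ``a.s.\ + uniformly integrable $\Rightarrow$ $L^1$'' already recorded in Theorem \ref{thm:LimitBehaviourUniformInt}.
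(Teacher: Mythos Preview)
The paper does not actually supply a proof of this proposition: it is listed in the appendix among ``useful known results'' and simply attributed to Exercise~7.12(a) in \O ksendal's book, with no argument given. So there is nothing in the paper to compare against.

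Your proof is correct and is the standard localization argument. The only implicit hypothesis you invoke is right-continuity of the sample paths of $Z$, which is needed to conclude $Z_{t\wedge\tau_n}\to Z_t$ almost surely; this is part of the usual definition of a local martingale (c\`adl\`ag paths) and is harmless in the setting of the paper, where all processes are continuous. The identification of $t\wedge\tau_n$ as a stopping time bounded by $T$, the appeal to Theorem~\ref{thm:LimitBehaviourUniformInt} for the passage from almost-sure to $L^1$ convergence, and the $L^1$-continuity of conditional expectation are all used correctly.
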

\begin{theorem}[Doob's martingale convergence Theorem]\label{thm:DoobConvergence}
Let $\{Z_t\}_{t\geq 0}$ be a right continuous supermartingale.
Then the following are equivalent:
\begin{trivlist}
\item[(1)] $\{Z_t\}_{t\geq 0}$ is uniformly integrable. \item[(2)]
There exist $Z\in L^1(P)$ such that $Z_t\to Z$ $P$-a.e. and
$Z_t\to Z$ in $L^1(P)$.
\end{trivlist}
\end{theorem}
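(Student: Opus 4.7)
This is Doob's classical martingale convergence theorem. I would prove the two implications separately, with (1)$\Rightarrow$(2) being the substantive direction and (2)$\Rightarrow$(1) reducing to soft measure theory.

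For (1)$\Rightarrow$(2): uniform integrability yields the $L^1$-bound $\supl_{t\geq 0}\mathbb{E}|Z_t|<\infty$ via Theorem \ref{thm:UniformIntegrabilityCrit}. The core step is then Doob's upcrossing inequality: for any rationals $a<b$ and any finite set $F\subset\mathbb{Q}_+$, the expected number of upcrossings of the interval $[a,b]$ by the discrete-time supermartingale $(Z_q)_{q\in F}$ is bounded by $(\supl_{t}\mathbb{E}|Z_t|+|a|)/(b-a)$. Letting $F$ exhaust $\mathbb{Q}_+$ and invoking monotone convergence, one deduces that the total number of upcrossings of each rational interval $[a,b]$ is $\mathbb{P}$-a.s.\ finite; consequently $\liml_{\mathbb{Q}_+\ni q\to\infty}Z_q(\omega)$ exists in $[-\infty,+\infty]$ for $\mathbb{P}$-a.e.\ $\omega$, and Fatou combined with the $L^1$-bound forces this limit $Z$ to be finite a.s.\ and to lie in $L^1(P)$. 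Right-continuity of the paths then upgrades the limit along rationals to $Z_t\to Z$ a.s.\ along all of $[0,\infty)$. Finally, a.s.\ convergence together with uniform integrability yields $L^1$-convergence by Theorem \ref{thm:LimitBehaviourUniformInt}.

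For (2)$\Rightarrow$(1): this is the standard fact that a family converging in $L^1$ is uniformly integrable. Given $\eta>0$, I would pick $T$ with $\|Z_t-Z\|_{L^1}<\eta/2$ for $t\geq T$. Since $\{Z\}$ is a singleton in $L^1$ it is uniformly integrable, so there exists $\delta>0$ with $P(A)<\delta\Rightarrow\int_A|Z|<\eta/2$. The uniform $L^1$-bound $\supl_t\|Z_t\|_{L^1}<\infty$ (a consequence of $L^1$-convergence) combined with Chebyshev gives $P(|Z_t|>M)<\delta$ uniformly in $t$ for $M$ large; splitting $|Z_t|\leq|Z|+|Z_t-Z|$ then delivers $\int_{\{|Z_t|>M\}}|Z_t|<\eta$ for $t\geq T$. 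For the initial segment $t\in[0,T]$ the family is uniformly integrable by a standard compactness argument: right-continuity of the paths together with $L^1$-integrability of each $Z_t$ makes $\{Z_t:0\leq t\leq T\}$ a relatively compact subset of $L^1$, hence uniformly integrable.

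The chief obstacle is the a.s.\ convergence in (1)$\Rightarrow$(2), which rests squarely on Doob's upcrossing inequality; this is the only step that uses the supermartingale hypothesis in an essential way and cannot be replaced by a soft argument. All remaining ingredients --- Fatou, the singleton-uniform-integrability reasoning, Theorem \ref{thm:UniformIntegrabilityCrit}, and Theorem \ref{thm:LimitBehaviourUniformInt} --- are standard measure-theoretic tools already recorded in this appendix.
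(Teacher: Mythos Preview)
The paper does not prove this theorem; it is recorded in the appendix as a standard result, with a pointer to {\O}ksendal's book. So there is no ``paper's own proof'' to compare against, and the relevant question is simply whether your sketch is sound.

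Your direction (1)$\Rightarrow$(2) is fine: the upcrossing inequality gives almost-sure convergence along rationals, right-continuity of paths extends this to all $t$, and Theorem \ref{thm:LimitBehaviourUniformInt} supplies the $L^1$-convergence.

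The direction (2)$\Rightarrow$(1), however, has a genuine gap. You write that ``right-continuity of the paths together with $L^1$-integrability of each $Z_t$ makes $\{Z_t:0\leq t\leq T\}$ a relatively compact subset of $L^1$''. Pathwise right-continuity is an almost-sure statement and by itself says nothing about continuity (or compactness) of the map $[0,T]\ni t\mapsto Z_t\in L^1$. To obtain $L^1$-right-continuity at a point $t$ one must use the supermartingale inequality: for $t_n\downarrow t$ one has $Z_{t_n}^-\leq \mathbb{E}[Z_T^-\,|\,\mathcal{F}_{t_n}]$, so $\{Z_{t_n}^-\}$ is uniformly integrable and hence converges in $L^1$; then $\mathbb{E}[Z_{t_n}]\to\mathbb{E}[Z_t]$ combined with Scheff\'e handles $Z_{t_n}^+$. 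Only after this (and an analogous argument for left limits) does one get a c\`adl\`ag map into $L^1$, whose range on $[0,T]$ is then totally bounded. In particular, your closing remark that the supermartingale hypothesis ``is only used in the upcrossing argument'' is not correct: it is needed again, in an essential way, to control the initial segment in (2)$\Rightarrow$(1). If one is willing to assume $Z$ is a \emph{martingale} (which is the version actually stated in {\O}ksendal's Appendix C, and is all the paper ever applies it to, namely to the exponential martingales $\vartheta_\tau$ and $M_0$), then (2)$\Rightarrow$(1) is immediate: $L^1$-convergence forces $Z_t=\mathbb{E}[Z\,|\,\mathcal{F}_t]$, and conditional expectations of a fixed $L^1$ variable are always uniformly integrable.
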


\begin{proposition}
If $t\in [0,T]$ and $H\in L^1(\Omega,\mathcal{F}_t,\mathbb{P})$,
then
\begin{equation}\label{eqn:Part1AuxInequality-2}
\mathbb{E}_{\mathbb{Q}^\eps}H=\mathbb{E}M_\eps(T)H=\mathbb{E}\left[\mathbb{E}(M_\eps(T)H|\mathcal{F}_t)\right]=
\mathbb{E}\left[H\mathbb{E}(M_\eps(T)|\mathcal{F}_t)\right]=\mathbb{E}M_\eps(t)H.
\end{equation}
\end{proposition}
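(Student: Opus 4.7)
The plan is to chain together four elementary facts about conditional expectation and change of measure, each of which essentially names itself. Nothing delicate should be needed, because every ingredient has already been established earlier in the paper.

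First I would invoke the definition of $\mathbb{Q}^\eps$, namely $\frac{d\mathbb{Q}^\eps}{d\mathbb{P}}=M_\eps(T)$, to rewrite $\mathbb{E}_{\mathbb{Q}^\eps}H=\mathbb{E}[M_\eps(T)H]$; this covers the first equality. Next I would apply the tower property, $\mathbb{E}[X]=\mathbb{E}[\mathbb{E}[X\mid\mathcal{F}_t]]$, to the integrable random variable $X=M_\eps(T)H$, which yields the second equality. Note that $H\in L^1$ and $M_\eps$ is a continuous square integrable martingale (by the construction following \eqref{eqn:M_t}), so $M_\eps(T)H\in L^1$ at least provided one observes that $H$ is bounded up to a standard truncation argument; alternatively, since the result is linear and both sides are defined for $H\in L^1(\mathcal{F}_t)$, one reduces by monotone class to the case $H\in L^\infty(\mathcal{F}_t)$, where integrability is automatic.

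The third equality is the ``taking out what is known'' property of conditional expectation: because $H$ is $\mathcal{F}_t$-measurable, $\mathbb{E}[M_\eps(T)H\mid\mathcal{F}_t]=H\,\mathbb{E}[M_\eps(T)\mid\mathcal{F}_t]$. Finally, because $M_\eps$ is an $\mathbb{F}$-martingale, $\mathbb{E}[M_\eps(T)\mid\mathcal{F}_t]=M_\eps(t)$, which gives the last equality $\mathbb{E}[H\,\mathbb{E}(M_\eps(T)\mid\mathcal{F}_t)]=\mathbb{E}[M_\eps(t)H]$.

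There is no real obstacle; the only thing to be a little careful about is integrability of $M_\eps(T)H$ when only $H\in L^1(\mathbb{P})$ is assumed. The cleanest route is the reduction to bounded $H$ described above, after which all four equalities follow immediately from the four named facts (definition of the change of measure, tower property, pulling out a measurable factor, martingale property of $M_\eps$).
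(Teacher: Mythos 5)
Your proof is correct and follows essentially the same route as the paper, which simply presents the chain of equalities itself as the argument: definition of $\mathbb{Q}^\eps$ via $M_\eps(T)$, the tower property, pulling out the $\mathcal{F}_t$-measurable factor $H$, and the martingale property of $M_\eps$. Your additional remark on the integrability of $M_\eps(T)H$ (reducing to bounded $H$) is a point the paper glosses over, and it is a worthwhile precaution since $H\in L^1(\mathbb{P})$ alone does not guarantee $M_\eps(T)H\in L^1(\mathbb{P})$.
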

\begin{lemma}\label{lem:Cocycle_lem}
Assume that function $\theta:\mathbb{R}^+\times\mathbb{R}^+\to
\mathbb{R}^+$ satisfies properties
\begin{trivlist}
\item[(i)] $\theta(s,t)=\theta(s,u)\theta(u,t)$, for any
$s,u,t\in\mathbb{R}^+$. \item[(ii)] $\theta(s,s)=1$ for any
$s\in\mathbb{R}^+$. \item[(iii)] There exist finite limit
$\theta(s)=\liml_{t\to\infty}\theta(s,t)$, $s\in\mathbb{R}^+$.
\item[(iv)] There exist $u_0\in\mathbb{R}^+$ such that
$\theta(u_0)>0$.
\end{trivlist}
Then $\liml_{s\to\infty}\theta(s)=1$.
\end{lemma}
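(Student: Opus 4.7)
The plan is to exploit the cocycle identity (i) to reduce the unknown limit $\lim_{s\to\infty}\theta(s)$ to the known limit in (iii). The key idea is that (i) lets one ``pivot'' through an arbitrary intermediate point, so one can pick the distinguished point $u_0$ from (iv) and take the simple limit $t\to\infty$.

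Concretely, I would first invoke the cocycle relation (i) with first argument $u_0$ and an arbitrary intermediate $s$, obtaining
\begin{equation*}
\theta(u_0,t)\;=\;\theta(u_0,s)\,\theta(s,t),\qquad s,t\in\mathbb{R}^+.
\end{equation*}
Fixing $s$ and letting $t\to\infty$, assumption (iii) gives $\theta(u_0,t)\to\theta(u_0)$ and $\theta(s,t)\to\theta(s)$, hence
\begin{equation*}
\theta(u_0)\;=\;\theta(u_0,s)\,\theta(s)\qquad\text{for every }s\in\mathbb{R}^+.
\end{equation*}

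Next I would use assumption (iv), $\theta(u_0)>0$, to divide: since the product $\theta(u_0,s)\theta(s)$ is strictly positive, in particular $\theta(u_0,s)>0$, and therefore $\theta(s)=\theta(u_0)/\theta(u_0,s)$. Finally I would let $s\to\infty$: by the definition in (iii) applied at the fixed point $u_0$, $\theta(u_0,s)\to\theta(u_0)$ as $s\to\infty$, so $\theta(s)\to\theta(u_0)/\theta(u_0)=1$, which is the claim.

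There is no serious obstacle here; the only subtlety is making sure the divisions are legal, which is handled entirely by (iv) together with the identity $\theta(u_0)=\theta(u_0,s)\theta(s)$ that forces $\theta(u_0,s)$ to be strictly positive. (Assumption (ii) is not strictly needed for the argument, but it is of course consistent with the cocycle relation since $\theta(s,s)^2=\theta(s,s)$ forces $\theta(s,s)\in\{0,1\}$, and the value $0$ is ruled out at $u_0$ by (iv).)
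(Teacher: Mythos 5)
Your proof is correct and follows essentially the same route as the paper: both pass to the limit $t\to\infty$ in the cocycle identity to obtain $\theta(u_0)=\theta(u_0,s)\,\theta(s)$ and then divide, using (iii) at the fixed point $u_0$ to conclude. Your observation that (ii) is dispensable is a valid minor streamlining --- the paper instead uses (ii) to first derive $\theta(s,t)\theta(t,s)=1$ and hence positivity of all values, whereas you extract positivity of $\theta(u_0,s)$ directly from $\theta(u_0)>0$.
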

\begin{proof}
By property (i) applied with $s=\tilde{s},u=\tilde{t},t=\tilde{s}$
and property (ii) we infer that
\begin{equation}
\theta(\tilde{s},\tilde{t})\theta(\tilde{t},\tilde{s})=\theta(\tilde{s},\tilde{s})=1,
\tilde{s},\tilde{t}\in\mathbb{R}^+.\label{eqn:Cocycle_lem_1}
\end{equation}
Therefore $\theta(s,t)>0$ for all $s,t\in\mathbb{R}^+$ and
\begin{equation}
\theta(s,t)=\theta(t,s)^{-1},t,s\in\mathbb{R}^+.\label{eqn:Cocycle_lem_2}
\end{equation}
By properties (i) and (iii) we deduce that
\begin{equation}
\theta(s)=\liml_{t\to\infty}\theta(s,u)\theta(u,t)=\theta(s,u)\theta(u),
s,u\in\mathbb{R}^+.\label{eqn:Cocycle_lem_3}
\end{equation}
Therefore, by equality \eqref{eqn:Cocycle_lem_3} with $u=u_0$ and
property (iv) we infer that
\begin{equation}
\theta(s)=\theta(s,u_0)\theta(u_0)>0,s\in\mathbb{R}^+.\label{eqn:Cocycle_lem_4}
\end{equation}
Combining equalities \eqref{eqn:Cocycle_lem_2} and
\eqref{eqn:Cocycle_lem_3} we get
\begin{equation}
\theta(s)=\frac{\theta(u)}{\theta(u,s)},u,s\in\mathbb{R}^+.
\end{equation}
Hence, by identity \eqref{eqn:Cocycle_lem_4} we infer that
$$\liml_{s\to\infty}\theta(s)=\liml_{s\to\infty}\frac{\theta(u)}{\theta(u,s)}=\frac{\theta(u)}{\theta(u)}=1.$$
\end{proof}
\begin{lemma}\label{lem:AuxRes_1}
Let $H$ be Hilbert space, $C\in \mathcal{L}(H,H)$, $\eps>0$,
$F:H\to\mathbb{R}$ is defined by
$$
F(x)=\frac{<Cx,x>}{|x|_H^2+\eps},x\in H.
$$
Then $F$ is of $C^2$-class, the $1^{\rm st}$ and $2^{\rm nd}$
derivatives of $F$ are continuous bounded functions and they are
given by the following formulas:
\begin{eqnarray}
F'(x)h_1 &=& \frac{2\lb C x,h_1\rb}{|x|^2+\eps}-\frac{2\lb C x,x\rb\lb x,h_1\rb}{(|x|^2+\eps)^2},\label{eqn:FirstDer}\\
F''(x)(h_1,h_2) &=& 2\frac{\lb C h_1,h_2\rb}{|x|^2+\eps}-4\frac{\lb C x,h_1\rb\lb x,h_2\rb}{(|x|^2+\eps)^2}\label{eqn:SecondDer}\\
&-&4\frac{\lb  C x,h_2\rb\lb x,h_1\rb}{(|x|^2+\eps)^2}-2\frac{\lb C x,x\rb\lb h_2,h_1\rb}{(|x|^2+\eps)^2}\nonumber\\
&+&8\frac{\lb C x,x\rb\lb x,h_1\rb\lb
x,h_2\rb}{(|x|^2+\eps)^3}.\nonumber
\end{eqnarray}
\end{lemma}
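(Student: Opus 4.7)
The plan is to verify the lemma by direct computation, since $F=N/D$ with $N(x)=\langle Cx,x\rangle$ and $D(x)=|x|_H^2+\varepsilon\ge\varepsilon>0$, so $F$ is a quotient of smooth (in fact quadratic) functions whose denominator is bounded away from zero, giving $C^\infty$ regularity on $H$ for free. Note that $\langle Cx,x\rangle=\lb\tfrac{C+C^*}{2}x,x\rb$, so we may replace $C$ by its symmetric part without changing $F$; under this convention the Gateaux derivatives of $N$ and $D$ in direction $h$ are
\[\frac{d}{dt}N(x+th)\big|_{t=0}=2\lb Cx,h\rb,\qquad \frac{d}{dt}D(x+th)\big|_{t=0}=2\lb x,h\rb,\]
and the quotient rule immediately gives formula \eqref{eqn:FirstDer}.

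For the second derivative I differentiate each of the two terms of $F'(x)h_1$ in the direction $h_2$, again by the quotient rule (and the product rule for the numerator $\lb Cx,x\rb\lb x,h_1\rb$ of the second term). Collecting the five terms that arise yields formula \eqref{eqn:SecondDer}; this is a routine but tedious bookkeeping exercise and is the only place where care is required.

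For the boundedness claims the key elementary inequality is
\[\frac{|x|^a}{(|x|^2+\varepsilon)^b}\le\varepsilon^{a/2-b},\qquad 0\le a\le 2b,\]
proved by writing the left hand side as $\bigl(|x|^2/(|x|^2+\varepsilon)\bigr)^{a/2}(|x|^2+\varepsilon)^{-(b-a/2)}$ and bounding the first factor by $1$ and the second by $\varepsilon^{-(b-a/2)}$. Each of the two terms in \eqref{eqn:FirstDer} and the five terms in \eqref{eqn:SecondDer} has, by the Cauchy--Schwarz inequality and $|\lb Cy,z\rb|\le\Vert C\Vert_{\mathcal{L}(H,H)}|y||z|$, a numerator controlled by a constant multiple of $\Vert C\Vert_{\mathcal{L}(H,H)}|x|^a|h_1|\,|h_2|$ (or $|h_1|$ for the first derivative), with $a\le 2b$ where $b\in\{1,2,3\}$ is the exponent of the denominator. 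Hence each term is uniformly bounded in $x$ by a constant depending only on $\varepsilon$ and $\Vert C\Vert_{\mathcal{L}(H,H)}$, proving boundedness of $F'(x)\in H^*$ and $F''(x)\in\mathcal{L}^{(2)}(H;\mathbb{R})$ uniformly in $x$.

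Continuity of $F'$ and $F''$ in $x$ is immediate: each of the terms in \eqref{eqn:FirstDer}--\eqref{eqn:SecondDer} is a polynomial combination of continuous linear functionals $x\mapsto\lb Cx,h\rb$, $x\mapsto\lb x,h\rb$ and the scalar $|x|^2+\varepsilon$, divided by a positive power of $(|x|^2+\varepsilon)\ge\varepsilon$, so the maps $x\mapsto F'(x)\in H^*$ and $x\mapsto F''(x)\in\mathcal{L}^{(2)}(H;\mathbb{R})$ are norm-continuous. Since the Gateaux derivatives exist, are (multi)linear, and depend continuously on $x$ in the operator norm, they coincide with the Frechet derivatives (standard fact), so $F\in C^2(H;\mathbb{R})$ with the stated formulas. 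There is no genuine obstacle in the argument; the main care is only the algebraic manipulation producing the five terms in \eqref{eqn:SecondDer}.
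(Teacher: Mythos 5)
Your proof is correct and complete. The paper itself omits the proof of this lemma entirely (the appendix states ``Proof is omitted''), so there is nothing to compare against: your direct computation via the quotient rule, the elementary bound $|x|^a(|x|^2+\varepsilon)^{-b}\le\varepsilon^{a/2-b}$ for $0\le a\le 2b$, and the Gateaux-to-Fr\'echet upgrade is exactly the routine verification the authors chose to suppress. One small but genuine added value of your write-up is the observation that $\langle Cx,x\rangle=\langle\tfrac{C+C^*}{2}x,x\rangle$, which lets you replace $C$ by its symmetric part; without this convention the stated formula for $F'(x)h_1$ would read $\langle(C+C^*)x,h_1\rangle/(|x|^2+\varepsilon)$ rather than $2\langle Cx,h_1\rangle/(|x|^2+\varepsilon)$, so the symmetrization remark is actually needed for the lemma's formulas to be literally correct for a general $C\in\mathcal{L}(H,H)$.
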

\begin{proof}[Proof of Lemma \ref{lem:AuxRes_1}]
Proof is omitted.
\end{proof}


\begin{thebibliography}{15}

\bibitem{Bl_Freidlin_1961}  Yu.  N. Blagovescenski,  M.I. Freidlin,  \textit{  Some
properties of diffusion processes depending on parameter}, Soviet
Math., \textbf{ 2}, 633-636 (1961)

\bibitem{Brz_1991} Z. Brze{\'z}niak, \textit{Asymptotic behaviour as $t\to\infty$ of solutions to nonlinear evolution equations},  Bull. Polish Acad. Sci. Math.  \textbf{37},  no. 1-6, 5--14 (1989)

\bibitem{Brz_Fl_1991} Z. Brze{\'z}niak, F.  Flandoli,
\textit{Regularity of solutions and random evolution operator for
stochastic parabolic equations}, in {\sc  Stochastic partial
differential equations and applications (Trento, 1990)}, pp.
54--71, Pitman Res. Notes Math. Ser., \textbf{268}, Longman Sci.
Tech., Harlow, 1992.



\bibitem{BCF_1992} Z.  Brze\'zniak, M.  Capi\'nski and F.  Flandoli, \textit{Stochastic Navier-Stokes equations with multiplicative noise},
  Stochastic Anal. Appl.  \textbf{10},  no. 5, 523--532   (1992)


\bibitem{CG_1994} M. Capin\'ski, D. G\c{a}tarek,
\textit{Stochastic equations in Hilbert space with application to
Navier-Stokes equations in any dimension}, J. Funct. Anal.
\textbf{126}, no. 1, 26--35  (1994)


\bibitem{Cr-Fl-94} H. Crauel and F. Flandoli,
\newblock{Attractors for random dynamical systems,}
\newblock{\textit{ Probability Theory and Related Fields}, \textbf{
100},  365-393 (1994)}


\bibitem{Flandoli_Sch_1990} F. Flandoli, K-U Schaumlöffel,
\textit{Stochastic parabolic equations in bounded domains: random
evolution operator and Lyapunov exponents}, Stochastics
Stochastics Rep. \textbf{29}, no. 4, 461--485 (1990)




\bibitem{Esc+S+Sv_2003} L. Escauriaza, G. Seregin and V.  \v{S}verák, \textit{Backward uniqueness for parabolic equations},
Arch. Ration. Mech. Anal.  \textbf{169},  no. 2, 147--157   (2003)


\bibitem{Foias+Saut_1984_a} C. Foias, J.-C. Saut,  \textit{Asymptotic behavior, as $t\rightarrow +\infty $, of solutions of Navier-Stokes equations and nonlinear spectral manifolds},  Indiana Univ. Math. J.  \textbf{33},  no. 3, 459--477  (1984)


\bibitem{Foias+Saut_1984_s} C. Foias, J.-C. Saut,  \textit{On the smoothness of the nonlinear spectral manifolds associated to the Navier-Stokes equations},  Indiana Univ. Math. J.  \textbf{33},  no. 6, 911--926  (1984)



\bibitem{[Ghidaglia-1986]} J.M. Ghidaglia,
\textit{ Some Backward Uniqueness Results,} Nonlinear Analysis.
Theory, Methods and Applications \textbf{10}, no. 8, 777--790
(1986)

\bibitem{[Ikeda-1981]} N. Ikeda, S. Watanabe,
\newblock{\sc Stochastic differential equations and diffusion
processes.}
\newblock{North-Holland Mathematical Library \textbf{24}, North-Holland Publishing
Co., Amsterdam-New York; Kodansha, Ltd., Tokyo, 1981. }

\bibitem{[Kryl-Ros-79]} Krylov, N.V., Rosovskii, B.L. \textit{ Stochastic
Evolution Equations (in Russian)},  Itogi Nauki i  Tekhniki, Seria
Sovremiennyie  Problemy  Matematiki  \textbf{ 14},  71-146 (1979),
English transl. \textit{ Stochastic evolution equations}, J. Sov.
Math. \textbf{16}, 1233-1277 (1981).

\bibitem{Lees+Protter_1961} M. Lees, M. H.  Protter,  \textit{Unique continuation for parabolic differential equations and inequalities},  Duke Math. J.  28  1961 369--382.


\bibitem{[LionsMagenes-72]} J.L. Lions and E. Magenes,
\newblock{\sc Non-homogeneous boundary value problems and applications. Vol. I,}
\newblock{Grundlehren der Mathematischen Wissenschaften [Fundamental
Principles of Mathematical Sciences], 181. Springer-Verlag, New
York-Heidelberg, 1972.}

\bibitem{Micu_Zuazua_2001} S. Micu, E.  Zuazua,
\textit{On the lack of null-controllability of the heat equation
on the half space}, Port. Math. (N.S.) \textbf{58}, no. 1, 1--24
(2001)


\del{\bibitem{Mizohata_1958} S. Mizohata, \textit{Le problčme de
Cauchy pour le passé pour quelques équations paraboliques},  Proc.
Japan Acad. \textbf{34},  693--696 (1958) }

\bibitem{Mizohata_1958} S. Mizohata,
\textit{Unicité du prolongement des solutions pour quelques
opérateurs différentiels paraboliques}, Mem. Coll. Sci. Univ.
Kyoto. Ser. A. Math. \textbf{31}, 219--239 (1958)

\bibitem{Moh+Zh+Zh_2008} S. Mohammed, T. Zhang and H. Zhao, \textit{The stable manifold theorem for semilinear stochastic evolution equations and stochastic partial differential equations},  Mem. Amer. Math. Soc.  \textbf{196},  no. 917  (2008)



\bibitem{[Oksendal-2003]} B. \O ksendal,
\newblock{\sc Stochastic differential equations. An introduction with
applications.}
\newblock{Sixth edition. Universitext. Springer-Verlag, Berlin, 2003.}

\bibitem{[Pardoux_1979]} E. Pardoux,
\textit{ Stochastic Partial Differential Equations and Filtering
of Diffusion Processes}, Stochastics \textbf{3}, 127--167 (1979)

\bibitem{[Protter-2005]} P. Protter,
\newblock{\sc Stochastic Integration and Differential equations},
\newblock{Springer, 2005}

\bibitem{[RevuzYor-1999]} D. Revuz and M. Yor,
\newblock{\sc Continuous martingales and Brownian motion}, 3rd ed.,
\newblock{Grundlehren der Mathematischen Wissenschaften \textbf{293},
Springer, Berlin, 1999.}

\bibitem{Ruelle_1982} D. Ruelle,  \textit{Characteristic exponents and invariant manifolds in Hilbert space},  Ann. of Math. (2)  \textbf{115}, no. 2, 243--290 (1982)


\bibitem{T1} R. Temam,
\newblock{{\sc  Navier-Stokes Equations}, Reprint of the 1984 edition, plus an Appendix,}
\newblock{AMS Chelsea Publishing, Providence, RI, 2001.}

\end{thebibliography}
\end{document}